\theoremstyle{plain}
\newtheorem{thm}{Theorem}[section]
\newtheorem{cor}{Corollary}[section]
\newtheorem{lem}{Lemma}[section]
\theoremstyle{definition}
\newtheorem{defn}{Definition}[section]
\theoremstyle{remark}
\newtheorem{rem}{Remark}[section]
\title{Equivariant local scaling asymptotics\\ for smoothed T\"{o}plitz
spectral projectors}
\author{Roberto Paoletti\footnote{\noindent{\bf Address:}
Dipartimento di Matematica e Applicazioni, Universit\`a degli Studi
di Milano Bicocca, Via R. Cozzi 53, 20125 Milano,
Italy; {\bf e-mail}: roberto.paoletti@unimib.it }}
\date{}
\begin{document}

\maketitle
\begin{abstract}
Let $X$ be the unit circle bundle of a positive line bundle on a Hodge
manifold.
We study the local scaling asymptotics of the smoothed spectral projectors associated to a
first order
elliptic T\"{o}plitz operator $T$ on $X$, 
possibly in the presence of Hamiltonian symmetries. The resulting expansion is then used to give a local 
derivation of an equivariant Weyl law. It is not required
that $T$ be invariant under the structure circle action, that is, $T$ needn't be a Berezin-T\"{o}plitz operator.

\end{abstract}
 
\section{Introduction}

Let $(M,J,\omega)$ be a compact complex $d$-dimensional Hodge manifold, and let 
$(A,h)$ be a positive holomorphic line bundle on $M$, such that the unique compatible
covariant derivative on $A$ has curvature $\Theta=-2i\,\omega$. Let $X\subseteq A^\vee$
be the unit circle bundle in the dual line bundle, with the induced connection 1-form 
$\alpha$. Thus $(X,\alpha)$ is a contact manifold and $\mathrm{d}\alpha=2\,\pi^*(\omega)$,
where $\pi:X\rightarrow M$
is the bundle projection.
We shall consider on $M$ and $X$ the volume forms
$$
\mathrm{d}V_M=:\frac{1}{d!}\,\omega^{\wedge d}\,\,\,\,\,
\mathrm{d}V_X=:\frac{1}{2\pi}\,\alpha\wedge \pi^*(\mathrm{d}V_M),
$$
and the associated densities $|\mathrm{d}V_M|$, $|\mathrm{d}V_X|$.

With these choices, one may consider the Hilbert spaces $L^2\left(M,A^{\otimes k}\right)$
of square summable sections of powers of $A$, and the Hilbert space $L^2(X)$ of square summable
complex functions on $X$. The structure circle action on $X$ determines an equivariant splitting into
isotypes 
$$
L^2(X)\cong \bigoplus _k L^2(X)_k.
$$
As is well-known, there is for every $k\in \mathbb{N}$ a natural unitary isomorphism 
$$
L^2(X)_k\cong L^2\left(M,A^{\otimes k}\right),
$$
under which the Hilbert direct sum of the spaces of holomorphic sections 
$$
H^0\left(M,A^{\otimes k}\right)\subseteq L^2\left(M,A^{\otimes k}\right),
\,\,\,\,\,\,\,(k=0,1,2,\ldots),
$$
corresponds to the Hardy space $H(X)\subseteq L^2(X)$ (see \cite{bdm-g}, \cite{bsz} and \cite{sz} for a detailed discussion).
The orthogonal projector $\Pi:L^2(X)\rightarrow H(X)$ is known in the literature as the Szeg\"{o} projector, and its
distributional kernel as the Szeg\"{o} kernel, of $X$.

A T\"{o}plitz operator of degree $k$ (in the sense of \cite{bdm-g}) is a composition
$
T=\Pi\circ Q\circ \Pi
$, 
where $Q$ is a pseudo-differential operator of degree $k$ on $X$, regarded as an endomorphism of $H(X)$. 
By the theory in \cite{bdm-g}, we may assume without loss that $[\Pi,Q]=0$, so that
$T$ is the restriction of $Q$ to $H(X)$.

A T\"{o}plitz operator $T$ has a well-defined principal symbol $\sigma_T$, given by the restriction of the principal
symbol of $Q$ to the closed symplectic cone sprayed by $\alpha$:
$$
\Sigma=:\big\{(x,r\,\alpha_x)\,:\,x\in X,\,r>0\big\}\subseteq TX\setminus \{0\};
$$
thus $\sigma_T:\Sigma\rightarrow \mathbb{C}$ is independent of the particular choice of $Q$ \cite{bdm-g}.
One calls $T$ \textit{elliptic} if $\sigma_T$ is the restriction of an elliptic symbol; if $T$ is elliptic,
one may assume in addition to the above that $Q$ is also elliptic.

We shall say that $T$ is self-adjoint to mean that it is formally self-adjoint with respect to the $L^2$-product
on $X$ associated to $\mathrm{d}V_X$. In this case, $Q$ itself may be assumed to be self-adjoint, and
$\sigma_T$ is real-valued. 

For instance, given $h\in \mathcal{C}^\infty(X)$, one obtains a zeroth-order self-adjoint 
T\"{o}plitz operator $T(h)$ by taking $Q=M_h$, the multiplication operator by $h$.
Then $\varsigma_{T(h)}=h$, so that $T(h)$ is elliptic precisely when $h$ is nowhere vanishing.
Clearly, $T(h)$ is self-adjoint if and only if 
$h$ is real-valued.

\begin{defn}
Let $T$ be a T\"{o}plitz operator as above. The \textit{reduced symbol} of $T$ is the function
$\varsigma_T:X\rightarrow \mathbb{C}$ defined by
$$
\varsigma_T(x)=:\sigma_T(x,\alpha_x)\,\,\,\,\,\,\,\,\,(x\in X).
$$
\end{defn}

So let $T$ be a first order self-adjoint first order T\"{o}plitz operator on $X$ with positive reduced symbol
$\varsigma_T>0$. Let $\lambda_1\le \lambda_2\le\cdots$ be the eigenvalues of $T$, repeated according to multiplicity, so that
$\lambda_j\uparrow +\infty$. Let $(e_j)$ be any complete orthonormal system of $H(X)$, with $e_j$ eigensection 
associated to $\lambda_j$. 
Thus, for any eigenvalue $\eta\in \mathbb{R}$ the $\mathcal{C}^\infty$ function
on $X\times X$ given by
$$
P_\eta(x,y)=:\sum _{j:\lambda_j=\eta}e_j(x)\cdot \overline{e_j(y)}\,\,\,\,\,\,\,\,\,\,\,(x,y\in X)
$$
is the Schwartz kernel of the $L^2$-orthogonal projector onto the eigenspace $H(X,T)_\eta\subseteq H(X)$ 
of $T$ associated to $\eta$.
If $I\subseteq \mathbb{R}$ is any bounded interval, we may view it as a spectral band and
similarly consider the corresponding spectral projector
$
P_I=:\sum _{\eta\in I}P_\eta$;
this is a smoothing operator, with kernel 
$$
P_I(x,y)=:\sum _{j:\lambda_j\in I}e_j(x)\cdot \overline{e_j(y)}\,\,\,\,\,\,\,\,\,\,\,(x,y\in X).
$$
If $H(X,T)_I$ is the range of $P_I$, its dimension is the number of $\lambda_j\in I$. 
Thus if $\lambda\in \mathbb{R}$ the trace 
$P_{\lambda+I}$ is the number of eigenvalues of $T$ within the spectral band $I_\lambda=\lambda+I$,
drifting to infinity as $\lambda\rightarrow+\infty$; locally on $X\times X$, 
$P_{\lambda+I}(x,y)$ encapsulates the asymptotic
concentration behavior of the eigensections of $T$ pertaining to the band $I_\lambda$
traveling to infinity.

In practice, rather than dealing directly with the $P_I$'s, 
after \cite{hor} and \cite{dg} one considers the
approximations obtained by replacing the characteristic function of $I$ by 
a $\mathcal{C}^\infty$ function $\gamma:\mathbb{R}\rightarrow\mathbb{R}$ of rapid decrease.
Thus one defines
\begin{equation*}
%\label{eqn:smoothed_projector}
 \mathcal{P}_\gamma(x,y)=:\sum_j\gamma(\lambda_j)\,e_j(x)\cdot \overline{e_j(y)}\,\,\,\,\,\,\,\,\,\,\,(x,y\in X).
\end{equation*}
Then $\mathcal{P}_\gamma$ is again a smoothing operator, given by a smoothed average of the
$P_\eta$'s (see the discussion in \cite{gr_sj}).
The analogue of $P_{\lambda+I}$ is then given by
$\mathcal{P}_{\gamma_\lambda}$, where $\gamma_\lambda=\gamma(\cdot -\lambda)$.

A convenient description of these smoothly averaged spectral projectors is as \lq smoothed T\"{o}plitz
wave operators\rq, as follows. For $\tau\in \mathbb{R}$, 
let $U_T(\tau)=:e^{i\tau T}=\Pi\circ e^{i\tau Q}\circ \Pi$; thus $U_T(\tau)$ is
the unitary endomorphism of $H(X)$ given by the restriction of $e^{i\tau Q}$, and has
distributional kernel 
\begin{equation}
 \label{eqn:toplitz_wave_operator}
U_T(\tau)(x,y)=:\sum_je^{i\tau\lambda_j}\,e_j(x)\cdot \overline{e_j(y)}\,\,\,\,\,\,\,\,\,\,\,(x,y\in X).
\end{equation}

For any
$\chi\in \mathcal{S}(\mathbb{R})$ (function of rapid decrease) the averaged operator
\begin{equation}
 \label{eqn:average_wave_operator}
 S_{\chi}=:\int_{-\infty}^{+\infty}\chi(\tau)\,U_T(\tau)\,\mathrm{d}\tau
\end{equation}
is a smoothing operator, with Schwartz kernel
\begin{equation}
\label{eqn:kernel_average_wave_operator}
S_{\chi}(x,y)=:\sum_j\widehat{\chi}(-\lambda_j)\,\,e_j(x)\cdot \overline{e_j(y)}\,\,\,\,\,\,\,\,\,\,\,(x,y\in X).
\end{equation}
Thus in the previous notation
$S_{\chi}=\mathcal{P}_\gamma$, with $\gamma=\widehat{\chi}(-\cdot)$ (here $\widehat{\chi}$ is the Fourier transform of $\chi$).
If $\chi$ is replaced by $\chi\cdot e^{-i\lambda(\cdot)}$, we get
\begin{equation}
 \label{eqn:drifting_projector}
 S_{\chi\cdot e^{-i\lambda(\cdot)}}(x,y)
 =\sum_j\widehat{\chi}(\lambda-\lambda_j)\,\,e_j(x)\cdot \overline{e_j(y)}\,\,\,\,\,\,\,\,\,\,\,(x,y\in X).
\end{equation}
That is, $S_{\chi\cdot e^{-i\lambda(\cdot)}}=\mathcal{P}_{\gamma_\lambda}$. In particular, 
$$
\mathrm{trace}\left(S_{\chi\cdot e^{-i\lambda(\cdot)}}\right)=\sum_j\widehat{\chi}(\lambda-\lambda_j);
$$
an asymptotic estimate on the latter trace leads, by a Tauberian argument, to a Weyl law for $T$.
In \cite{p_weyl} a pointwise asymptotic estimate on the diagonal restriction $S_{\chi\cdot e^{-i\lambda(\cdot)}}(x,x)$
was given for $\lambda\rightarrow+\infty$, leading  (in this special setting)
to a local proof of the Weyl law for T\"{o}plitz operators
in \cite{bdm-g}. In the present paper, we shall look at the near diagonal scaling asymptotics of 
$S_{\chi\cdot e^{-i\lambda(\cdot)}}$. We shall also consider similar asymptotics
for the equivariant versions of these operators, arising 
in the presence of quantizable Hamiltonian actions on $(M,J,\omega)$.

Suppose that $G$ is a connected compact Lie group of real dimension $e$, and
 that $\mu^M:G\times M\rightarrow M$ is a holomorphic
and Hamiltonian action, with moment map $\Phi:M\rightarrow \mathfrak{g}^\vee$, where $\mathfrak{g}$ is the Lie algebra of $G$.
Also, assume that $\mu$ can be linearized to a metric preserving holomorphic action of $G$ on $(A,h)$, so that by restriction we obtain
an action of $G$ on $X$, $\mu^X:G\times X\rightarrow X$. 
This can always be done infinitesimally: if $\xi\in \mathfrak{g}$, let $\xi_M\in \mathfrak{X}(M)$ be the vector field on $M$
induced by $\xi$ under $\mu$, and let $\Phi_\xi=:\langle \Phi,\xi\rangle$ be the $\xi$-component of $\Phi$; then 
\begin{equation}
 \label{eqn:contact_vector_field}
\xi_X=:\xi_M^\sharp-\Phi_\xi\,\frac{\partial}{\partial \theta}
\end{equation}
is a contact vector field on $X$, lifting $\xi_M$ under $\mathrm{d}\pi$. Here $\upsilon^\sharp\in\mathfrak{X}(X)$ is the horizontal
lift of $\upsilon\in \mathfrak{X}(M)$, and $\partial/\partial\theta\in \mathfrak{X}(X)$ is the infinitesimal generator of the structure
$S^1$-action on $X$. Thus the obstruction to the existence of a global lifting is of topological nature.

In view of the compatibility assumptions on $\mu^M$, $G$ acts on $X$ under $\mu^X$ as a group of contactomorphims and  
leaves the Hardy space invariant; hence there is a naturally induced unitary representation $\widetilde{\mu}:G\rightarrow U\big(H(X)\big)$,
where $\widetilde{\mu}_g(f)=:f\circ \mu^X_{g^{-1}}$. Let the set $\{\varpi\}$ label 
the collection of all irreducible characters $\chi_\varpi$
of $G$, associated to the (finite dimensional) unitary representations $\left(\rho_\varpi,V^{(\varpi)}\right)$. 

By the Peter-Weyl theorem, if $\rho:G\rightarrow U(H)$ is 
a unitary representation of $G$ on any Hilbert space $H$, then there is a natural equivariant and orthogonal 
Hilbert space direct sum decomposition
$$
H=\bigoplus_{\varpi}H^{(\varpi)},
$$
where $H^{(\varpi)}\subseteq H$ is a closed subspace, unitarily and equivariantly isomorphic to a Hilbert space
direct sum of copies of $V^{(\varpi)}$.
Correspondingly, in our case we obtain a Hilbert space direct sum decomposition
$$
H(X)=\bigoplus_{\varpi}H(X)^{(\varpi)}.
$$

Now suppose that $T$ is a $G$-invariant self-adjoint T\"{o}plitz operator on $X$,
with positive symbol $\varsigma_T>0$. Then for every eigenvalue $\eta\in \mathrm{Spec}(T)$
the eigenspace $H(X,T)_\eta\subseteq H(X)$ is a finite-dimensional unitary $G$-representation, 
and so it also admits an equivariant direct sum decomposition into isotypical components:
$$
H(X,T)_\eta =\bigoplus_{\varpi}H(X,T)_\eta^{(\varpi)};
$$
here $H(X,T)_\eta^{(\varpi)}=H(X,T)_\eta\cap H(X)^{(\varpi)}$ is, for any fixed $\eta$, the null space for almost every $\varpi$.
Changing point of view, every $H(X)^{(\varpi)}$ is invariant under $T$, and so it splits equivariantly as a Hilber space direct sum
\begin{equation}
 \label{eqn:varpi_eta}
H(X)^{(\varpi)}=\bigoplus_{\eta}H(X,T)_\eta^{(\varpi)}.
\end{equation}
%Now it may well happen that for a fixed $\varpi$, $H(X)^{(\varpi)}_\eta$ is positive dimensional for infinitely many $\eta$.
%This will in fact be the case under the geometric assumptions of the present paper.

Let $T^{(\varpi)}:H(X)^{(\varpi)}\rightarrow H(X)^{(\varpi)}$ be restriction of $T$, and let
$\lambda_{1}^{(\varpi)}\le \lambda_{2}^{(\varpi)}\le \cdots$ be the eigenvalues of $T^{(\varpi)}$,
repeated according to multiplicity.
Let $(e_{j}^{(\varpi)})$ be any complete orthonormal system of $H(X)^{(\varpi)}$ 
such that $T^{(\varpi)}(e_{j}^{(\varpi)})=\lambda_{j}^{(\varpi)}\,e_{j}^{(\varpi)}$.
Then the equivariant analogue of (\ref{eqn:toplitz_wave_operator}) is its restriction to $H(X)^{(\varpi)}$:
\begin{equation}
 \label{eqn:equivariant_toplitz_wave_operator}
U_T^{(\varpi)}(\tau)(x,y)=:\sum_je^{i\tau\lambda_j^{(\varpi)}}\,e_j^{(\varpi)}(x)\cdot \overline{e_j^{(\varpi)}(y)}\,\,\,\,\,\,\,\,\,\,\,(x,y\in X).
\end{equation}
Similarly, we may consider its traveling smoothly averaged version,
\begin{equation}
 \label{eqn:traveling_average_wave_operator}
 S_{\chi\cdot e^{-i\lambda(\cdot)}}^{(\varpi)}=:\int_{-\infty}^{+\infty}\chi(\tau)\,e^{-i\lambda\tau}\,
U_T^{(\varpi)}(\tau)\,\mathrm{d}\tau,
\end{equation}
which is again a smoothing operator, with Schwartz kernel
\begin{equation}
 \label{eqn:equivariant_average_wave_operator}
S_{\chi\cdot e^{-i\lambda(\cdot)}}^{(\varpi)}(x,y)=:
\sum_j\widehat{\chi}\big(\lambda-\lambda_j^{(\varpi)}\big)\,\,e_j^{(\varpi)}(x)\cdot \overline{e_j^{(\varpi)}(y)}
\end{equation}
In particular, 
$$
\mathrm{trace}\left(S_{\chi\cdot e^{-i\lambda(\cdot)}}^{(\varpi)}\right)=\sum_j\widehat{\chi}\left(\lambda
-\lambda_j^{(\varpi)}\right),
$$

Thus the local asymptotics of $S_{\chi\cdot e^{-i\lambda(\cdot)}}^{(\varpi)}(\cdot,\cdot)$ captures the collective concentration behavior of
the eigensections $e_j^{(\varpi)}$'s, while its trace controls the asymptotic distribution of the eigenvalues $\lambda_j^{(\varpi)}$'s.

In the following, let us adopt the following notation:
$$
M'=:\Phi^{-1}(\mathbf{0})\subseteq M,\,\,\,\,\,\,\,\,\,\,X'=:\pi^{-1}(M')\subseteq X.
$$

Furthermore, it is convenient to make a more specific choice for a cut-off function.

\begin{defn}
Fix $\epsilon>0$. 
A \textit{good $\epsilon$-cut-off} is a function $\chi\in \mathcal{C}^\infty_0\big((-\epsilon,\epsilon)\big)$such that
\begin{enumerate}
\item  $\chi\ge 0$, $\chi(0)=1$;
\item $\widehat{\chi}\ge 0$.
\end{enumerate}
\end{defn}

It is well-known that such cut-offs exist (\S \ref{sct:cut_offs}). The condition $\chi(0)=1$ is simply a convenient
normalization.
Our first result is the following:
\begin{thm}
 \label{thm:rapid_decay}
Let $T$ be a $G$-invariant self-adjoint first oder T\"{o}plitz operator on $X$ with $\varsigma_T>0$, and let $\varpi$ be an
irreducible character of $G$. Suppose $\epsilon>0$ is small enough and $\chi$ is a good $\epsilon$-cut-off.
Then the following holds.
\begin{enumerate}
 \item Uniformly in $x,y\in X$, for $\lambda\rightarrow -\infty$ we have 
$$
S_{\chi\cdot e^{-i\lambda(\cdot)}}^{(\varpi)}(x,y)=O\left(\lambda^{-\infty}\right).
$$
\item For any $C,\epsilon>0$, uniformly for 
$$
\min\Big\{\mathrm{dist}_X(x,X'),\mathrm{dist}_X(y,X')\Big\}\ge C\,\lambda^{-\frac{11}{24}}
$$
we have for $\lambda\rightarrow+\infty$:
$$
S_{\chi\cdot e^{-i\lambda(\cdot)}}^{(\varpi)}(x,y)=O\left(\lambda^{-\infty}\right).
$$
\end{enumerate}
\end{thm}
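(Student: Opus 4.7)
The plan is to represent $S_{\chi\cdot e^{-i\lambda(\cdot)}}^{(\varpi)}(x,y)$ as a single oscillatory integral in the variables $(\tau,t,g)$ and then dispose of the two statements by non-stationary phase arguments—in $\tau$ for~(1) and in a Lie algebra variable for~(2). Writing the projection onto the $\varpi$-isotype as $P^{(\varpi)}=d_\varpi\int_G\overline{\chi_\varpi(g)}\,\widetilde\mu_g\,dg$ (with $d_\varpi=\dim V^{(\varpi)}$) and inserting this into (\ref{eqn:traveling_average_wave_operator}) yields
\[
S_{\chi\cdot e^{-i\lambda(\cdot)}}^{(\varpi)}(x,y)
= d_\varpi\int_G\int_{\mathbb R}
\overline{\chi_\varpi(g)}\,\chi(\tau)\,e^{-i\lambda\tau}\,U_T(\tau)\big(\mu^X_{g^{-1}}(x),y\big)\,d\tau\,dg.
\]
Combining the Boutet de Monvel--Sj\"ostrand parametrix $\Pi(x,y)=\int_0^{+\infty}e^{it\psi(x,y)}s(t,x,y)\,dt$ with the description of $U_T(\tau)$ as a T\"oplitz FIO whose canonical relation is the graph of the Hamilton flow of $\sigma_T$ on $\Sigma$, the integrand, up to a smoothing remainder, becomes an oscillatory integral in $t>0$ with phase
\[
\Phi_\lambda(\tau,t,g;x,y)=t\,\widetilde\psi\big(\tau,\mu^X_{g^{-1}}(x),y\big)-\lambda\tau,
\]
where $\widetilde\psi$ reduces to $\psi$ at $\tau=0$ and satisfies $\partial_\tau\widetilde\psi(\tau,x,x)=\varsigma_T(x)$.

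Part~(1) then follows by $\tau$-integration by parts. By compactness of $X$ and the hypothesis $\varsigma_T>0$ one has $\varsigma_T\ge c>0$; hence $\partial_\tau\Phi_\lambda=t\,\varsigma_T+O(t\cdot\mathrm{dist}_X)-\lambda\gtrsim t+|\lambda|$ for $\lambda\le 0$ on the support of the amplitude (where the parametrix forces $\mathrm{dist}_X\ll 1$, the far part being absorbed in the smoothing remainder). Repeated integration by parts against the Szeg\H{o} symbol $s\in S^d$ in $t$ gives $O(|\lambda|^{-\infty})$ uniformly in $(x,y)$, expressing the familiar incompatibility between the positive-frequency content of $\Pi$ and negative spectral parameters.

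For part~(2), after a partition of unity on $G$ reducing to a neighborhood of the identity (the complementary region contributes only to a smoothing remainder, since there $\mu^X_{g^{-1}}(x)$ is bounded away from $y$), I introduce exponential coordinates $g=\exp(\xi)$ on $\mathfrak g$. Using $\psi(x,x)=0$, $d_x\psi(x,x)=\alpha_x$, and (\ref{eqn:contact_vector_field}), one computes
\[
\partial_\xi\Phi_\lambda\big|_{\xi=0}=-t\,\Phi_\xi(\pi(x))+O\big(t\cdot\mathrm{dist}_X(x,y)+t\cdot|\xi|\big).
\]
On the $\tau$-critical locus $t\,\varsigma_T\simeq\lambda$, and using that $|\Phi\circ\pi|$ is comparable to $\mathrm{dist}_X(\cdot,X')$ near $X'$, the $\xi$-gradient has size $\gtrsim\lambda\cdot\mathrm{dist}_X(x,X')$. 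Iterated $\xi$-integration by parts gains $(\lambda\cdot\mathrm{dist}_X(x,X'))^{-1}$ per step, partially offset by the factors of $t\sim\lambda$ introduced by differentiating the phase and the amplitude; under the hypothesis $\mathrm{dist}_X(x,X')\ge C\lambda^{-11/24}$ the net gain per step is a genuine negative power of $\lambda$, so iterating to arbitrary order yields $O(\lambda^{-\infty})$. The distant-$y$ case follows from the symmetry $S^{(\varpi)}(x,y)=\overline{S^{(\varpi)}(y,x)}$.

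The main obstacle is the precise accounting in~(2): the exponent $11/24=1/2-1/24$ encodes the trade-off between the Heisenberg threshold $\lambda^{-1/2}$ at which the phase becomes non-stationary and the slack $\lambda^{-1/24}$ per step needed to dominate the polynomial losses in the Szeg\H{o} amplitude under repeated differentiation in $\xi$. One must also confirm that the cross term $t\cdot\mathrm{dist}_X(x,y)$ in $\partial_\xi\Phi_\lambda$ is negligible relative to $t\cdot|\Phi\circ\pi(x)|$, which follows from the exponential decay of the parametrix in $t\cdot\mathrm{dist}_X^2$ combined with a further localization near the $G$-orbit of $x$. The remaining ingredients—the Jacobian of $\mu^X_{\exp(-\xi)}$, the smoothness of $\chi_\varpi$, and the symbolic behavior of $s$—are standard consequences of the B\'ezout-type calculus of T\"oplitz operators.
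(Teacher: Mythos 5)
Your outline for part (1) matches the paper's: integrate by parts in $\tau$ using $\varsigma_T>0$ (the paper does this through the non-composed phase, getting $\partial_\tau\Psi_1\ge C(\|\eta\|+|\lambda|)$ for $\lambda\ll 0$, but the mechanism is the same). Your route for part (2), however, is genuinely different and shorter than the paper's. The paper does not argue by direct non-stationary phase in $\mathfrak g$; instead it pushes all the way through the stationary-phase machinery of Theorem~\ref{thm:local_scaling_asymtpotics} — rescaling $(\mathbf v,\nu)\mapsto\lambda^{-1/2}(\mathbf v,\nu)$, stationary phase in $(\theta,t,\tau,r)$ then in $(\mathbf v,\mathbf\Omega)$, and finally a Gaussian integral in $\nu$ — so that the rapid decay emerges as the factor $\exp(-\tfrac12\lambda\|L(\Phi(m))\|^2)$ evaluated at $\|\Phi(m)\|\gtrsim\lambda^{-11/24}$. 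Your approach buys brevity and conceptual clarity (the decay is visible without computing the expansion); the paper's approach recycles machinery it needs anyway for Theorem~\ref{thm:local_scaling_asymtpotics}. Both reduce the off-diagonal case via positivity of $\widehat\chi$, the paper by Cauchy–Schwarz, you by Hermitian symmetry — functionally equivalent given the $\min$ in the hypothesis.

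There is one genuine, though fixable, gap: you localize in $G$ to a neighborhood of the identity, claiming the complementary region is smoothing because $\mu^X_{g^{-1}}(x)$ is bounded away from $y$. This is false as soon as $G^X_x$ is nontrivial: for any $g_\ell\in G^X_x\setminus\{e\}$ one has $\mu^X_{g_\ell^{-1}}(x)=x$, so for $g$ near $g_\ell$ and $y$ near $x$ the kernel $\Pi$ is singular there. Since $\mathbf 0$ is only assumed to be a regular value of $\Phi$, the stabilizer near $X'$ is finite but need not be trivial. The correct reduction, as the paper makes explicit (writing $\rho_x(g)=\sum_{j}\rho(g_jg)$ over $G^X_m=\{g_1=e,\dots,g_r\}$), is to a union of neighborhoods of the stabilizer elements; your non-stationary phase argument then applies verbatim to each of the $r$ summands, with the unitary $A_\ell$ playing the role of the identity. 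You should also make the localization in $\xi$ and in $\mathrm{dist}_X(z,y)$ explicit before iterating, since the error term $O(t|\xi|)$ in your $\partial_\xi\Phi_\lambda$ estimate competes with $t|\Phi(m)|$ only once the support has been cut down to $|\xi|,\ \mathrm{dist}_X(z,y)\lesssim\lambda^{-11/24}$ — precisely the prior localizations the paper establishes (its Lemmas on shrinking $(\theta,\mathbf v)$ and $\nu$, and the Gaussian decay of $\mathrm{Im}\,\psi$) before any gradient bound is invoked.
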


Here $\lambda^{-\frac{11}{24}}$ might be replaced by $\lambda^{a-\frac{1}{2}}$ for any $a>0$. We set $a=1/24$ to fix ideas and because it
is a convenient choice in the following.

We shall next consider the asymptotics of $S_{\chi\cdot e^{-i\lambda(\cdot)}}^{(\varpi)}(x,y)$ 
for $x,y\rightarrow X'$, and $\mathrm{dist}_X(x,y)\rightarrow 0$. By the same approach, one might more generally consider the case 
$\mathrm{dist}_X\big(x,\mu_g(y)\big)\rightarrow 0$
for some $g\in G$, but for the sake of simplicity we shall restrict ourselves to near-diagonal asymptotics. 
More specifically,
we shall consider the asymptotics of $S_{\chi\cdot e^{-i\lambda(\cdot)}}^{(\varpi)}(x',x'')$
for $x',x''\rightarrow x\in X'$ at a controlled rate. We shall think of $x'$ and $x''$ as obtained
from $x$ by small displacements along tangent directions, and the scaling asymptotics will be controlled by the
geometry of these directions with respect to $X'$ and the $G$-orbit. It is then in order to label the various components
of the displacements that go into the statement.

The connection $\alpha$ determines a direct sum decomposition of the tangent bundle of $X$,
$
TX=\mathcal{V}\oplus \mathcal{H}
$,
where $\mathcal{V}=:\ker(\mathrm{d}\pi)=\mathrm{span}\big(\partial/\partial\theta\big)$ is the vertical tangent bundle,
and $\mathcal{H}=:\ker(\alpha)$ is the horizontal distribution. If $\upsilon\in T_xX$, we shall write accordingly
$\upsilon=\left(\upsilon',\overrightarrow{\upsilon}\right)$, where
$\upsilon'\in \mathcal{V}_x$ and $\overrightarrow{\upsilon}\in \mathcal{H}_x\cong T_mM$, where $m=\pi(x)$.

For $m\in M'$, let $N_m=:{T_mM'}^\perp\subseteq T_mM$ be the normal space to $M'$ in $M$ at $m$,
and consider the orthogonal direct sum decomposition in the tangent bundle of $M$ along $M'$:
\begin{equation}
 \label{eqn:tangent_component_decomposition}
 T_mM=:T_mM_{\mathrm{hor}}\oplus T_mM_{\mathrm{ver}}\oplus  T_mM_{\mathrm{trasv}},
\end{equation}
where:
$$
T_mM_{\mathrm{hor}}=:T_m(G\cdot m)^\perp\cap T_mM',\,\,\,
T_mM_{\mathrm{ver}}=:T_m(G\cdot m),\,\,\,T_mM_{\mathrm{trasv}}=:N_m.
$$
Here $G\cdot m\subseteq M'$ is the $G$-orbit through $m$, and $T_m(G\cdot m)$ its tangent space.
Furthermore, $T_mM_{\mathrm{hor}}$ is a complex vector subspace of $T_mM$, while 
$T_mM_{\mathrm{ver}}$ and $T_mM_{\mathrm{trasv}}$ are totally real subspaces, related by
$T_mM_{\mathrm{ver}}=J_m\big(T_mM_{\mathrm{trasv}}\big)$,
and $J_m$ is the complex structure at $m\in M$.

When $m\in M'$ we shall correspondingly write $\overrightarrow{\upsilon}\in T_mM$ as
\begin{equation}
\label{eqn:hvt}
 \overrightarrow{\upsilon}=\overrightarrow{\upsilon}_{\mathrm{h}}+\overrightarrow{\upsilon}_{\mathrm{v}}+
\overrightarrow{\upsilon}_{\mathrm{t}},
\end{equation}
where $\overrightarrow{\upsilon}_{\mathrm{h}}\in T_mM_{\mathrm{hor}}$,
$\overrightarrow{\upsilon}_{\mathrm{v}}\in T_mM_{\mathrm{ver}}$, 
$\overrightarrow{\upsilon}_{\mathrm{t}}\in T_mM_{\mathrm{trasv}}$.

The components of the displacements from $x$ to $x'$ and from $x$ to $x''$
will control the scaling asymptotics by certain \lq universal exponents\rq, depending on the symplectic and Euclidean
structures at $m$, and given by quadratic functions $Q_{\mathrm{hor}}^T,\,Q_{\mathrm{tv}}^T$
in a pair 
$\big(\overrightarrow{\upsilon},\overrightarrow{\upsilon}'\big)\in T_mM\times T_mM$, that we now define.

\begin{defn}
 \label{defn:psi_2}
Let $(V,J_V)$ is a complex vector space, and suppose that
$h_V=g_V-i\omega_V$ is a positive definite Hermitian product on it; thus $g_V$ is a $J_V$-invariant
Euclidean product and $\omega_V=g\big(J_V(\cdot),\cdot\big)$ is a 
a symplectic structure. Let us define after \cite{sz} 
$
\psi_2^V:V\times V\rightarrow \mathbb{C}
$
by
$$
\psi_2^V(\mathbf{v},\mathbf{v}')=-i\,\omega_V(\mathbf{v},\mathbf{v}')-\frac 12\,\|\mathbf{v}-\mathbf{v}'\|^2
_V,
$$
where $\|\cdot \|_V$ is the norm for $g_V$. If $(V,J_V)$ is $\mathbb{C}^d$ with its standard Hermitian product,
we shall write $\psi_2^{\mathbb{C}^d}=\psi_2$.
\end{defn}

We obtain $\psi_2^M:TM\oplus TM\rightarrow \mathbb{C}$ given by
$$
\psi_2^M\left(\overrightarrow{\upsilon}_1,\overrightarrow{\upsilon}_2\right)=:\psi_2^{T_mM}\left(\overrightarrow{\upsilon}_1,\overrightarrow{\upsilon}_2\right)
$$
if $m\in M$ and $\overrightarrow{\upsilon}_j\in T_mM$.

\begin{defn}
Let us define maps (also depending on the T\"{o}plitz operator $T$)
$$Q_{\mathrm{h}}^T,\,Q_{\mathrm{vt}},\,Q_{\mathrm{vt}}^T:\left. TM\oplus TM\right|_{M'}\rightarrow \mathbb{C}$$ as follows.
Suppose $m\in M'$ and
$\overrightarrow{\upsilon}_j\in T_mM$. Then:
%\begin{itemize}
%\item 
%If $m\in M'$ and
%$\overrightarrow{\upsilon}_j\in T_mM$, 
\begin{eqnarray}
 \label{eqn:HORIZONTAL_quadratic_term}
Q_{\mathrm{h}}^T\left(\overrightarrow{\upsilon}_1,\overrightarrow{\upsilon}_2\right)&=:&\frac{1}{\varsigma_T(x)}\,
\psi_2^M\left({\overrightarrow{\upsilon}_1}_{\mathrm{h}},{\overrightarrow{\upsilon}_2}_{\mathrm{h}}\right)\\
&=&
\psi_2^M\left(\frac{1}{\sqrt{\varsigma_T(x)}}\,{\overrightarrow{\upsilon}_1}_{\mathrm{h}},
\frac{1}{\sqrt{\varsigma_T(x)}}\,{\overrightarrow{\upsilon}_2}_{\mathrm{h}}\right).
\nonumber
\end{eqnarray}
% \end{itemize}
%
%Let us also define
%$Q_{\mathrm{vt}}:\left. TM\oplus TM\right|_{M'}\rightarrow \mathbb{C}$ by setting
\begin{eqnarray*}
 Q_{\mathrm{vt}}\left(\overrightarrow{\upsilon}_1,\overrightarrow{\upsilon}_2\right)&=&i\,\Big[
\omega_m\big({\overrightarrow{\upsilon}_1}_{\mathrm{v}},{\overrightarrow{\upsilon}_1}_{\mathrm{t}}\big)
-\omega_m\big({\overrightarrow{\upsilon}_2}_{\mathrm{v}},{\overrightarrow{\upsilon}_2}_{\mathrm{t}}\big)\Big]\\
&&-\left(\left\|{\overrightarrow{\upsilon}_1}_{\mathrm{t}}\right\|_m^2+\left\|{\overrightarrow{\upsilon}_2}_{\mathrm{t}}\right\|^2_m\right),
\end{eqnarray*}
%and then set 
\begin{eqnarray}
 \label{eqn:Q_vt}
Q_{\mathrm{vt}}^T\left(\overrightarrow{\upsilon}_1,\overrightarrow{\upsilon}_2\right)&=:&\frac{1}{\varsigma_T(x)}\,
Q_{\mathrm{vt}}\left(\overrightarrow{\upsilon}_1,\overrightarrow{\upsilon}_2\right)
\nonumber\\
&=&
Q_{\mathrm{vt}}\left(\frac{1}{\sqrt{\varsigma_T(x)}}\,\overrightarrow{\upsilon}_1,
\frac{1}{\sqrt{\varsigma_T(x)}}\,\overrightarrow{\upsilon}_2\right).
\end{eqnarray}
 \end{defn}

\begin{rem} A notational warning is in order. Tangent vectors get decomposed into vertical and horizontal components
in two stages. First, we write $\upsilon=\big(\upsilon',\overrightarrow{\upsilon}\big)\in T_xX$ with respect to
the connection $\alpha$; thus $\overrightarrow{\upsilon}$ may be viewed in a natural manner as an element of $T_mM$ if $m=\pi(x)$.
Secondly, if $m\in M'$ then $\overrightarrow{\upsilon}\in T_mM$ may itself be decomposed as in (\ref{eqn:hvt}), and here 
horizontality refers to the decomposition of $\overrightarrow{\upsilon}-\overrightarrow{\upsilon}_t\in T_mM'$ with the respect to the
natural connection of principal $G$-bundle $M'\rightarrow M_0$.
\end{rem}

The scaling asymptotics in this paper will be expressed is a system of 
Heisenberg local coordinates (HLC for short) for $X$ centered at $x$ \cite{sz}, that we shall denote by:
$$
\gamma_x:(\theta,\mathbf{w})\in (-\pi,\pi)\times B_{2d}(\mathbf{0},\delta)\mapsto x+(\theta,\mathbf{w})\in X;
$$
here $B_{2d}(\mathbf{0},\delta)\subseteq \mathbb{R}^{2d}$ is the open ball centered at the origin of radius $\delta>0$.

Heisenberg local coordinate are especially suited to exhibiting the universal character of scaling asymptotics 
related to $\Pi$ (see \cite{sz} for a precise definition and discussion, and \S \ref{scn:HLC} below
for a brief recall of some salient
properties). For now, note that
$\theta$ is an angular coordinate along the circle fiber, and $\mathbf{w}$ is a local coordinate on $M$ with good
metric properties; in addition, the unitary local section of $A^\vee$ 
given in local coordinates by $\mathbf{w}\mapsto (\mathbf{w},1)$
is suitably adapted to the connection.
We shall also set $x+\mathbf{w}=:x+(0,\mathbf{w})$.

Given the choice of HLC centered at $x\in X$, there are induced unitary isomorphisms
$T_xX\cong \mathbb{R}\oplus \mathbb{R}^{2d}$ and $T_mM\cong \mathbb{R}^{2d}\cong \mathbb{C}^d$,
which will be implicit in the following; accordingly any $\upsilon\in T_xX$ will be written as a pair
$\upsilon=(\theta,\mathbf{w})\in \mathbb{R}\times \mathbb{R}^{2d}$; if
$\upsilon\in T_xX$ is small enough, we shall then write $x+\upsilon$ with this identification.
We shall also write $Q_{\mathrm{h}}^T\left(\mathbf{w}_1,\mathbf{w}_2\right)$ and 
$Q_{\mathrm{vt}}^T\left(\mathbf{w}_1,\mathbf{w}_2\right)$ for and (\ref{eqn:HORIZONTAL_quadratic_term})
and (\ref{eqn:Q_vt}), respectively, if $\upsilon_j=(\theta_j,\mathbf{w}_j)$ in local coordinates.

Under this unitary isomorphism $T_mM\cong \mathbb{R}^{2d}$, the direct sum decomposition 
(\ref{eqn:tangent_component_decomposition}) corresponds to the one
\begin{equation}
 \label{eqn:explicit_direct_sum_decomposition}
 \mathbb{R}^{2d}\cong \mathbb{R}^{2(d-e)}_{\mathrm{hor}}\oplus \mathbb{R}^{e}_{\mathrm{ver}}
 \oplus  \mathbb{R}^{e}_{\mathrm{trasv}},
\end{equation}
where we label each Euclidean summand according to the corresponding component.

We need some further ingredients
that go into the scaling asymptotics of 
$S_{\chi\cdot e^{-i\lambda(\cdot)}}^{(\varpi)}$.

\begin{defn}
 \label{defn:stabilizer}
If $\mathbf{0}\in \mathfrak{g}^\vee$ is a regular value of $\Phi$,
then $G$ acts locally freely on $M'$, hence \textit{a fortiori}
on $X'$. Thus any $m\in M'$ has finite stabilizer $G_m^M\subseteq G$, and if $x\in \pi^{-1}(m)$ its stabilizer 
is a normal subgroup $G^X_x\trianglelefteq G_m^M$. 
Since the action of $G$ on $A$ 
(and $A^\vee$) is fiberwise linear, 
$G^X_x=G^X_y$ if $x,y\in \pi^{-1}(m)$; thus we shall write
$G^X_m$ for $G^X_x$ when
$m=\pi(x)$. 

Let $\jmath_m=:\big[G_m^M:G^X_m\big]$ for $m\in M'$.
\end{defn}

\begin{defn}
\label{defn:effective_volume}
Let $G\cdot m\cong G/G^M_m\subseteq M$ be the $G$-orbit of $m\in M'$. 
The \textit{effective
volume} $V_{\mathrm{eff}}^M(m)$ at $m$
is the volume of $G\cdot m$ for the induced Riemannian structure of $M$ \cite{burns_gu}.

Similarly, if $x\in X'$ the effective volume $V_{\mathrm{eff}}^X(x)$ is the volume of $G\cdot x
\cong G/G^X_x\subseteq X$. As $V_{\mathrm{eff}}^X$ is obviously $S^1$-invariant, with a slight abuse of language
we shall view it as a function on $M'$. 
\end{defn}

\begin{rem}
 \label{rem:horizontality_effective}
Since the $G$-action on $X'$ is horizontal with respect to $\alpha$, if $m=\pi(x)\in M'$ then
the projection $G\cdot x\rightarrow G\cdot m$ is a local Riemannian isometry and a covering
of degree $\jmath_m$; therefore,
\begin{equation*}
%\label{eqn:volume_comparison}
V_{\mathrm{eff}}^X(m)=\jmath_m\cdot V_{\mathrm{eff}}^M(m)\,\,\Longrightarrow\,\,
\left|G^M_m\right|\cdot V_{\mathrm{eff}}^M(m)=\left|G^X_m\right|\cdot V_{\mathrm{eff}}^X(m).
\end{equation*}
\end{rem}

\begin{defn}
 If $x\in X'$, $m=\pi(x)$, and $\varpi$ an irreducible character of $G$, let us define
$A^T_\varpi:M'=\Phi^{-1}(\mathbf{0})\rightarrow \mathbb{R}$ 
$$
A^T_{\varpi}(x)=:2^{e/2}\,\frac{\dim(V_\varpi)}{V^X_{\mathrm{eff}}(m)}\cdot \varsigma_T(x)^{-(d+1-e/2)}.
$$
\end{defn}

For any $g\in G^M_m$, the differential 
$d_m\mu^M_g:T_mM\rightarrow T_mM$ is a unitary automorphism of $T_mM$; hence its Jacobian matrix 
$A_g$ in HLC centered at $x$ (with $m=\pi(x)$) is unitary.

In the following, $\sim$ will mean \lq has the same asymptotics as\rq.

\begin{thm}
 \label{thm:local_scaling_asymtpotics}
 Suppose that $\mathbf{0}\in \mathfrak{g}^\vee$ is a regular value of $\Phi$, and let $T$ be a 
 $G$-invariant first order self-adjoint T\"{o}plitz operator with $\varsigma_T>0$. 

For any sufficiently small $\epsilon>0$ any
good $\epsilon$-cut-off
 $\chi$, the following holds. 
 
Fix $x\in X'$ and adopt HLC on $X$ centered at $x$; set $m=:\pi(x)$. 

Then, uniformly for $\upsilon_j=(\theta_j,\mathbf{w}_j)\in T_xX$
 with $\|\upsilon_j\|\le C\,\lambda^{1/24}$, $j=1,2$, as $\lambda\rightarrow +\infty$ we have
\begin{eqnarray*}
 \lefteqn{S_{\chi\cdot e^{-i\lambda(\cdot)}}^{(\varpi)}
 \left(x+\frac{\upsilon_1}{\sqrt{\lambda}},x+\frac{\upsilon_2}{\sqrt{\lambda}}\right)}\\
& \sim& 2\pi\cdot A^T_{\varpi}(x)\cdot 
e^{i\sqrt{\lambda}\,(\theta_1-\theta_2)/\varsigma_T(x)+Q_{\mathrm{tv}}^T(\mathbf{w}_1,\mathbf{w}_2)}
\cdot
\left(\frac{\lambda}{\pi}\right)^{d-e/2}\nonumber\\
&&\cdot \frac{1}{\big|G^X_m\big|}\,
\sum_{g\in G^X_m}\,\overline{\chi_\varpi(g)}\cdot
 e^{Q^T_{\mathrm{h}}\big(\mathbf{w}_{1},A_g\mathbf{w}_{2}\big)}
\cdot S_g(\lambda,x,\upsilon_1,\upsilon_2), 
 \end{eqnarray*}
where each factor $S_g(\lambda,x,\upsilon_1,\upsilon_2)$ 
satisfies an asymptotic expansion of the form
\begin{equation*}
 S_g(\lambda,x,\upsilon_1,\upsilon_2)\sim 1+\sum_{l\ge 1}\lambda^{-l/2}\,F_{gl}(x,\upsilon_1,\upsilon_2),
\end{equation*}
$F_{gl}(x,\upsilon_1,\upsilon_2)$ being a polynomial in $\upsilon_a=(\theta_a,\mathbf{w}_a)$, $a=1,2$, of total degree
$\le 11\,l$ (also depending on $T$). 
\end{thm}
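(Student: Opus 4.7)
The plan is to represent $S_{\chi\cdot e^{-i\lambda(\cdot)}}^{(\varpi)}$ as an oscillatory integral with large parameter $\lambda$, then apply stationary phase in stages: first localizing the group variable to the finite stabilizer $G_m^X$, and then collapsing the remaining phase variables to a single critical point. Using the $G$-invariance of $T$ and the Peter--Weyl formula for the isotypical projector, one writes
$$
S_{\chi\cdot e^{-i\lambda(\cdot)}}^{(\varpi)}(x,y) \;=\; \dim(V_\varpi) \int_G \overline{\chi_\varpi(g)}\, S_{\chi\cdot e^{-i\lambda(\cdot)}}\bigl(\mu_{g^{-1}}^X(x),y\bigr)\, \mathrm{d}g,
$$
and expands $S_{\chi\cdot e^{-i\lambda(\cdot)}}(x,y) = \int \chi(\tau)\, e^{-i\lambda\tau}\, U_T(\tau)(x,y)\, \mathrm{d}\tau$. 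By the Boutet de Monvel--Guillemin calculus and the Boutet de Monvel--Sj\"ostrand parametrix $\Pi(x,y)\sim \int_0^\infty e^{iu\psi(x,y)}\, s(u,x,y)\, \mathrm{d}u$, this produces a multiple oscillatory integral in $(g,\tau,u)$ (plus composition variables coming from $\Pi\circ e^{i\tau Q}\circ\Pi$). After the substitution $u=\lambda u'$ the phase acquires $\lambda$ as a large parameter in the schematic form $\lambda\bigl[u'\,\psi\bigl(\mu_{g^{-1}}x,y\bigr)+u'\tau\,\varsigma_T-\tau\bigr]$, up to smooth remainders.

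By Theorem \ref{thm:rapid_decay} and the rescaling $\upsilon_j/\sqrt{\lambda}$ with $\|\upsilon_j\|\le C\lambda^{1/24}$, the integrand is negligible unless both arguments lie within $O(\lambda^{-11/24})$ of $X'$, and the Szeg\"o wavefront furthermore forces $\mu_{g^{-1}}^X(x)$ close to $y$, localizing $g$ to an $O(\lambda^{-1/2})$-neighborhood of $G_m^X$. Partition $G$ accordingly and in each chart parametrize $g=g_0\exp(\xi/\sqrt\lambda)$ with $g_0\in G_m^X$. The critical-point equations in $(\tau,u',\xi)$ give $u'=1/\varsigma_T(x)$, $\tau=0$, $\xi=0$, and non-degeneracy of the transverse Hessian follows from regularity of $\mathbf{0}$ for $\Phi$ together with positivity of $\varsigma_T$. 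Applying stationary phase chart-by-chart, the $\xi$-Gaussian couples vertical and transverse displacements, yielding the factor $e^{Q_{\mathrm{vt}}^T(\mathbf{w}_1,\mathbf{w}_2)}$, the multiplicative constant $2^{e/2}/V_{\mathrm{eff}}^X(m)$ (via Remark \ref{rem:horizontality_effective}), and the dimensional reduction $(\lambda/\pi)^d\to(\lambda/\pi)^{d-e/2}$. The $(\tau,u')$-stationary phase contributes $\varsigma_T(x)^{-(d+1-e/2)}$ together with the oscillation $e^{i\sqrt{\lambda}(\theta_1-\theta_2)/\varsigma_T(x)}$, coming from the vertical linear part in the Shiffman--Zelditch universal expansion $\psi(x+\upsilon_1,x+\upsilon_2)=i(\theta_1-\theta_2)+\psi_2^{\mathbb{C}^d}(\mathbf{w}_1,\mathbf{w}_2)+O(|\mathbf{w}|^3)$ in HLC. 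The horizontal components of the quadratic part pair through the unitary action of $g_0\in G_m^X$ on $T_mM$, with Jacobian $A_{g_0}$ in HLC, producing $e^{Q_{\mathrm{h}}^T(\mathbf{w}_1,A_{g_0}\mathbf{w}_2)}$; summing over $g_0$ with weight $\overline{\chi_\varpi(g_0)}/|G_m^X|$ is exactly the finite residue from the $G$-localization. Taylor expanding the amplitude and the subprincipal phase in powers of $1/\sqrt{\lambda}$ produces the formal series $S_g\sim 1+\sum_l\lambda^{-l/2}F_{gl}$; the polynomial degree bound $\deg F_{gl}\le 11l$ is obtained by balancing cubic and higher phase remainders of order $\lambda^{-l/2}\|\upsilon\|^{\deg F_{gl}}$ against the admissible support radius $\|\upsilon_j\|\le C\lambda^{1/24}$.

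The principal obstacle is the multi-parameter stationary phase analysis with coupled scalings: $\lambda$ enters both through the BdM phase variable $u$ and through the oscillation $e^{-i\lambda\tau}$, while $1/\sqrt{\lambda}$ rescales both the displacements $\upsilon_j$ and the Lie algebra element $\xi$. Computing the full Hessian determinant to extract the exact prefactor $A_\varpi^T(x)$ (with the correct $V_{\mathrm{eff}}^X$ normalization), cleanly decoupling the horizontal and vertical--transverse quadratic contributions to obtain the tensor factorization $e^{Q_{\mathrm{h}}^T}\cdot e^{Q_{\mathrm{vt}}^T}$, and verifying the degree bound $\le 11l$ on each Taylor term are delicate, particularly because the wavefront analysis of $U_T(\tau)=\Pi\circ e^{i\tau Q}\circ\Pi$ must be carried out in the Hermite-type calculus of \cite{bdm-g} rather than in the ordinary H\"ormander calculus, so that all compositions remain legitimate oscillatory integrals with a single well-defined large parameter.
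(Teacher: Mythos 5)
Your proposal follows essentially the same strategy as the paper's proof: Peter--Weyl decomposition of the isotypical projector, the Boutet de Monvel--Sj\"ostrand parametrix for $\Pi$ together with the FIO parametrix for $e^{i\tau Q}$ (using $[\Pi,Q]=0$ so only a single $\Pi$ survives in the composition), localization of the group integration to a shrinking neighborhood of the finite stabilizer $G_m^X$, the rescaling by $1/\sqrt{\lambda}$, stationary phase in the phase variables followed by an explicit Gaussian integral over the Lie-algebra coordinate $\nu$ (which is indeed non-oscillatory on $X'$ since $\Phi(m)=\mathbf{0}$), and finally the identification of the resulting exponent with $Q_{\mathrm{vt}}^T$ and $Q_{\mathrm{h}}^T$ and the prefactor via $\det(C_m)=|G_m^M|V_{\mathrm{eff}}^M(m)$ and Remark \ref{rem:horizontality_effective}. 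The only places where you gloss over content the paper works out explicitly are the Hessian computation (determinant, signature, and inverse of $\mathrm{Hess}(K_\nu^{(\ell)})$ at the critical point) and the degree bound $\deg F_{gl}\le 11l$, which in the paper follows from a careful bookkeeping of the degrees in the stationary phase expansion (the operators $L_j$, the homogeneities of $H_\nu^{(\ell)}$ and $R_K^{(3)}$, and the degree-$3k$ polynomials $P_k$), not merely from balancing the cubic remainder against $\lambda^{1/24}$.
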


Let us consider the special case where $\upsilon_1=\upsilon_2=(0,\mathbf{w})$,
where $\mathbf{w}=\mathbf{w}_\mathrm{t}\in N_m=T_mM_{\mathrm{trasv}}\cong \mathbb{R}^{e}_{\mathrm{trasv}}$
in
the notation of (\ref{eqn:tangent_component_decomposition})
and (\ref{eqn:explicit_direct_sum_decomposition}). 

\begin{defn}
 \label{defn:a_varpi_Phi}
Let us define a function $a_{\Phi,\varpi}:M'\rightarrow \mathbb{R}$ by setting
\begin{equation}
 \label{eqn:character_L2_product}
a_{\Phi,\varpi}(m)=:\frac{1}{\big|G^X_m\big|}\cdot
\sum_{g\in G^X_m}\,\chi_\varpi(g)=\left\langle \left.\chi_\varpi\right|_{G^X_m},1\right\rangle_{L^2(G^X_m)}.
\end{equation}
Note that $a_{\Phi,\varpi}$ is real, as by unitarity $\overline{\chi_\varpi(g)}=\chi_\varpi\big(g^{-1}\big)$,
for any $g\in G$. Also,
there is a dense open subset of $M''\subseteq M'$ on which the conjugacy class of $G^X_m$ is constant
\cite{ggk},
hence $a_{\Phi,\varpi}$ is constant on $M''$;
we shall denote by $a_{\mathrm{gen}}(\Phi,\varpi)\in \mathbb{R}$ the constant value it takes on $M''$.
\end{defn}

For example, if $\widetilde{\mu}$ is generically free on $X'$, then $a_{\mathrm{gen}}(\Phi,\varpi)=\dim(V_\varpi)$.

\begin{cor}
\label{cor:local_weyl_law}
 Under the same assumptions of the Theorem, uniformly in $\mathbf{w}=\mathbf{w}_t\in N_m$
with $\|\mathbf{w}\|\le C\,\lambda^{1/24}$ we have
\begin{eqnarray*}
 \lefteqn{S_{\chi\cdot e^{-i\lambda(\cdot)}}^{(\varpi)}
 \left(x+\frac{\mathbf{w}}{\sqrt{\lambda}},x+\frac{\mathbf{w}}{\sqrt{\lambda}}\right)}\\
& \sim& 2\pi\cdot A^T_{\varpi}(x)\,a_{\Phi,\varpi}(m)\,
\left(\frac{\lambda}{\pi}\right)^{d-e/2}\cdot  
\exp\left(-\frac{2}{\varsigma_T(x)}\,\|\mathbf{w}\|^2\right)\\
&&\cdot\left[1+\sum_{j\ge 1}\lambda^{-j/2}\,F_j(x,\mathbf{w})\right], 
 \end{eqnarray*}
where $F_j$ is a polynomial in $\mathbf{w}$ of degree $\le 11j$, of the same parity as $j$
(also depending on $T$).
\end{cor}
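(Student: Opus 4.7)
The plan is to derive the corollary by direct specialization of Theorem~\ref{thm:local_scaling_asymtpotics} to $\upsilon_1=\upsilon_2=(0,\mathbf{w})$ with $\mathbf{w}=\mathbf{w}_{\mathrm{t}}\in T_mM_{\mathrm{trasv}}$, simplifying each of the geometric prefactors in turn. First, since $\theta_1=\theta_2=0$, the oscillatory factor $e^{i\sqrt{\lambda}(\theta_1-\theta_2)/\varsigma_T(x)}$ collapses to $1$. Second, because $\mathbf{w}$ is purely transverse, $\mathbf{w}_{\mathrm{h}}=\mathbf{w}_{\mathrm{v}}=0$ and $\mathbf{w}_{\mathrm{t}}=\mathbf{w}$; the symplectic terms in the definition of $Q_{\mathrm{vt}}$ cancel, and I obtain
$$
Q^T_{\mathrm{vt}}(\mathbf{w},\mathbf{w})=\frac{1}{\varsigma_T(x)}\bigl(-\|\mathbf{w}\|^2-\|\mathbf{w}\|^2\bigr)=-\frac{2}{\varsigma_T(x)}\,\|\mathbf{w}\|^2,
$$
which yields the Gaussian factor in the corollary.

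Next I would treat the sum over the stabilizer. Since $\Phi$ is $G$-equivariant, $G$ preserves $M'=\Phi^{-1}(\mathbf{0})$, so for $g\in G^X_m\subseteq G^M_m$ the unitary $A_g=d_m\mu^M_g$ preserves $T_mM'$ and, being a K\"ahler isometry, also the orthogonal complement $N_m=T_mM_{\mathrm{trasv}}$. Hence $A_g\mathbf{w}\in T_mM_{\mathrm{trasv}}$, so $(A_g\mathbf{w})_{\mathrm{h}}=0$; combined with $\mathbf{w}_{\mathrm{h}}=0$ this gives $Q^T_{\mathrm{h}}(\mathbf{w},A_g\mathbf{w})=\psi_2^M(0,0)/\varsigma_T(x)=0$, so every exponential in the stabilizer sum equals $1$. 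The sum in Theorem~\ref{thm:local_scaling_asymtpotics} thus collapses to
$$
\frac{1}{|G^X_m|}\sum_{g\in G^X_m}\overline{\chi_\varpi(g)}\,S_g(\lambda,x,\upsilon,\upsilon).
$$
The leading ($\lambda^0$) term is $\tfrac{1}{|G^X_m|}\sum_g\overline{\chi_\varpi(g)}$; reindexing $g\mapsto g^{-1}$ and using $\overline{\chi_\varpi(g)}=\chi_\varpi(g^{-1})$ identifies it with the real number $a_{\Phi,\varpi}(m)$. Factoring $a_{\Phi,\varpi}(m)$ out then produces, at each order $\lambda^{-j/2}$, a polynomial $F_j(x,\mathbf{w})$ of degree $\le 11j$, inherited from the degree bound on the $F_{gj}$ of Theorem~\ref{thm:local_scaling_asymtpotics}.

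The main obstacle is the parity statement that $F_j$ has the same parity as $j$ in $\mathbf{w}$. Reality of $F_j$ is automatic from the Hermitian symmetry $S^{(\varpi)}(y,z)=\overline{S^{(\varpi)}(z,y)}$ evaluated on the diagonal, but parity requires revisiting the stationary-phase derivation of Theorem~\ref{thm:local_scaling_asymtpotics}: the half-integer power $\lambda^{-l/2}$ is generated by $l$ applications of the Hessian inverse, equivalently $l$ derivatives of the amplitude, at the critical point, and once one specializes to a transverse displacement $\upsilon=(0,\mathbf{w})$ on the diagonal the surviving phase and leading amplitude are even in $\mathbf{w}$, so the $l$-th contribution inherits the parity of $l$. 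This parity is preserved by the finite average over $g\in G^X_m$ because each $A_g$ acts as a unitary automorphism of $N_m$ and hence maps polynomials of pure parity to polynomials of the same parity. Everything else in the corollary is a routine substitution, so the bookkeeping of this parity is the only point requiring genuine care.
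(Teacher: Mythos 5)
Your derivation is correct and follows the natural specialization of Theorem~\ref{thm:local_scaling_asymtpotics} that the paper intends: the simplifications of $Q^T_{\mathrm{vt}}(\mathbf{w},\mathbf{w})$ and $Q^T_{\mathrm{h}}(\mathbf{w},A_g\mathbf{w})$ are right, the identification of the leading constant with $a_{\Phi,\varpi}(m)$ via $\overline{\chi_\varpi(g)}=\chi_\varpi(g^{-1})$ is correct, and you correctly flag the parity claim as the one point that requires tracing back into the stationary-phase derivation rather than quoting the theorem's statement verbatim (the paper supplies this via its internal Lemma~\ref{lem:parity} and the subsequent degree/parity bookkeeping). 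One tiny quibble: $A_g$ restricted to $N_m$ is orthogonal rather than unitary, since $N_m$ is a totally real (not complex) subspace, but this does not affect the parity argument, which only needs linearity.
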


The Theorem yields information about the asymptotic concentration behavior of the equivariant eigenfunctions of $T$;
from this, one obtains by integration an estimate on the asymptotic distribution of the eigenvalues $\lambda_j^{(\varpi)}$.
Let us define:
\begin{equation}
 \label{eqn:Gamma_Phi_T}
\Gamma(\Phi,\varsigma_T)=:\int_{X'}\frac{1}{V^X_{\mathrm{eff}}(m)}\,\varsigma_T(x)^{-(d-e+1)}\,\mathrm{d}V_{X'}(x),
\end{equation}
where $m=\pi(x)$, and
$\mathrm{d}V_{X'}$ is the induced volume form on $X'$. 
\begin{cor}
\label{cor:trace_asymptotics}
 Under the same assumptions, there is an asymptotic expansion
\begin{eqnarray*}
 \sum_j\widehat{\chi}\big(\lambda-\lambda_j^{(\varpi)}\big)&\sim& 2\pi\,\dim(V_\varpi)\,a_{\mathrm{gen}}(\Phi,\varpi)
\,\Gamma (\Phi,\varpi)\,
\left(\frac{\lambda}{\pi}\right)^{d-e}\\
&&\cdot\left[1+\sum_{j\ge 0}\lambda^{-j}\,E_j\right],
\end{eqnarray*}

\end{cor}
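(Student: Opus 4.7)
\medskip

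\noindent\textbf{Proof proposal for Corollary 1.4.}
The starting point is the identity
\[
\sum_j\widehat{\chi}\bigl(\lambda-\lambda_j^{(\varpi)}\bigr)
=\mathrm{trace}\!\left(S_{\chi\cdot e^{-i\lambda(\cdot)}}^{(\varpi)}\right)
=\int_X S_{\chi\cdot e^{-i\lambda(\cdot)}}^{(\varpi)}(x,x)\,\mathrm{d}V_X(x).
\]
The plan is to evaluate this integral asymptotically by localizing to a shrinking tubular neighborhood of $X'$ and then applying the pointwise on-diagonal expansion of Corollary 1.3. By Theorem 1.2(2), for any $C>0$ the contribution from $\{x\in X:\mathrm{dist}_X(x,X')\ge C\lambda^{-11/24}\}$ is $O(\lambda^{-\infty})$, and so it suffices to integrate over a tube $U_\lambda$ of radius slightly larger than $\lambda^{-11/24}$ around $X'$.

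On $U_\lambda$ I would use the tubular neighborhood diffeomorphism provided by the normal exponential map to $X'$ in $X$: every $x\in U_\lambda$ is written uniquely as $x=x_0+\mathbf{n}$ with $x_0\in X'$ and $\mathbf{n}$ a small transverse vector, which (since the $G$-action is horizontal) corresponds under HLC centered at $x_0$ to a vector of the form $(0,\mathbf{w}_{\mathrm{t}})\in \mathbb{R}^e_{\mathrm{trasv}}$. Rescaling by $\mathbf{w}_{\mathrm{t}}=\sqrt{\lambda}\,\mathbf{n}$ and absorbing the Jacobian of the tubular neighborhood map (which equals $1$ to leading order plus polynomial corrections in $\mathbf{w}_{\mathrm{t}}/\sqrt{\lambda}$), the volume form becomes
\[
\mathrm{d}V_X(x)=\lambda^{-e/2}\,\mathrm{d}\mathbf{w}_{\mathrm{t}}\,\mathrm{d}V_{X'}(x_0)\cdot\bigl[1+O\bigl(\lambda^{-1/2}\bigr)\bigr].
\]
For $\|\mathbf{w}_{\mathrm{t}}\|\le C\lambda^{1/24}$ I plug in the expansion of Corollary 1.3:
\[
S_{\chi\cdot e^{-i\lambda(\cdot)}}^{(\varpi)}\!\left(x_0+\tfrac{\mathbf{w}_{\mathrm{t}}}{\sqrt\lambda},x_0+\tfrac{\mathbf{w}_{\mathrm{t}}}{\sqrt\lambda}\right)
\sim 2\pi\,A^T_\varpi(x_0)\,a_{\Phi,\varpi}(m_0)\,\Bigl(\tfrac{\lambda}{\pi}\Bigr)^{d-e/2}
e^{-2\|\mathbf{w}_{\mathrm{t}}\|^2/\varsigma_T(x_0)}\bigl[1+\textstyle\sum_j\lambda^{-j/2}F_j\bigr].
\]
Then the integral factors, to leading order, into a Gaussian integral in $\mathbf{w}_{\mathrm{t}}\in \mathbb{R}^e$ and an integral over $X'$.

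Carrying out the Gaussian integral,
\[
\int_{\mathbb{R}^e}e^{-2\|\mathbf{w}_{\mathrm{t}}\|^2/\varsigma_T(x_0)}\,\mathrm{d}\mathbf{w}_{\mathrm{t}}
=\Bigl(\tfrac{\pi\,\varsigma_T(x_0)}{2}\Bigr)^{e/2},
\]
and combining with the prefactor $2^{e/2}\varsigma_T(x_0)^{-(d+1-e/2)}$ inside $A^T_\varpi(x_0)$, with the volume factor $\lambda^{-e/2}$, and with $(\lambda/\pi)^{d-e/2}$, the powers of $\pi$, $2$ and $\varsigma_T(x_0)$ collapse to $\pi^{-(d-e)}\cdot \varsigma_T(x_0)^{-(d-e+1)}$. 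Since $a_{\Phi,\varpi}$ equals $a_{\mathrm{gen}}(\Phi,\varpi)$ on the dense open $M''\subseteq M'$ (Definition 1.6), and the complement of $M''$ in $M'$ has measure zero, the integral over $X'$ produces exactly $a_{\mathrm{gen}}(\Phi,\varpi)\,\Gamma(\Phi,\varsigma_T)$. The subleading terms in the expansion contribute powers $\lambda^{-j}$ after the Gaussian integration kills odd powers of $\mathbf{w}_{\mathrm{t}}$ by parity (this is where the parity statement in Corollary 1.3 is essential), yielding the series $\sum_j\lambda^{-j}E_j$.

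The main obstacle I expect is uniformity. Corollary 1.3 gives an expansion uniform in $\|\mathbf{w}\|\le C\lambda^{1/24}$, whereas I am integrating $\mathbf{w}_{\mathrm{t}}$ over all of $\mathbb{R}^e$. I need to justify splitting $\int_{\mathbb{R}^e}=\int_{\|\mathbf{w}_{\mathrm{t}}\|\le C\lambda^{1/24}}+\int_{\|\mathbf{w}_{\mathrm{t}}\|>C\lambda^{1/24}}$ and show that the second piece is $O(\lambda^{-\infty})$: the first integrand, because of the $e^{-2\|\mathbf{w}_{\mathrm{t}}\|^2/\varsigma_T(x_0)}$ Gaussian decay, produces exponentially small tails; the second, by Theorem 1.2(2) applied to the unrescaled point $x_0+\mathbf{n}$ with $\|\mathbf{n}\|\ge C\lambda^{1/24-1/2}=C\lambda^{-11/24}$, is itself $O(\lambda^{-\infty})$. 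A secondary technical point is to check that the Jacobian of the tubular exponential and the discrepancy between the HLC parametrization and the normal geodesic to $X'$ contribute only polynomial corrections in $\mathbf{w}_{\mathrm{t}}$ that are absorbed into the $F_j$'s; this is a routine Taylor expansion combined with the degree bound $\le 11j$ from Theorem 1.1, ensuring the resulting coefficients $E_j$ are finite.
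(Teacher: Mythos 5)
Your proposal is correct and follows the same essential strategy as the paper: localize the trace integral to a shrinking tube around $X'$ via Theorem~1.2(2), parametrize the tube transversally, rescale by $\sqrt\lambda$, substitute the on-diagonal expansion of Corollary~1.3, and carry out the Gaussian integral in the transverse variable, with the parity of the $F_j$ killing the half-integer powers. The one technical divergence is that the paper sidesteps the "discrepancy between HLC and normal exponential" issue you flag: instead of the normal exponential map it builds $S^1$-equivariant local diffeomorphisms $\varLambda_j(x,\mathbf{w})=x+\mathbf{w}_{\mathrm{t}}$ directly from smoothly varying families of HLC (so Corollary~1.3 applies verbatim), and it glues these via a finite partition of unity $(\varrho_j)$ on a cover of $X'$, with the Jacobian $\mathcal{E}_j$ normalized so that $\mathcal{E}_j(x,\mathbf{0})=1$ along $X'$; your approach would require checking that passing between the two transverse parametrizations only adjusts coefficients at subleading order, which is true but is exactly the step the paper avoids needing. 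Your explicit discussion of the Gaussian-tail versus $O(\lambda^{-\infty})$ splitting is a point the paper leaves implicit, and your numerology for the prefactors and the final power $(\lambda/\pi)^{d-e}$ is correct.
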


The estimate in Corollary \ref{cor:trace_asymptotics} leads by a classical Tauberian argument
to an estimate on the $\varpi$-equivariant counting function of $T$, defined as 

\begin{equation}
 \label{eqn:counting_function}
N_T^{(\varpi)}(\lambda)=:\sharp\left\{j:\,\lambda_j^{(\varpi)}\le \lambda\right\}.
\end{equation}

\begin{cor}
 \label{cor:weyl_law}
As $\lambda\rightarrow+\infty$, one has
$$
N_T^{(\varpi)}(\lambda)=
\frac{\pi}{d-e+1}\cdot \dim(V_\varpi)\,a_{\mathrm{gen}}(\Phi,\varpi)\,\Gamma (\Phi,\varpi)\, 
\left(\frac{\lambda}{\pi}\right)^{d-e+1}
+O\left(\lambda^{d-e}\right).
$$
\end{cor}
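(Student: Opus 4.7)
The plan is to deduce this Weyl law from Corollary \ref{cor:trace_asymptotics} by a standard H\"{o}rmander-type Tauberian argument. The counting function $N_T^{(\varpi)}$ is monotone nondecreasing, tempered, and vanishes for sufficiently negative $\lambda$, since the spectrum of $T$ is bounded below. Its distributional derivative is the discrete spectral measure $dN_T^{(\varpi)}=\sum_j \delta_{\lambda_j^{(\varpi)}}$, and the trace appearing in Corollary \ref{cor:trace_asymptotics} is precisely the convolution
\[
\big(\widehat\chi * dN_T^{(\varpi)}\big)(\lambda)\;=\;\sum_j \widehat\chi\!\left(\lambda-\lambda_j^{(\varpi)}\right).
\]

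First I would record that the good $\epsilon$-cut-off condition supplies exactly the two ingredients needed for the Tauberian step: the positivity $\widehat\chi\ge 0$, which prevents oscillatory cancellation and makes a one-sided conclusion on $N_T^{(\varpi)}$ legitimate, together with the normalization $\int_{\mathbb{R}}\widehat\chi\,d\lambda=2\pi\,\chi(0)=2\pi$ coming from Fourier inversion. At the same time, Theorem \ref{thm:rapid_decay}(1) guarantees that the smoothed trace is $O(\lambda^{-\infty})$ as $\lambda\to -\infty$, so the boundary/negative-$\lambda$ hypothesis of the Tauberian lemma is automatically met; no contribution from that end pollutes the estimate.

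Next, I would invoke the classical Tauberian lemma (as in H\"{o}rmander, Vol.\ III, Thm.\ 17.5.6, or in equivalent form in Safarov--Vassiliev): if $N$ is nondecreasing, tempered, zero near $-\infty$, and $(\widehat\chi * dN)(\lambda)= c\,\lambda^{k}+O(\lambda^{k-1})$ with $c>0$ and $k\ge 0$ for a good cut-off $\chi$, then
\[
N(\lambda)\;=\;\frac{c}{(k+1)\,\int\widehat\chi}\,\lambda^{k+1}\;+\;O\!\left(\lambda^{k}\right).
\]
Reading off from Corollary \ref{cor:trace_asymptotics} the leading coefficient $c=2\pi\,\dim(V_\varpi)\,a_{\mathrm{gen}}(\Phi,\varpi)\,\Gamma(\Phi,\varpi)\,\pi^{-(d-e)}$ and the exponent $k=d-e$, and dividing by $(d-e+1)\cdot 2\pi$, one obtains precisely the stated coefficient $\tfrac{\pi}{d-e+1}\,\dim(V_\varpi)\,a_{\mathrm{gen}}(\Phi,\varpi)\,\Gamma(\Phi,\varpi)\,(\lambda/\pi)^{d-e+1}$, with remainder $O(\lambda^{d-e})$.

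The conceptual content is entirely routine; the only point that requires genuine care is bookkeeping with the $2\pi$ Fourier-convention factors, to ensure that the $2\pi$ hidden in $\int \widehat\chi$ cancels correctly against the $2\pi$ in the leading coefficient of Corollary \ref{cor:trace_asymptotics} to produce the factor $\pi$ (rather than $2\pi$) in the Weyl constant. Once the convention is fixed consistently across the statement of the Tauberian lemma and the inputs from the previous corollaries, the formula falls out from a single integration and no further analysis is required.
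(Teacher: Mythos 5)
Your proposal is correct and is essentially the same argument the paper carries out: the paper explicitly runs the Grigis--Sj\"ostrand/H\"ormander Tauberian computation (bounding the difference quotient of $N_T^{(\varpi)}$, then computing $\int_{-\infty}^{\lambda}(\widehat\chi*dN_T^{(\varpi)})$ in two ways), while you invoke the packaged Tauberian lemma whose proof is precisely that computation. Your bookkeeping of the constants ($\int\widehat\chi=2\pi$, $k=d-e$) reproduces the stated coefficient, and the hypotheses you check (monotonicity, $\widehat\chi\ge 0$, vanishing of $N_T^{(\varpi)}$ near $-\infty$, the leading-order asymptotics from Corollary \ref{cor:trace_asymptotics}) are exactly what the paper's explicit argument uses.
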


In the spirit of \cite{hor} and \cite{bdm-g}, the leading asymptotics of
$N_T^{(\varpi)}(\lambda)$ may be related to an appropriate symplectic volume. 

Just to fix ideas, let us
make the symplifying assumption that $\mu^M$ is free on $M'$, leaving it to the interested reader to
consider the general case. Then the symplectic quotient
$\widehat{M}=:M'/G$, with the induced symplectic structure $\widehat{\omega}$, 
is a Hodge manifold in a natural manner \cite{guillemin-sternberg}. Furthermore, $\widehat{X}=:X'/G$
is the unit circle bundle on $\widehat{M}$ for the positive line bundle $\widehat{A}$
induced by $A$ on $\widehat{M}$ by passage to the quotient. Let $\widehat{\alpha}$ be the connection form of the latter,
and denote by $\widehat{\Sigma}$ the cone in $T^*\widehat{X}$ sprayed by $\widehat{\alpha}$.
Then $\widehat{\Sigma}=\Sigma'/G$, where $\Sigma'$ is the restriction of $\Sigma$ to $X'$.

In addition, being $\widetilde{\mu}$-invariant, $\varsigma_T$ descends to a $\mathcal{C}^\infty$
function on $\widehat{X}$, and similarly $\sigma_T$ descends to a $\mathcal{C}^\infty$ homogeneous
function $\widehat{\sigma}_T$ on $\widehat{\Sigma}$. 

\begin{cor}
 \label{cor:weyl_law_volume}
Let $\widehat{\Sigma}_1\subseteq \widehat{\Sigma}$
be the locus where $\widehat{\sigma}_T\le 1$.  
Then, under the previous assumptions, as $\lambda\rightarrow+\infty$, one has
$$
N_T^{(\varpi)}(\lambda)=
\dim(V_\varpi)^2\,\mathrm{vol}\left(\widehat{\Sigma}_1\right)\, 
\left(\frac{\lambda}{2\,\pi}\right)^{d-e+1}
+O\left(\lambda^{d-e}\right),
$$
\end{cor}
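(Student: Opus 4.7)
The plan is to deduce Corollary \ref{cor:weyl_law_volume} from Corollary \ref{cor:weyl_law} by rewriting the two $\varpi$-dependent constants $a_{\mathrm{gen}}(\Phi,\varpi)$ and $\Gamma(\Phi,\varsigma_T)$ in the symplectically reduced picture, and then checking that the numerical factors telescope to $(2\pi)^{-(d-e+1)}$.

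First I would observe that under the freeness assumption on $\mu^M$, the normal subgroup $G^X_m \trianglelefteq G^M_m = \{e\}$ is trivial for every $m\in M'$. Hence $\widetilde{\mu}$ acts freely on the whole of $X'$ (not only generically), so the remark following Definition \ref{defn:a_varpi_Phi} applies everywhere and gives $a_{\Phi,\varpi}\equiv a_{\mathrm{gen}}(\Phi,\varpi)=\dim(V_\varpi)$. This replaces the factor $\dim(V_\varpi)\,a_{\mathrm{gen}}(\Phi,\varpi)$ in Corollary \ref{cor:weyl_law} with $\dim(V_\varpi)^2$.

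Next, the main computational step is to express $\mathrm{vol}\left(\widehat{\Sigma}_1\right)$ in terms of $\Gamma(\Phi,\varsigma_T)$. Since $\varsigma_T$ is $G$-invariant, it descends to $\widehat{\varsigma}_T\in \mathcal{C}^\infty(\widehat{X})$, and $\widehat{\sigma}_T(\hat{x},r\widehat{\alpha}_{\hat{x}})=r\,\widehat{\varsigma}_T(\hat{x})$ by homogeneity. Parametrizing $\widehat{\Sigma}$ by $(\hat{x},r)\in \widehat{X}\times \mathbb{R}_{>0}$ via $\hat{\xi}=r\widehat{\alpha}_{\hat{x}}$, the canonical symplectic form on $T^*\widehat{X}$ pulls back to $dr\wedge \widehat{\alpha}+2r\,\widehat{\pi}^*(\widehat{\omega})$, and a direct power computation together with $\widehat{\omega}^{\wedge(d-e)}/(d-e)!=\mathrm{d}V_{\widehat{M}}$ and $\widehat{\alpha}\wedge \widehat{\pi}^*\mathrm{d}V_{\widehat{M}}=2\pi\,\mathrm{d}V_{\widehat{X}}$ yields
\[
\mathrm{vol}\!\left(\widehat{\Sigma}_1\right)=\frac{2\pi\cdot 2^{d-e}}{d-e+1}\,\int_{\widehat{X}}\widehat{\varsigma}_T(\hat{x})^{-(d-e+1)}\,\mathrm{d}V_{\widehat{X}}(\hat{x}).
\]
Then, using that $X'\to\widehat{X}$ is a principal $G$-bundle whose fibres are the $G$-orbits with volume $V^X_{\mathrm{eff}}(m)$, fibre integration of the $G$-invariant integrand $\varsigma_T(x)^{-(d-e+1)}/V^X_{\mathrm{eff}}(m)$ identifies the remaining integral with $\Gamma(\Phi,\varsigma_T)$ as defined in (\ref{eqn:Gamma_Phi_T}), giving
\[
\Gamma(\Phi,\varsigma_T)=\frac{d-e+1}{2\pi\cdot 2^{d-e}}\,\mathrm{vol}\!\left(\widehat{\Sigma}_1\right).
\]

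Finally I would substitute this and $a_{\mathrm{gen}}(\Phi,\varpi)=\dim(V_\varpi)$ into Corollary \ref{cor:weyl_law}; the prefactors $\pi/(d-e+1)$, $(d-e+1)/(2\pi\cdot 2^{d-e})$, and $\pi^{-(d-e+1)}$ combine to $(2\pi)^{-(d-e+1)}$, yielding the stated leading term $\dim(V_\varpi)^2\,\mathrm{vol}(\widehat{\Sigma}_1)\,(\lambda/2\pi)^{d-e+1}$, while the remainder $O(\lambda^{d-e})$ is inherited unchanged. The only real obstacle is the symplectic-volume computation on $\widehat{\Sigma}$: one has to be careful with the normalizations implicit in the chain of volume forms (the $1/2\pi$ in $\mathrm{d}V_X$, the factor $2$ in $\mathrm{d}\alpha=2\pi^*\omega$, and the fibre-integration normalization for the Riemannian submersion $X'\to\widehat{X}$), since any misnormalization would spoil the clean form of the final constant.
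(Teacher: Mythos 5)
Your proposal is correct and follows essentially the same route as the paper: replace $a_{\mathrm{gen}}(\Phi,\varpi)$ by $\dim(V_\varpi)$ using freeness of the action, rewrite $\Gamma(\Phi,\varsigma_T)$ as an integral over $\widehat{X}$ by fibre integration along the $G$-orbits, compute the symplectic volume of $\widehat{\Sigma}_1$ by expanding $\omega_{\widehat{\Sigma}}^{\wedge(d-e+1)}$ and integrating in $r$, and then substitute into Corollary \ref{cor:weyl_law}. Your identity $\mathrm{vol}(\widehat{\Sigma}_1)=\frac{2\pi\cdot 2^{d-e}}{d-e+1}\int_{\widehat{X}}\widehat{\varsigma}_T^{-(d-e+1)}\,\mathrm{d}V_{\widehat{X}}$ agrees with the paper's $\frac{2^{d-e+1}\pi}{d-e+1}\,\Gamma(\Phi,\varpi)$, and the numerical prefactors telescope exactly as you describe.
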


As in the case of \cite{p_weyl}, the arguments in this paper combine the classical approach to
trace formulae and Weyl laws for pseudodifferential operators \cite{dg}, \cite{hor}, \cite{gr_sj}
with the microlocal theory of the Szeg\"{o} kernel \cite{f}, \cite{bdm_sj}, and especially its
description as an FIO with complex phase function in the latter article. This follows the philosophy
in \cite{sz} and \cite{zel_tian}, where the theory of \cite{bdm_sj} is specialized to the case of
of algebro-geometric Szeg\"{o} kernels, and in fact we shall extensively build on ideas ad techniques from the
latter papers. In order to include symmetries                
in this picture,
and describe how the local contribution to the equivariant trace formula
asymptotically concentrates near the zero locus of the moment map, we shall furthermore adapt the approach 
and techniques in \cite{p_JSG}, \cite{pao_IJM}, \cite{pao_JMP}.

\section{Preliminaries}

\subsection{Smoothings of wave operators}

We collect here some well-known basic facts about smoothed averages of wave operators of the
form $e^{i\tau Q}$, and their T\"{o}plitz counterparts (\cite{gr_sj}, \cite{bdm-g})

Let $T$ be a $G$-invariant first order self-adjoint T\"{o}plitz operator with $\varsigma_T>0$.
By the theory of \cite{bdm-g}, there exists a $G$-invariant first order elliptic self-adjoint
pseudo-differential operator of classical type $Q$ on $X$, such that 
\begin{equation}
 \label{eqn:special_Q}
 [\Pi,Q]=0,\,\,\,\,\,\sigma_Q>0,\,\,\,\,\,T=\Pi\circ Q\circ \Pi.
\end{equation}
In fact, such a $Q$ exists by Lemma 12.1 of \cite{bdm-g}, 
and averaging over $G$ yields invariance. In particular, $T$ is the restriction of $Q$ to $H(X)$.

Let $U(\tau)=:e^{i\tau Q}$, $U_T(\tau)=:e^{i\tau T}$; thus 
$U_T(\tau)=\Pi\circ U(\tau)\circ \Pi=U(\tau)\circ \Pi$.
Let $\beta_1\le \beta_2\le \cdots$ be the eigenvalues of $Q$, repeated according to multpicity,
and let $(f_j)$ be a complete orthonormal system of $L^2(X)$ with $Q(f_j)=\beta_j\,f_j$; then
the same holds of $\left(f_j^g\right)$ for any $g\in G$, where $f^g=f\circ \mu^X_{g^{-1}}$. 
It follows that the
distributional kernel $U(\tau)\in \mathcal{D}'(X\times X)$ satisfies
\begin{eqnarray*}
\label{eqn:U_tau_distrib}
U(\tau)(x,y)&=&U(\tau)\left(\mu^X_g(x),\mu^X_g(y)\right)
\end{eqnarray*}
for any $g\in G$.
The same considerations hold with $U_T(\tau)$ in place of $U(\tau)$.

If $\chi\in \mathcal{S}(\mathbb{R})$, the averaged operator 
\begin{equation}
 \label{eqn:smoothed_wave_operator}
U_\chi=:\int_{-\infty}^{+\infty}\chi (\tau)\,U(\tau)\,\mathrm{d}\tau
\end{equation}
is $\mathcal{C}^\infty$, with Schwartz kernel the 
series
$$
U_\chi(x,y)=\sum_j\widehat{\chi}(-\beta_j)\,f_j(x)\cdot\overline{f_j(y)},
$$
which converges uniformly and absolutely in $\mathcal{C}^\infty(X\times X)$ \cite{gr_sj}. The sequences 
$(\lambda_j)$'s and $\big(\lambda_j^{(\varpi)}\big)$
in (\ref{eqn:average_wave_operator}) and (\ref{eqn:equivariant_average_wave_operator})
are subsequences of $(\beta_j)$, and we may assume without loss that the same holds for the corresponding eigenfunctions. 
Similar conclusions
therefore hold for $S_{\chi}$ in (\ref{eqn:average_wave_operator}) and (\ref{eqn:kernel_average_wave_operator}),
and its equivariant drifting counterpart (\ref{eqn:traveling_average_wave_operator}) and (\ref{eqn:equivariant_average_wave_operator}).

Furthermore, $U(\tau)$ is an FIO associated to the Hamiltonian flow $\phi_\tau^{T^*X}$ of $\sigma_Q$ on $T^*X$
\cite{dg}, \cite{gr_sj}.
More precisely, 
for $\tau\sim 0$ and in the neighborhood of the diagonal in $X\times X$
we have $U(\tau)=V(\tau)+R(\tau)$, where $R(\tau)$ is a smoothing operator, while in local coordinates
$V(\tau)$ has the form
\begin{equation}
 \label{eqn:microlocal_V_tau}
V(\tau)(x,y)=\dfrac{1}{(2\pi)^{2d+1}}\,\int_{\mathbb{R}^{2d+1}}\,e^{i[\varphi(\tau,x,\eta)-\langle y,\eta\rangle]}
\,a(\tau,x,y,\eta)\,\mathrm{d}\eta,
\end{equation}
where
the phase and amplitude are as follows. First, $\varphi(\tau,\cdot,\cdot)$ is the generating function of $\phi_\tau^{T^*X}$,
and therefore it satisfies the Hamilton-Jacobi equation. Since $\phi_0^{T^*X}$ is the identity, 
for $\tau\sim 0$ we have
\begin{equation}
 \label{eqn:Hamilton_Jacobi}
\varphi(\tau,x,\eta)=\langle x,\eta\rangle+\tau\,\sigma_Q(x,\eta)+\|\eta\|\,O\left(\tau^2\right).
\end{equation}
On the other hand the amplitude is a classical symbol $a(\tau,\cdot,\cdot,\cdot)\in S^0_{\mathrm{cl}}$,
and $a(0,\cdot,\cdot,\cdot)=1/\mathcal{V}(y)$, where $\mathcal{V}(x)\,\mathrm{d}x$ is the local coordinate
expression of $\mathrm{d}V_X$ (see also the discussion in \cite{p_weyl}). 

\subsubsection{Cut-offs}
\label{sct:cut_offs}

We shall choose the cut-off function $\chi$ as follows.

Given $\epsilon>0$, choose $\gamma\in \mathcal{C}^\infty\big((-\epsilon/2,\epsilon/2)\big)$
real, simmetric and non-negative; in particular its Fourier transform $\widehat{\gamma}$
is real. After normalization, 
we may assume $\|\gamma\|_{L^2}=1$.
Setting $\chi=\gamma *\gamma$ (convolution) we obtain the following
well-known:

\begin{lem}
\label{lem:test_function}
 For any $\epsilon>0$ there exists $\chi\in \mathcal{C}^\infty_0\big((-\epsilon,\epsilon)\big)$ such that
 $\chi\ge 0$, $\chi(0)=1$, $\widehat{\chi}\ge 0$.
\end{lem}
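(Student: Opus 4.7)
\medskip

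The plan is to realize $\chi$ as the autoconvolution of a suitably normalized real, even, non-negative bump function supported in a half-interval, following the hint already in the text.

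First, I would fix $\epsilon>0$ and produce a function $\gamma\in \mathcal{C}^\infty_0\big((-\epsilon/2,\epsilon/2)\big)$ that is real-valued, even ($\gamma(-t)=\gamma(t)$), and non-negative; such a $\gamma$ exists by taking any smooth bump in $(-\epsilon/2,\epsilon/2)$, symmetrizing, and squaring (or simply by taking a standard mollifier). Rescaling by a positive constant if necessary, I would further arrange that $\|\gamma\|_{L^2(\mathbb{R})}=1$. Now set $\chi=:\gamma*\gamma$.

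Next I would check the four required properties. Since $\mathrm{supp}(\gamma*\gamma)\subseteq \mathrm{supp}(\gamma)+\mathrm{supp}(\gamma)\subseteq (-\epsilon,\epsilon)$, we get $\chi\in \mathcal{C}^\infty_0\big((-\epsilon,\epsilon)\big)$. Non-negativity is immediate: for any $t\in\mathbb{R}$,
\begin{equation*}
\chi(t)=\int_{\mathbb{R}}\gamma(\tau)\,\gamma(t-\tau)\,\mathrm{d}\tau\ge 0,
\end{equation*}
because the integrand is pointwise non-negative. The normalization $\chi(0)=1$ follows from evenness and $L^2$-normalization of $\gamma$:
\begin{equation*}
\chi(0)=\int_{\mathbb{R}}\gamma(\tau)\,\gamma(-\tau)\,\mathrm{d}\tau=\int_{\mathbb{R}}\gamma(\tau)^2\,\mathrm{d}\tau=\|\gamma\|_{L^2}^2=1.
\end{equation*}

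Finally, the Fourier-side positivity comes from the convolution theorem together with the fact that the Fourier transform of a real even function is real:
\begin{equation*}
\widehat{\chi}=\widehat{\gamma*\gamma}=\widehat{\gamma}\cdot \widehat{\gamma}=\widehat{\gamma}^{\,2}\ge 0,
\end{equation*}
since $\widehat{\gamma}(\xi)=\int \gamma(t)\cos(\xi t)\,\mathrm{d}t\in \mathbb{R}$ by evenness of $\gamma$. There is no real obstacle here: the only minor bookkeeping point is to remember to normalize $\gamma$ in $L^2$ rather than $L^1$, which is exactly what forces $\chi(0)=1$ via the autoconvolution identity.
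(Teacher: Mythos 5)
Your proof is correct and is essentially identical to the paper's: the paper also takes $\gamma\in\mathcal{C}^\infty_0\big((-\epsilon/2,\epsilon/2)\big)$ real, even, non-negative, normalized so that $\|\gamma\|_{L^2}=1$, and sets $\chi=\gamma*\gamma$. You have merely spelled out the verifications (support, pointwise non-negativity of the convolution, $\chi(0)=\|\gamma\|_{L^2}^2=1$, and $\widehat{\chi}=\widehat{\gamma}^{\,2}\ge 0$ since $\widehat{\gamma}$ is real for real even $\gamma$) which the paper leaves implicit.
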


With this choice, that $U_\chi$ is smoothing may also be seen using (\ref{eqn:microlocal_V_tau}) and (\ref{eqn:Hamilton_Jacobi})
to integrate by parts in $\mathrm{d}\tau$ in (\ref{eqn:smoothed_wave_operator}).

\subsection{Heisenberg local coordinates}
\label{scn:HLC} 
Let $\gamma_x\big((\theta,\mathbf{w}))=x+(\theta,\mathbf{w})$ be a system of Heisenberg local coordinates
on $X$ centered at $x$ \cite{sz}. Then the following holds:

\begin{enumerate}
 \item The standard circle action $r:S^1\times X\rightarrow X$ is expressed by a translation in the angular coordinate: where defined,
$$
r_{e^{i\vartheta}}\big(x+(\theta,\mathbf{w})\big)=x+(\vartheta+\theta,\mathbf{w}).
$$
\item If $m=\pi(x)$ let us set $m+\mathbf{w}=:\pi\big(x+(0,\mathbf{w})\big)$; then 
$\mathbf{w}\in  B_{2d}(\mathbf{0},\delta)\mapsto m+\mathbf{w}$ is a local coordinate chart centered at $m$, inducing
a unitary isomorphism $\mathbb{C}^d\cong T_mM$; in other words, $\omega_m$ and $J_m$ correspond to the standard complex and symplectic
structures on $\mathbb{R}^{2d}\cong \mathbb{C}^d$.
\item  
$\gamma_x$ induces at $x$ an isomorphism $\mathbb{R}\oplus \mathbb{R}^{2d}\cong T_xX$ compatible with the direct sum decomposition
$
T_xX\cong \mathcal{V}_x\oplus \mathcal{H}_x
$, that is, $$\mathcal{V}_x\cong \mathbb{R}\oplus(\mathbf{0}),\,\,\,\,\,\,\,\mathcal{H}_x\cong (0)\oplus \mathbb{R}^{2d}.$$
\item If we write $\mathbf{w}\in \mathbb{R}^{2d}$ as a complex vector $\mathbf{z}\in \mathbb{C}^{d}$, the local coordinate expression of
of $\alpha$ at $x+(\theta,\mathbf{z})$ is 
$$
\alpha=\mathrm{d}\theta+\sum_{j=1}^d \left(A_j(\mathbf{z},\overline{\mathbf{z}})\,\mathrm{d}\mathbf{z}_j+
\overline{A_j(\mathbf{z},\overline{\mathbf{z}})}\,\mathrm{d}\overline{\mathbf{z}}_j\right),
$$
where $A_j=-(i/2)\,\overline{z}_j+O\left(\|\mathbf{z}\|^2\right)$.
\end{enumerate}

\subsection{The Szeg\"{o} kernel}

The Szeg\"{o} projector of $X$ is the orthogonal projector
$\Pi:L^2(X)\rightarrow H(X)$. 
Its distributional kernel, the Szeg\"{o} kernel $\Pi\in \mathcal{D}'(X\times X)$, 
has singular support along the diagonal \cite{f}. The following analysis
is based on the description of $\Pi$ 
as an FIO with a complex
phase of positive type in \cite{bdm_sj}: up to smoothing terms,
\begin{equation}
 \label{eqn:microlocal_Pi}
 \Pi(x,y)=\int_0^{+\infty}e^{it\psi(x,y)}\,s(x,y,t)\,\mathrm{d}t.
\end{equation}
Here $\psi$ is essentially determined along the diagonal by the metric, and the amplitude is a classical symbol
admitting an asymptotic expansion of the form
$$
s(x,y,t)\sim \sum_{j\ge 0}t^{d-j}\,s_{j}(x,y)
$$
(see also the discussion in \S 2 of \cite{sz}). 
It follows from (\ref{eqn:microlocal_Pi}) that the wave front of $\Pi$ 
is the anti-diagonal in $\Sigma\times \Sigma$:
\begin{equation}
 \label{eqn:wave_front_Pi}
\mathrm{WF}(\Pi)=\Big\{\big((x,r\,\alpha_x),\,(x,-r\,\alpha_x)\big):\,x\in X,\,r>0\Big\}
\end{equation}
(\cite{bdm_sj}, \cite{bdm-g}).
In a system of Heisenberg local coordinates centered at $x\in X$, by the discussion in \S 3 of
\cite{sz} we have
\begin{eqnarray}
\label{eqn:expansion_sz}
 \lefteqn{t\,\psi\big(x+(\theta,\mathbf{v}),x+(\theta',\mathbf{v}')\big)}\\
&=&it\,\left[1-e^{i(\theta-\theta')}\right]-it\,\psi_2\left(\mathbf{v},\mathbf{v}'\right)
\,e^{i(\theta-\theta')}+t\,R_3\left(\mathbf{v},\mathbf{v}'\right)\,e^{i(\theta-\theta')}.\nonumber
\end{eqnarray}
Furthermore, $s_0(x,x)=\pi^{-d}$.

\subsection{The underlying dynamics on $X$}

The arguments in this paper are based on the local analysis of the Fourier T\"{o}plitz
operator $U_T(\tau)$. In the $S^1$-invariant case, $\varsigma_T$
is (the pull-back to $X$ of) a $\mathcal{C}^\infty$ function on $M$, and
$U_T(\tau)$ may be regarded as a quantization of the classical dynamics of $\varsigma_T$.
Before embarking on the actual proof, it is in order to put things in perspective by
showing that even in this more general case
there is a \lq conformally contact\rq\, dynamics in the picture, which is \lq quantized\rq\, by $U_T(\tau)$.
Before doing so, however, let us briefly clarify the relation between the flows of
$\sigma_Q$ on $T^*X$ and of $\sigma_T$ on $\Sigma$. 

\subsubsection{The flows of $\sigma_Q$ and $\sigma_T$}

Recall that among the principal symbols $\sigma_T:\Sigma\rightarrow \mathbb{R}$ of $T$ and
$\sigma_Q:(T^*X)_0\rightarrow \mathbb{R}$ of $Q$, where $(T^*X)_0\subseteq T^*X$ 
is the complement of the zero section, and the reduced symbol
$\varsigma_T:X\rightarrow \mathbb{R}$ there are the relations
\begin{equation}
 \label{eqn:symbol_T_Q}
 \sigma_Q\big(x,r\,\alpha_x\big)=\sigma_T\big(x,r\,\alpha_x\big)=r\,\varsigma_T(x).
\end{equation}
Let $\phi_\tau^{T^*X}:(T^*X)_0\rightarrow (T^*X)_0$ and 
$\phi_\tau^{\Sigma}:\Sigma\rightarrow \Sigma$
be the Hamiltonian flows generated by $\sigma_Q$ on $(T^*X)_0$ and 
by $\sigma_T$ on the symplectic submanifold $\Sigma$.

\begin{lem}
\label{lem:invariant_Sigma}
 Given that $[\Pi,Q]=0$, for every $\tau\in \mathbb{R}$ we have
 $
 \phi_\tau^{T^*X}(\Sigma)=\Sigma.
 $
\end{lem}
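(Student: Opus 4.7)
The plan is to exploit the commutation relation at the level of the one-parameter group $U(\tau)=e^{i\tau Q}$ and invoke a wavefront set/Egorov-type transformation rule. Since $[\Pi,Q]=0$, we have $[\Pi,U(\tau)]=0$ for every $\tau\in\mathbb{R}$, and using $U(\tau)^{-1}=U(-\tau)$ this is equivalent to the identity
\begin{equation*}
U(\tau)\circ \Pi\circ U(-\tau)=\Pi,\qquad \tau\in \mathbb{R}.
\end{equation*}

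Next I would track the wavefront set of both sides. By (\ref{eqn:wave_front_Pi}), after the standard sign-flip identification turning $\mathrm{WF}(\Pi)\subseteq T^*(X\times X)\setminus 0$ into a canonical relation $\mathrm{WF}'(\Pi)$, the latter is the diagonal of $\Sigma$. On the other hand, $U(\tau)$ is a Fourier integral operator whose canonical relation is the graph of the homogeneous symplectomorphism $\phi_\tau^{T^*X}:(T^*X)_0\to (T^*X)_0$, and likewise $U(-\tau)$ corresponds to $\phi_{-\tau}^{T^*X}=(\phi_\tau^{T^*X})^{-1}$. The standard composition rule for canonical relations then yields
\begin{equation*}
\mathrm{WF}'\big(U(\tau)\circ\Pi\circ U(-\tau)\big)=\big(\phi_\tau^{T^*X}\times \phi_\tau^{T^*X}\big)\big(\mathrm{diag}(\Sigma)\big)=\mathrm{diag}\big(\phi_\tau^{T^*X}(\Sigma)\big).
\end{equation*}
Equating this with $\mathrm{WF}'(\Pi)=\mathrm{diag}(\Sigma)$ forces $\phi_\tau^{T^*X}(\Sigma)=\Sigma$, which is the claim. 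Running the argument with $-\tau$ in place of $\tau$ (or simply noting that $\phi_\tau^{T^*X}$ is a diffeomorphism) takes care of the equality rather than mere inclusion.

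The one technical point worth flagging is that $\Pi$ is not a classical FIO but one with a complex phase of positive type in the sense of \cite{bdm_sj}; consequently the transformation rule for $\mathrm{WF}'(\Pi)$ under conjugation by a genuine (real-phase) FIO $U(\tau)$ has to be invoked in the Boutet de Monvel--Sj\"ostrand framework rather than via the classical Egorov theorem. This is the main obstacle, but it is by now a well-documented composition rule in the literature on Szeg\"{o} kernels (\cite{bdm_sj}, \cite{bdm-g}), so the argument ultimately reduces to citing the correct microlocal composition theorem and observing that $\mathrm{diag}(\Sigma)$ is determined intrinsically by $\Pi$.
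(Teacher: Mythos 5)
Your proof is correct and follows essentially the same route as the paper: both arguments rest on translating the commutation $[\Pi,U(\tau)]=0$ into a statement about wavefront sets via the explicit descriptions $\mathrm{WF}'(\Pi)=\mathrm{diag}(\Sigma)$ and $\mathrm{WF}'(U(\tau))=\mathrm{graph}(\phi_{-\tau}^{T^*X})$. The only cosmetic difference is that you work with the conjugated identity $U(\tau)\Pi U(-\tau)=\Pi$ and invoke the Egorov-type push-forward of $\mathrm{WF}'(\Pi)$, whereas the paper equates $\mathrm{WF}'(\Pi\circ U(\tau))$ with $\mathrm{WF}'(U(\tau)\circ\Pi)$ directly and spells out the resulting equality of graphs; your concluding caveat about $\Pi$ having a complex phase is apt, and the paper handles it implicitly by appealing to the clean composition calculus rather than to the classical pseudodifferential Egorov theorem.
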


\begin{proof}
 Since $[\Pi,U(\tau)]=0$, wave fronts satisfy 
 \begin{eqnarray}
\label{eqn:commuting_wave_fronts}
\lefteqn{\mathrm{WF}'(\Pi)\circ\mathrm{WF}'\big(U(\tau)\big)=\mathrm{WF}'\big(\Pi\circ U(\tau)\big)}\\
&=&\mathrm{WF}'\big(U(\tau)\circ \Pi\big)=\mathrm{WF}'\big(U(\tau)\big)\circ \mathrm{WF}'(\Pi).
\nonumber
 \end{eqnarray}
Now by (\ref{eqn:wave_front_Pi}) and \cite{dg}
\begin{eqnarray}
 \label{eqn:wave_front_Pi'}
 \mathrm{WF}'(\Pi)&=&\Big\{\big((x,r\,\alpha_x),(x,r\,\alpha_x)\big)\,:\,x\in X,\,r>0\Big\},\\
\label{eqn:wave_front_tau}
\mathrm{WF}'\big(U(\tau)\big)&=&\mathrm{graph}\left(\phi_{-\tau}^{T^*X}\right)\\
&=&\left\{\left(\phi_{\tau}^{T^*X}(x,\eta),(x,\eta)\right):\,(x,\eta)\in (T^*X)_0\right\}.\nonumber
\end{eqnarray}

By (\ref{eqn:commuting_wave_fronts}) and (\ref{eqn:wave_front_Pi'}) - (\ref{eqn:wave_front_tau})
we obtain 
\begin{eqnarray}
 \label{eqn:equality_wave_fronts}
\lefteqn{\left\{\left(\phi_{\tau}^{T^*X}(x,r\,\alpha_x),\,(x,r\,\alpha_x)\right):\,x\in X,\,r>0\right\}}\\
&=&\left\{\left((x,r\,\alpha_x),\,\phi_{-\tau}^{T^*X}(x,r\,\alpha_x)\right):\,x\in X,\,r>0\right\}.\nonumber
\end{eqnarray}

Clearly (\ref{eqn:equality_wave_fronts}) implies the statement. 
\end{proof}

\begin{cor}
\label{cor:restriction_of_flows}
Given that $[\Pi,Q]=0$, $\phi_\tau^{\Sigma}$ is the restriction 
of $\phi_\tau^{T^*X}$ to $\Sigma$.
\end{cor}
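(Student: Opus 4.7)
The plan is to deduce the corollary from Lemma \ref{lem:invariant_Sigma} together with a standard fact of symplectic geometry: whenever $\Sigma$ is a symplectic submanifold of a symplectic manifold $(Y,\Omega)$ and $H:Y\to\mathbb{R}$ is a smooth function whose Hamiltonian vector field $X_H^Y$ is everywhere tangent to $\Sigma$, then $X_H^Y|_\Sigma$ coincides with the Hamiltonian vector field $X_{H|_\Sigma}^\Sigma$ of the restricted function with respect to $\Omega|_\Sigma$. In our setting $Y=(T^*X)_0$ with its canonical symplectic form, $\Sigma$ is the symplectic cone sprayed by $\alpha$ with the restricted symplectic form, and $H=\sigma_Q$; then $H|_\Sigma=\sigma_T$ by (\ref{eqn:symbol_T_Q}).

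First I would observe that Lemma \ref{lem:invariant_Sigma} gives $\phi_\tau^{T^*X}(\Sigma)=\Sigma$ for all $\tau$, so differentiating at $\tau=0$ the Hamiltonian vector field $X_{\sigma_Q}$ is tangent to $\Sigma$ at every point of $\Sigma$. Next I would verify the general principle by a one-line computation: for $p\in\Sigma$ and $v\in T_p\Sigma$,
\[
(\Omega|_\Sigma)_p\!\left(X_{\sigma_Q}|_p,\,v\right)
=\Omega_p\!\left(X_{\sigma_Q}|_p,\,v\right)
=d(\sigma_Q)_p(v)
=d(\sigma_Q|_\Sigma)_p(v)
=d(\sigma_T)_p(v),
\]
where in the first equality we use that $v$ is tangent to $\Sigma$, and non-degeneracy of $\Omega|_\Sigma$ (i.e.\ that $\Sigma$ is symplectic) then identifies $X_{\sigma_Q}|_\Sigma$ with $X_{\sigma_T}^\Sigma$.

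By uniqueness of integral curves, the restriction of the flow $\phi_\tau^{T^*X}$ to $\Sigma$ therefore agrees with the Hamiltonian flow $\phi_\tau^\Sigma$ of $\sigma_T$ on $\Sigma$, proving the claim. There is no genuine obstacle here; the only point that requires a remark is that the symplectic form on $\Sigma$ invoked implicitly in defining $\phi_\tau^\Sigma$ is the restriction of the canonical symplectic form of $T^*X$, which is the content of $\Sigma$ being a symplectic submanifold (a fact already standing behind the statement of Lemma \ref{lem:invariant_Sigma} and recorded in \cite{bdm-g}).
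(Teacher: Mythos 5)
Your proof is correct, and since the paper states this corollary without proof as an immediate consequence of Lemma \ref{lem:invariant_Sigma}, your argument is exactly the standard justification the author leaves implicit: flow invariance from the lemma gives tangency of $X_{\sigma_Q}$ to $\Sigma$, restriction of a Hamiltonian vector field tangent to a symplectic submanifold is the Hamiltonian vector field of the restricted function (using that $\omega_\Sigma=\mathrm{d}(r\alpha)$ is indeed the restriction of $\mathrm{d}\lambda_{\mathrm{can}}$ and that $\sigma_Q|_\Sigma=\sigma_T$ by (\ref{eqn:symbol_T_Q})), and uniqueness of integral curves closes the argument.
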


\subsubsection{The contact flow on $X$}

In the $S^1$-invariant case, $U_T(\tau)$ is a quantization 
of a Hamiltonian flow on $M$. Namely, let $f=\varsigma_T$, naturally
interpreted as a $\mathcal{C}^\infty$ real function on $M$, and let 
$\phi_\tau^M:M\rightarrow M$ be the flow of $f$ generated by the
Hamiltonian vector field $\upsilon_f\in\mathfrak{X}(M)$ of $f$ with respect
to $(M,2\,\omega)$. Then
$$
\widetilde{\upsilon}_f=:\upsilon_f^\sharp -f\,\frac{\partial}{\partial\theta}
$$
is a contact vector field on $X$ lifting $\upsilon_f$, and $U_T(\tau)$ is 
a T\"{o}plitz Fourier operator associated to the corresponding contact flow $\phi^X_\tau$.

By way of motivation, we shall show here that even in the general case
$U_T(\tau)$ is associated to a suitable
underlying conformally contact dynamics on $X$.

Given the direct sum decomposition $TX\cong \mathcal{V}\oplus \mathcal{H}$, we also have
$T^*X\cong \mathcal{V}^*\oplus \mathcal{H}^*$, where $\mathcal{V}^*=\mathcal{H}^0=\mathrm{span}(\alpha)$,
$\mathcal{H}^*=\mathcal{V}^0$ (annihilators). 
We shall accordingly write
\begin{eqnarray}
 \mathrm{d}\varsigma_T=\mathrm{d}^\mathrm{v}\varsigma_T+\mathrm{d}^\mathrm{h}\varsigma_T&\mathrm{with}&
 \mathrm{d}^\mathrm{v}\varsigma_T\in \mathcal{C}^\infty(X,\mathcal{V}),\,
 \mathrm{d}^\mathrm{h}\varsigma_T\in \mathcal{C}^\infty(X,\mathcal{H}).
\end{eqnarray}

Since $\partial/\partial\theta$ and $\alpha$ are in duality, we have 
$\mathrm{d}^\mathrm{v}\varsigma_T=(\partial_\theta\varsigma_T)\cdot \alpha$. On the other hand, the pull back
$\omega=\pi^*(\omega)$ is a symplectic structure on the vector sub-bundle
$\mathcal{H}\subseteq T^*X$ (here and in the following we shall more or less systematically omit symbols of 
pull-back in order to simplify notation). Let $\mathcal{X}^\mathrm{h} (X)$ be the space of horizontal vector fields on $X$,
that is, smooth sections of $\mathcal{H}$.
Then $\mathrm{d}^\mathrm{h}\varsigma_T$ corresponds to a unique 
$\upsilon_T^\mathrm{h}\in \mathfrak{X}^\mathrm{h}(X)$ under $2\,\omega$

Clearly, $\Sigma\stackrel{q}{\rightarrow}X$ is a trivial $\mathbb{R}^+$-bundle.
In terms of the diffeomorphism $(x,r\,\alpha_x)\in\Sigma\mapsto (x,r)\in X\times \mathbb{R}^+$ and the decomposition
$TX\cong \mathcal{V}\oplus \mathcal{H}$, omitting the pull-back symbol $q^*$ we have
\begin{eqnarray}
 T\Sigma&\cong &TX\oplus \mathrm{span}\,\left\{\frac{\partial}{\partial r}\right\}\\
& \cong& \mathcal{V}\oplus \mathcal{H}\oplus \mathrm{span}\,\left\{\frac{\partial}{\partial r}\right\}
\cong \mathcal{H}\oplus \mathrm{span}\,\left\{\frac{\partial}{\partial \theta},\,\frac{\partial}{\partial r}\right\}.
\nonumber
\end{eqnarray}
Let $\mathfrak{X}^\mathrm{h}(\Sigma)\subseteq\mathfrak{X}(\Sigma)$ 
be the space of vector fields on $\Sigma$ tangent to the distribution
$\mathcal{H}=q^*(\mathcal{H})$.
Given $\upsilon\in \mathfrak{X}(\Sigma)$, one can accordingly write
$$
\upsilon=\upsilon^h+a\,\frac{\partial}{\partial \theta}+b\,\frac{\partial}{\partial r}
$$
for unique $\upsilon^\mathrm{h}\in \mathfrak{X}^\mathrm{h}(\Sigma)$ and $a,b\in \mathcal{C}^\infty(\Sigma)$.

Let $\omega_\Sigma$ the symplectic structure on $\Sigma$, given by
\begin{equation}
 \label{eqn:symplectic_Sigma}
 \omega_\Sigma=\mathrm{d}(r\,\alpha)=\mathrm{d}r\wedge \alpha+r\,\mathrm{d}\alpha=
\mathrm{d}r\wedge \alpha+2r\,\omega,
\end{equation}
where $\omega=q^*(\omega)$. Let $\upsilon_T\in \mathfrak{X}(\Sigma)$ be the Hamiltonian vector field of $\sigma_T$
with respect to $\omega_\Sigma$.

We have
\begin{eqnarray}
 \label{eqn:differential_sigmaT}
 \mathrm{d}\sigma_T&=&\mathrm{d}(r\,\varsigma_T)=\varsigma_T\,\mathrm{d}r+r\,\mathrm{d}\varsigma_T\\
 &=&\varsigma_T\,\mathrm{d}r+r\,\mathrm{d}^{\mathrm{v}}\varsigma_T+r\,\mathrm{d}^{\mathrm{h}}\varsigma_T=
 \varsigma_T\,\mathrm{d}r+r\,(\partial_\theta\varsigma_T)\cdot \alpha+r\,\mathrm{d}^{\mathrm{h}}\varsigma_T.
 \nonumber
\end{eqnarray}

Let $\upsilon_T^\mathrm{h}\in \mathfrak{X}^{\mathrm{h}}(X)$ be the horizontal vector field on $X$ that
 corresponds to $\mathrm{d}^\mathrm{h}\varsigma_T$ under $2\,\omega$ (pulled back to $\mathcal{H}$):
 $$
 \mathrm{d}^\mathrm{h}\varsigma_T=2\,\iota\left(\upsilon_T^\mathrm{h}\right)\,\omega=
 2\,\omega\left(\upsilon_T^\mathrm{h},\cdot\right)
 $$
and let us define
$$
\upsilon_T^X=:\upsilon_T^\mathrm{h}-\varsigma_T\,\frac{\partial}{\partial\theta}\in \mathfrak{X}(X),
$$
and view $\upsilon_T^X$ as a vector field on $\Sigma$ in a natural manner.
Inspection of (\ref{eqn:differential_sigmaT}) then shows the following:

\begin{lem}
The Hamiltonian vector field of $\sigma_T$ with respect to $\omega_\Sigma$ is given by
 $$
 \upsilon_T=\upsilon_T^\mathrm{h}-\varsigma_T\,\frac{\partial}{\partial\theta}
 +r\,(\partial_\theta\varsigma_T)\,\frac{\partial}{\partial r}=\upsilon_T^X+
 r\,(\partial_\theta\varsigma_T)\,\frac{\partial}{\partial r}.
 $$
\end{lem}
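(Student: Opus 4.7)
The plan is to verify the identity directly from the defining equation of the Hamiltonian vector field, namely
$$
\iota(\upsilon_T)\,\omega_\Sigma=\mathrm{d}\sigma_T.
$$
The right-hand side has already been unpacked in (\ref{eqn:differential_sigmaT}) as $\varsigma_T\,\mathrm{d}r+r\,(\partial_\theta\varsigma_T)\,\alpha+r\,\mathrm{d}^{\mathrm{h}}\varsigma_T$, and $\omega_\Sigma$ has been expressed in (\ref{eqn:symplectic_Sigma}) as $\mathrm{d}r\wedge\alpha+2r\,\omega$, so it suffices to contract the proposed vector field
$$
\upsilon_T\;=\;\upsilon_T^{\mathrm{h}}-\varsigma_T\,\frac{\partial}{\partial\theta}+r\,(\partial_\theta\varsigma_T)\,\frac{\partial}{\partial r}
$$
against each summand of $\omega_\Sigma$ and recognize the result.

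First I would treat the $2r\,\omega$ piece. Since $\omega=q^*\pi^*\omega_M$ is horizontal both with respect to the structure circle action and the $r$-direction on $\Sigma$, the kernel of $\omega$ contains $\partial/\partial\theta$ and $\partial/\partial r$; hence only the horizontal component $\upsilon_T^{\mathrm{h}}$ of $\upsilon_T$ contributes, and by the very definition $\mathrm{d}^{\mathrm{h}}\varsigma_T=2\,\omega(\upsilon_T^{\mathrm{h}},\cdot)$ one gets $\iota(\upsilon_T)(2r\,\omega)=r\,\mathrm{d}^{\mathrm{h}}\varsigma_T$, accounting for the third summand of $\mathrm{d}\sigma_T$.

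Next I would handle $\mathrm{d}r\wedge\alpha$, using $\iota(X)(\mathrm{d}r\wedge\alpha)=\mathrm{d}r(X)\,\alpha-\alpha(X)\,\mathrm{d}r$. The horizontal component $\upsilon_T^{\mathrm{h}}$ is annihilated by both $\mathrm{d}r$ and $\alpha$, so it drops out. The vertical piece $-\varsigma_T\,\partial/\partial\theta$ satisfies $\alpha(-\varsigma_T\,\partial/\partial\theta)=-\varsigma_T$, contributing $\varsigma_T\,\mathrm{d}r$; and the radial piece $r\,(\partial_\theta\varsigma_T)\,\partial/\partial r$ satisfies $\mathrm{d}r\big(r\,(\partial_\theta\varsigma_T)\,\partial/\partial r\big)=r\,(\partial_\theta\varsigma_T)$, contributing $r\,(\partial_\theta\varsigma_T)\,\alpha$. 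Assembling the three pieces recovers precisely $\mathrm{d}\sigma_T$, proving the lemma.

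No step here is really a \emph{hard} obstacle; the whole proof is a bookkeeping exercise once the dual decomposition $T^*X\cong\mathcal{V}^*\oplus\mathcal{H}^*$ and the compatibility of $\omega$ with the $\partial/\partial r$-direction on $\Sigma$ are in place. The only issue requiring some care is making sure the pullback symbols (suppressed in the text) and the sign convention for the Hamiltonian vector field are consistent throughout, and that the identification of $\mathrm{d}^{\mathrm{v}}\varsigma_T$ with $(\partial_\theta\varsigma_T)\,\alpha$, used implicitly in (\ref{eqn:differential_sigmaT}), is invoked correctly when matching the $\alpha$-coefficient.
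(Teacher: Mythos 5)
Your computation is correct and is exactly the "inspection of (\ref{eqn:differential_sigmaT})" that the paper invokes: the paper leaves the contraction $\iota(\upsilon_T)\,\omega_\Sigma=\mathrm{d}\sigma_T$ as an unstated bookkeeping check, and you carry it out. The decomposition of $\omega_\Sigma$ into $\mathrm{d}r\wedge\alpha$ and $2r\,\omega$, and the term-by-term matching against (\ref{eqn:differential_sigmaT}), is precisely the intended argument.
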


\begin{cor}
 \label{cor:hamiltonian_lift}
The Hamiltonian flow $\phi_\tau^\Sigma:\Sigma\rightarrow \Sigma$
of $\sigma_T$ is a lifting the flow $\phi^X_\tau:X\rightarrow X$ of $\upsilon_T^X$.
\end{cor}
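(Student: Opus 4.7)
The plan is to deduce Corollary \ref{cor:hamiltonian_lift} as an essentially immediate consequence of the preceding lemma, by unwinding the identification $\Sigma\cong X\times\mathbb{R}^+$ and checking that the Hamiltonian vector field $\upsilon_T$ on $\Sigma$ projects under $dq$ onto $\upsilon_T^X$.

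First I would recall that saying $\phi_\tau^\Sigma$ is a lifting of $\phi_\tau^X$ means precisely $q\circ\phi_\tau^\Sigma=\phi_\tau^X\circ q$, where $q:\Sigma\to X$ is the trivial $\mathbb{R}^+$-bundle projection. By the uniqueness of solutions of ODEs, this is equivalent to the infinitesimal statement $dq\circ\upsilon_T=\upsilon_T^X\circ q$, that is, that the $X$-component of $\upsilon_T$ under the splitting $T\Sigma\cong q^*(TX)\oplus\mathrm{span}\{\partial/\partial r\}$ coincides with the pull-back of $\upsilon_T^X$.

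Next I would apply the previous lemma, which gives the explicit decomposition
\[
\upsilon_T=\upsilon_T^X+r\,(\partial_\theta\varsigma_T)\,\frac{\partial}{\partial r},
\]
where the first summand is the natural lift of $\upsilon_T^X\in\mathfrak{X}(X)$ to $\Sigma$ (i.e.\ its horizontal lift under $q$, which has no $\partial/\partial r$ component by construction), while the second summand lies in $\ker(dq)=\mathrm{span}\{\partial/\partial r\}$. Applying $dq$ annihilates the fiber-tangent term and leaves $dq(\upsilon_T)=\upsilon_T^X$, as required.

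There is really no main obstacle here; the only subtlety is book-keeping, namely checking that the vector field denoted $\upsilon_T^X$ on $\Sigma$ in the lemma is indeed the $q$-horizontal lift of $\upsilon_T^X\in\mathfrak{X}(X)$. This follows from the way the decomposition $T\Sigma\cong\mathcal{H}\oplus\mathrm{span}\{\partial/\partial\theta,\partial/\partial r\}$ was set up after (\ref{eqn:symplectic_Sigma}), where pull-back symbols were omitted. Once this identification is made explicit, the corollary follows at once and integrating the infinitesimal equality over $\tau$ yields $q\circ\phi_\tau^\Sigma=\phi_\tau^X\circ q$.
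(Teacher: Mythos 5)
Your proof is correct and follows the same route the paper implicitly takes: the corollary is an immediate consequence of the preceding lemma's decomposition $\upsilon_T=\upsilon_T^X+r\,(\partial_\theta\varsigma_T)\,\partial/\partial r$, since applying $dq$ kills the $\partial/\partial r$ term and yields $dq\circ\upsilon_T=\upsilon_T^X\circ q$, which integrates to the flow relation. The paper leaves this deduction unwritten; you have spelled it out correctly, including the relevant uniqueness-of-ODE-solutions step.
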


As we shall see presently, in general the flow $\phi^X_\tau$ generated by $\upsilon_T^X$ does not preserve
$\alpha$, and $\varsigma_T$ is not a constant of motion; nonetheless, $\phi^X_\tau$ always 
leaves $\Sigma$ invariant.

\begin{lem}
 \begin{enumerate}
  \item The Lie derivative of $\alpha$ with respect to $\upsilon_T^X$ is given by
  $$
  L_{\upsilon_T^X}(\alpha)=-(\partial_\theta\varsigma_T)\cdot\alpha.
  $$
  \item The derivative of $\varsigma_T$ along $\phi^X_\tau$ is 
  $
  \upsilon_T^X(\varsigma_T)=-\varsigma_T\cdot (\partial_\theta\varsigma_T).
  $
\item The 1-form 
$(1/\varsigma_T)\cdot \alpha$
is a contact form on $X$, and is $\phi^X_\tau$-invariant.
 \end{enumerate}

\end{lem}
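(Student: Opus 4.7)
The plan is to prove the three items in order, the first two by essentially direct computation from the definitions, and then to combine them for (3).

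For (1), I would apply Cartan's magic formula $L_{\upsilon_T^X}(\alpha)=\iota_{\upsilon_T^X}(\mathrm{d}\alpha)+\mathrm{d}\big(\iota_{\upsilon_T^X}\alpha\big)$. Since $\mathrm{d}\alpha=2\,\pi^*(\omega)$ is horizontal (i.e.\ $\iota_{\partial/\partial\theta}\,\mathrm{d}\alpha=0$), only the horizontal component of $\upsilon_T^X$ contributes to the first term, so
\[
\iota_{\upsilon_T^X}(\mathrm{d}\alpha)=2\,\iota_{\upsilon_T^{\mathrm{h}}}\omega=\mathrm{d}^{\mathrm{h}}\varsigma_T,
\]
by the defining property of $\upsilon_T^{\mathrm{h}}$. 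On the other hand, $\iota_{\upsilon_T^X}\alpha=\alpha\big(\upsilon_T^{\mathrm{h}}\big)-\varsigma_T\,\alpha\big(\partial/\partial\theta\big)=-\varsigma_T$, since $\upsilon_T^{\mathrm{h}}\in\ker(\alpha)$. Hence
\[
L_{\upsilon_T^X}(\alpha)=\mathrm{d}^{\mathrm{h}}\varsigma_T-\mathrm{d}\varsigma_T=-\mathrm{d}^{\mathrm{v}}\varsigma_T=-(\partial_\theta\varsigma_T)\cdot\alpha,
\]
using the decomposition $\mathrm{d}\varsigma_T=\mathrm{d}^{\mathrm{v}}\varsigma_T+\mathrm{d}^{\mathrm{h}}\varsigma_T$ and the identity $\mathrm{d}^{\mathrm{v}}\varsigma_T=(\partial_\theta\varsigma_T)\cdot\alpha$.

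For (2), I would just evaluate $\mathrm{d}\varsigma_T$ on $\upsilon_T^X$ and exploit the skew-symmetry of $\omega$. Splitting again,
\[
\upsilon_T^X(\varsigma_T)=\mathrm{d}^{\mathrm{h}}\varsigma_T\big(\upsilon_T^{\mathrm{h}}\big)-\varsigma_T\,(\partial_\theta\varsigma_T).
\]
But $\mathrm{d}^{\mathrm{h}}\varsigma_T\big(\upsilon_T^{\mathrm{h}}\big)=2\,\omega\big(\upsilon_T^{\mathrm{h}},\upsilon_T^{\mathrm{h}}\big)=0$, yielding the claimed identity.

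For (3), the contact property is automatic because $\varsigma_T>0$: one checks that
\[
\tfrac{1}{\varsigma_T}\,\alpha\wedge\Big[\mathrm{d}\big(\tfrac{1}{\varsigma_T}\,\alpha\big)\Big]^{\wedge d}=\tfrac{1}{\varsigma_T^{\,d+1}}\,\alpha\wedge(\mathrm{d}\alpha)^{\wedge d},
\]
since the cross term $-\varsigma_T^{-2}\,\mathrm{d}\varsigma_T\wedge\alpha$ in $\mathrm{d}\big(\varsigma_T^{-1}\alpha\big)$ is annihilated upon wedging with $\alpha$ on the left. For invariance, apply the Leibniz rule and combine (1) and (2):
\[
L_{\upsilon_T^X}\!\left(\tfrac{1}{\varsigma_T}\,\alpha\right)=-\tfrac{1}{\varsigma_T^{\,2}}\,\upsilon_T^X(\varsigma_T)\,\alpha+\tfrac{1}{\varsigma_T}\,L_{\upsilon_T^X}(\alpha)=\tfrac{\partial_\theta\varsigma_T}{\varsigma_T}\,\alpha-\tfrac{\partial_\theta\varsigma_T}{\varsigma_T}\,\alpha=0,
\]
so $\varsigma_T^{-1}\alpha$ is preserved by $\phi^X_\tau$. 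No step here looks genuinely difficult; the only subtlety is to keep straight the horizontal/vertical bookkeeping and to remember that $\omega$, regarded as a 2-form on $X$, kills $\partial/\partial\theta$.
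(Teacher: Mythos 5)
Your argument is correct and follows the same route as the paper: Cartan's formula for item (1), expansion via the horizontal/vertical splitting for item (2), and the Leibniz rule combining the two for item (3). The one thing you add is the explicit verification that $\varsigma_T^{-1}\alpha$ is a contact form — the paper simply asserts this — and your calculation (the cross term in $\mathrm{d}(\varsigma_T^{-1}\alpha)$ dies against $\alpha$ under wedging) is the right way to see it.
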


\begin{proof}
 Regarding the first point, 
 \begin{eqnarray*}
   L_{\upsilon_T^X}(\alpha)&=&\mathrm{d}\left(\iota \big(\upsilon_f^X\big)\,\alpha\right)+
   \iota \big(\upsilon_T^X\big)\,\mathrm{d}\alpha\\
   &=&-\mathrm{d}\varsigma_T+\iota\left(\upsilon_T^\mathrm{h}\right)\,2\omega=
   -\mathrm{d}^\mathrm{v}\varsigma_T=-(\partial_\theta\varsigma_T)\cdot\alpha.
  \end{eqnarray*}
  Next, 
  \begin{eqnarray*}
   \upsilon_T^X(\varsigma_T)&=&\upsilon_T^\mathrm{h}(\varsigma_T)-\varsigma_T\,\frac{\partial}{\partial\theta}\varsigma_T
   %\\
   %&=&
   =\mathrm{d}\varsigma_T\left(\upsilon_T^\mathrm{h}\right)-\varsigma_T\,\frac{\partial}{\partial\theta}\varsigma_T=
   \mathrm{d}^\mathrm{h}\varsigma_T\left(\upsilon_T^\mathrm{h}\right)-\varsigma_T\,\frac{\partial}{\partial\theta}\varsigma_T\\
   &=&2\,\omega\left(\upsilon_T^\mathrm{h},\upsilon_T^\mathrm{h}\right)-
   \varsigma_T\,\frac{\partial}{\partial\theta}\varsigma_T=-\varsigma_T\cdot (\partial_\theta\varsigma_T).
  \end{eqnarray*}
Finally,
\begin{eqnarray*}
 L_{\upsilon_T^X}\left(\frac{1}{\varsigma_T}\cdot \alpha\right)&=&-\frac{1}{\varsigma_T^2}\,\upsilon_T^X(\varsigma_T)\,\alpha
 +\frac{1}{\varsigma_T}\cdot L_{\upsilon_T^X}\left( \alpha\right)\\
 &=&-\frac{1}{\varsigma_T^2}\,\left(-\varsigma_T\cdot (\partial_\theta\varsigma_T)\right)\cdot \alpha
 +\frac{1}{\varsigma_T}\cdot\big( -(\partial_\theta\varsigma_T)\big)\cdot\alpha=0
\end{eqnarray*}

\end{proof}

\begin{cor}
 For any $t\in \mathbb{R}$, we have
 $$
 \left(\phi^X_\tau\right)^*(\alpha)=\frac{\varsigma_T\circ \phi_\tau^X}{\varsigma_T}\cdot \alpha.
 $$
 In particular,
 the cotangent lift of $\phi_\tau^X$ leaves $\Sigma$ invariant.
\end{cor}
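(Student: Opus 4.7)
The plan is to derive the formula as the solution of a pointwise linear ODE for the pullback of $\alpha$, using the two differentiation formulas established in the preceding lemma as an integrating-factor pair; the cotangent assertion will then follow at once from positivity of $\varsigma_T$.

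First, set $\alpha_\tau := (\phi^X_\tau)^*\alpha$ and differentiate in $\tau$. By the naturality of the Lie derivative and part (1) of the previous lemma,
$$
\frac{d}{d\tau}\alpha_\tau = (\phi^X_\tau)^*\bigl(L_{\upsilon_T^X}\alpha\bigr) = -\bigl((\partial_\theta\varsigma_T)\circ \phi^X_\tau\bigr)\cdot \alpha_\tau,
$$
a pointwise linear first-order ODE with initial condition $\alpha_0=\alpha$.

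Next I would use part (2) of the same lemma, namely $\upsilon_T^X(\varsigma_T) = -\varsigma_T\cdot (\partial_\theta\varsigma_T)$. Dividing by $\varsigma_T\circ \phi^X_\tau>0$ yields
$$
\frac{d}{d\tau}\log\bigl(\varsigma_T\circ \phi^X_\tau\bigr)
= \frac{\upsilon_T^X(\varsigma_T)\circ \phi^X_\tau}{\varsigma_T\circ \phi^X_\tau}
= -(\partial_\theta\varsigma_T)\circ \phi^X_\tau,
$$
which is precisely the coefficient appearing in the ODE above. Integrating in $\tau$ from $0$ and using $\varsigma_T\circ \phi^X_0 = \varsigma_T$ gives
$$
\alpha_\tau = \exp\!\left(\int_0^\tau \frac{d}{ds}\log\bigl(\varsigma_T\circ \phi^X_s\bigr)\,ds\right)\cdot \alpha
= \frac{\varsigma_T\circ \phi^X_\tau}{\varsigma_T}\cdot \alpha,
$$
which is the stated identity.

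For the second assertion, recall that the cotangent lift of $\phi^X_\tau$ sends $(x,\eta)\in T^*_xX$ to $\bigl(\phi^X_\tau(x),\,(\phi^X_{-\tau})^*\eta\bigr)$. Applying the identity just proved with $-\tau$ in place of $\tau$ and evaluating at $\phi^X_\tau(x)$, for $\eta=r\alpha_x$ with $r>0$ we obtain
$$
\bigl((\phi^X_{-\tau})^*(r\alpha)\bigr)_{\phi^X_\tau(x)}
= r\cdot \frac{\varsigma_T(x)}{\varsigma_T(\phi^X_\tau(x))}\cdot \alpha_{\phi^X_\tau(x)},
$$
whose $\alpha$-coefficient is strictly positive since $\varsigma_T>0$; hence the image lies in $\Sigma$. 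There is no genuine obstacle here: the statement is essentially a packaging of the preceding lemma, and the only point requiring minor care is bookkeeping the cotangent-lift sign convention in the last step.
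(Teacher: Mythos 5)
Your proof is correct, and the cotangent-lift bookkeeping at the end is handled properly. However, you take a slightly longer route than the paper intends: the statement is a Corollary of the preceding Lemma precisely because the Lemma's part (3) already records that $(1/\varsigma_T)\,\alpha$ is $\phi^X_\tau$-invariant. The one-line argument is therefore to write
\[
(\phi^X_\tau)^*\!\left(\tfrac{1}{\varsigma_T}\,\alpha\right)=\tfrac{1}{\varsigma_T}\,\alpha
\quad\Longleftrightarrow\quad
\tfrac{1}{\varsigma_T\circ\phi^X_\tau}\,(\phi^X_\tau)^*\alpha=\tfrac{1}{\varsigma_T}\,\alpha,
\]
and clear denominators. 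What you do instead is re-derive part (3) from parts (1) and (2): your pointwise linear ODE for $\alpha_\tau$, with the integrating factor coming from $\upsilon_T^X(\varsigma_T)=-\varsigma_T\,\partial_\theta\varsigma_T$, is exactly the standard argument that a tensor with vanishing Lie derivative along the flow (here, $(1/\varsigma_T)\,\alpha$) is flow-invariant, unpacked explicitly. Your version has the modest virtue of being self-contained in parts (1) and (2) alone, at the cost of redoing work the Lemma has already packaged.
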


On the other hand, the cotangent lift of $\phi^X_\tau$ is Hamiltonian, generated by the Hamiltonian
function 
$H=:-\lambda_{\mathrm{can}}\big(\upsilon_T^X\big)$, where $\lambda_{\mathrm{can}}$ is the tautological
1-form on $T^*X$; on $\Sigma$, 
$$
H\big((x,\,r\,\alpha_x)\big)=-r\,\alpha_x\left(\upsilon_T^X(x)\right)
=r\,\varsigma_T(x)=\sigma_T\big((x,\,r\,\alpha_x)\big).
$$

\begin{cor}
 The flow $\phi^\Sigma_\tau$ coincides with the restriction to $\Sigma\subseteq T^*X$
of the cotangent lift of $\phi^X_\tau$.
\end{cor}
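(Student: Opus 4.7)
The plan is to exploit the general fact that if a Hamiltonian flow on a symplectic manifold preserves a symplectic submanifold, then its restriction to that submanifold is itself the Hamiltonian flow of the restricted Hamiltonian with respect to the induced symplectic form. Concretely, let $\widetilde{\phi}_\tau$ denote the cotangent lift of $\phi^X_\tau$ to $T^*X$. By the preceding corollary, $\widetilde{\phi}_\tau$ leaves $\Sigma$ invariant, and the computation just before the statement shows that its generating Hamiltonian $H=-\lambda_{\mathrm{can}}(\upsilon_T^X)$ satisfies $H|_\Sigma=\sigma_T$. So the task reduces to identifying the restriction $\widetilde{\phi}_\tau|_\Sigma$ with the Hamiltonian flow of $\sigma_T$ with respect to $\omega_\Sigma$.

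First I would observe that $\Sigma$ is a symplectic submanifold of $T^*X$, and that the restriction of the canonical symplectic form $\mathrm{d}\lambda_{\mathrm{can}}$ on $T^*X$ to $\Sigma$ agrees with $\omega_\Sigma=\mathrm{d}(r\alpha)$. This is immediate because under the trivialization $(x,r\,\alpha_x)\in \Sigma\mapsto (x,r)\in X\times\mathbb{R}^+$ the pullback of $\lambda_{\mathrm{can}}$ to $\Sigma$ is precisely $r\,\alpha$, so its exterior differential coincides with the symplectic structure on $\Sigma$ fixed in \eqref{eqn:symplectic_Sigma}.

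Next, let $\upsilon_H$ be the Hamiltonian vector field of $H$ on $T^*X$, so that $\widetilde{\phi}_\tau$ is its flow. Since $\widetilde{\phi}_\tau(\Sigma)=\Sigma$ for all $\tau$, the vector field $\upsilon_H$ is tangent to $\Sigma$ along $\Sigma$, and its restriction $\upsilon_H|_\Sigma$ generates the flow $\widetilde{\phi}_\tau|_\Sigma$. For any $v\in T\Sigma$, using that the symplectic form on $\Sigma$ is inherited from $T^*X$ and that $H|_\Sigma=\sigma_T$,
$$
\omega_\Sigma\bigl(\upsilon_H|_\Sigma,\,v\bigr)
= (\mathrm{d}\lambda_{\mathrm{can}})\bigl(\upsilon_H,\,v\bigr)
= \mathrm{d}H(v) = \mathrm{d}\sigma_T(v).
$$
Thus $\upsilon_H|_\Sigma$ satisfies the defining relation of the Hamiltonian vector field of $\sigma_T$ on $(\Sigma,\omega_\Sigma)$, hence coincides with $\upsilon_T$; integrating yields $\widetilde{\phi}_\tau|_\Sigma=\phi^\Sigma_\tau$.

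There is no genuine obstacle here — the only things one needs to be careful about are the sign conventions for the cotangent lift and tautological form (making sure the generator is really $H=-\lambda_{\mathrm{can}}(\upsilon_T^X)$, as already verified in the preceding paragraph of the paper) and the identification of $\omega_\Sigma$ with the pullback of the canonical form. Once these bookkeeping points are in place the argument is a direct application of the general principle above.
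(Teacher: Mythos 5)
Your proposal is correct and follows essentially the same route as the paper, which establishes that the cotangent lift is Hamiltonian with generator $H=-\lambda_{\mathrm{can}}(\upsilon_T^X)$, that it preserves $\Sigma$, and that $H|_\Sigma=\sigma_T$, then states the corollary without further comment. You have simply made explicit the standard fact the paper leaves implicit — that $\omega_\Sigma$ is the pullback of $\mathrm{d}\lambda_{\mathrm{can}}$ and that restricting a Hamiltonian flow to an invariant symplectic submanifold yields the Hamiltonian flow of the restricted Hamiltonian — which is a sound and complete filling-in of the argument.
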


\subsection{Some linear algebra}

Let us collect here some facts from linear algebra that will be handy in the following.

\begin{lem}
 \label{lem:signature_1}
Let $R,\,S$ be $r\times r$ matrices, with $S$ symmetric, and consider the symmetric 
$2r\times 2r$ symmetric matrix
$$
C=C(R,S)=:\begin{pmatrix}
           0&R^T\\
R&S
          \end{pmatrix}.
$$
Then
$
\det(C)=(-1)^r\,\det(R)^2
$. If furthermore $\det(R)>0$ then the signature of $C$ is $\mathrm{sgn}(C)=0$.
\end{lem}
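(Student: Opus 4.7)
The plan is a direct block-matrix manipulation in two steps, one for the determinant and one for the signature.

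For the determinant formula, I would swap the two block rows of $C$ to obtain
$$
\begin{pmatrix} R & S \\ 0 & R^T \end{pmatrix},
$$
which is block upper triangular with diagonal blocks $R$ and $R^T$, hence has determinant $\det(R)\det(R^T) = \det(R)^2$. The row permutation that achieves this is the product of the $r$ disjoint transpositions $(i,\,i+r)$ for $i=1,\dots,r$, and thus has sign $(-1)^r$. Combining, $\det(C) = (-1)^r\det(R)^2$.

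For the signature, the hypothesis $\det(R)>0$ is used only to guarantee that $R$ is invertible, enabling a standard congruence reduction. Set $A = -\tfrac12 R^{-1} S$; since $S$ is symmetric, $RA + (RA)^T = -\tfrac12 S - \tfrac12 S = -S$, and a direct check gives the congruence
$$
\begin{pmatrix} I & 0 \\ A^T & I \end{pmatrix}\, C\, \begin{pmatrix} I & A \\ 0 & I \end{pmatrix}
= \begin{pmatrix} 0 & R^T \\ R & 0 \end{pmatrix}.
$$
A further congruence by $\mathrm{diag}(R^{-1},I)$ transforms the right-hand side into $\begin{pmatrix} 0 & I \\ I & 0 \end{pmatrix}$, whose eigenvalues are $\pm 1$ with equal multiplicity $r$; hence by Sylvester's law of inertia $\mathrm{sgn}(C) = 0$.

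There is no genuine obstacle here: the argument is purely formal linear algebra. The only points deserving care are the parity count in the block-row swap (which is precisely where the sign $(-1)^r$ comes from) and the fact that it is \emph{symmetry} of $S$ that makes the congruence kill the lower-right block cleanly in the second step.
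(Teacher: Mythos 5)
Your proof is correct. The determinant computation by a block-row swap is in the spirit of the paper's ``straightforward computation by row operations'' and needs no further comment. For the signature, however, you take a genuinely different route: you reduce $C$ to $\left(\begin{smallmatrix}0&I\\I&0\end{smallmatrix}\right)$ by an explicit two-step congruence and then invoke Sylvester's law of inertia, whereas the paper uses a homotopy argument: it observes that the signature is locally constant on nondegenerate symmetric matrices, joins $R$ to $I_r$ by a path $R_t$ in $GL^+(r,\mathbb{R})$, and simultaneously contracts $S$ to $0$, so that $C$ is deformed through nondegenerate symmetric matrices to $\left(\begin{smallmatrix}0&I\\I&0\end{smallmatrix}\right)$. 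Both are valid; your congruence approach has the advantage of being fully explicit and, as you note, only uses invertibility of $R$, so it actually proves $\mathrm{sgn}(C)=0$ under the weaker hypothesis $\det(R)\neq 0$ (the paper needs $\det(R)>0$ to guarantee connectedness of the path space $GL^+$). The paper's homotopy argument avoids any matrix manipulation but requires knowing that $GL^+(r,\mathbb{R})$ is path-connected. One small caution in your writeup: you used the convention $L C L^T$ in the first congruence step, so in the second step the consistent choice is to conjugate by $\mathrm{diag}(I,R^{-1})$ (or use the convention $P^T M P$ with $\mathrm{diag}(R^{-1},I)$); as written the two steps mix conventions, though this is a cosmetic point since the needed congruence plainly exists.
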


\begin{proof}
 The first statement is a straightforward computation by row operations. As to the second, let us remark that
the signature (actually, the number of positive and negative eigenvalues) 
is locally constant on the space of non-degenerate symmetric matrices. If $\det (R)>0$, we may find
a continuous path $R_t$, $0\le t\le 1$, with $R_0=R$ and $R_1=I_{r}$ (the identity matrix), and $\det(R_t)>0$
for every $t$. Thus
$$
C_t=:\begin{pmatrix}
      0&R_t^T\\
R_t&(1-t)\,S
     \end{pmatrix}
$$
is a family of non-degenerate symmetric matrices, with $C_0=C$, and
one easily checks that $\mathrm{sgn}(C_1)=0$.
\end{proof}

\subsection{Effective volumes and matrices}
\label{sctn:effective_volumes}

For $\xi \in \mathfrak{g}$, let $\xi_M\in \mathfrak{X}(M)$ be the vector field it induces on $M$ under $\mu$, and
for a given $m\in M$ 
let $\mathfrak{g}_M(m)\subseteq T_mM$ be the vector subspace of all the $\xi_M(m)$ ($\xi\in \mathfrak{g}$).
If $\mathbf{0}\in \mathfrak{g}^\vee$ is a regular value of $\Phi$, then $G$ acts on $M'=\Phi^{-1}(\mathbf{0})$ locally freely.
If $m\in M'$, therefore, the evaluation map $\mathrm{val}_m:\mathfrak{g}\rightarrow T_mM$ is injective, and a linear
isomorphism $\mathfrak{g}\rightarrow \mathfrak{g}_M(m)$. 
Let $\mathcal{B}=(\xi_j)$ be a fixed orthonormal basis of $\mathfrak{g}$, and $\mathcal{B}_m=(\mathbf{u}_j)$ 
be an orthonormal basis of $\mathfrak{g}_M(m)
\subseteq T_mM$ (with the restricted metric), and let $C_m=M^\mathcal{B}_{\mathcal{B}_m}(\mathrm{val}_m)$ 
be the $d\times d$ matrix representing $\mathrm{val}_m:\mathfrak{g}\rightarrow \mathfrak{g}_M(m)$
with respect to these two basis. Thus if $\xi\in \mathfrak{g}$ and $\nu=M_\mathcal{B}(\xi)\in \mathbb{R}^e$ 
is its coordinate vector with respect to
$\mathcal{B}$, then
\begin{equation}
 \label{eqn:pull_back_metric}
\|\xi_M(m)\|^2_m=\|C_m\,\nu\|^2=\nu^t\,C^t\,C\,\nu,
\end{equation}
where $\|\cdot\|_m$ is the norm on $T_mM$, and $\|\cdot\|$ is the standard Euclidean norm on $\mathbb{R}^e$.

Although $C_m$ depends on the choice of the orthonormal basis $\mathcal{B}$ and $\mathcal{B}_m$, $\det (C_m)$ is
invariantly defined (up to sign), and has the following geometric significance. 
Let $G\cdot m\subseteq M$ be the $G$-orbit of $m\in M'$, so
that the choice of an orientation on $\mathfrak{g}$ determines an orientation
on $G\cdot m$; then
$V_{\mathrm{eff}}(m)$ is, by definition, the volume of $G\cdot m$ with respect to the Riemannian volume form for the
restricted metric. We may assume without loss that $\mathcal{B}$ and $\mathcal{B}_m$ have been chosen oriented, so that
$\det (C_m)>0$. 
In view of (\ref{eqn:pull_back_metric}), the pull-back to $G$ of the Riemannian volume form on $G\cdot m$ under the 
$\left|G^M_m\right|:1$ covering map
$\mu^m:G\rightarrow G\cdot m$, $g\mapsto \mu_g(m)$, is $\det(C_m)\,\mathrm{vol}_G$, where $\mathrm{vol}_G$ is 
the Haar volume form; $\det(C_m)$ is clearly $G$-invariant, since $G$ acts on $M$ by Riemannian isometries. 
Therefore,
\begin{equation}
 \label{eqn:orbital_volume}
\left|G^M_m\right|\,V_{\mathrm{eff}}^M(m)=\int_G\det(C_m)\,\mathrm{vol}_G=\det(C_m).
\end{equation}

\section{Proof of Theorem \ref{thm:rapid_decay}.}

\begin{proof}
We shall first prove the Theorem under the assumption $x=y$. 

Let $\mathrm{d}V_G$ be the Haar measure on $G$.
Let $\rho:G\rightarrow U(H)$ be a unitary $G$-action on a Hilbert space $H$; then the orthogonal 
projection $P^{(\varpi)}:H\rightarrow H^{(\varpi)}$ onto the $\varpi$-th isotype, 
with irreducible representation $(\rho_{(\varpi)},V_\varpi)$, is given by
$$
P^{(\varpi)}=d_\varpi\cdot \int_G \chi_{\varpi}\left(g^{-1}\right)\,\rho(g)\,\mathrm{d}V_G(g),
$$
where $d_\varpi=:\dim(V_\varpi)$ \cite{dix}, $\chi_\varpi(g)=:\mathrm{trace}\big(\rho_\varpi(g)\big)$. In our case, 
$U_T^{(\varpi)}(\tau)=P^{(\varpi)}\circ U_T(\tau)=P^{(\varpi)}\circ U(\tau)\circ \Pi=U(\tau)^{(\varpi)}\circ \Pi$,
where $U^{(\varpi)}(\tau)=:P^{(\varpi)}\circ U(\tau)$. Thus, in terms of distributional kernels,
\begin{eqnarray}
 \label{eqn:kernel_composition}
U^{(\varpi)}(\tau)(x,y)&=&d_\varpi\cdot \int_G\overline{\chi_\varpi (g)}\,U(\tau)\left(\mu^X_{g^{-1}}(x),y\right)\,\mathrm{d}V_G(g)
\nonumber\\
&=&d_\varpi\cdot \int_G\overline{\chi_\varpi (g)}\,U(\tau)\left(x,\mu^X_{g}(y)\right)\,\mathrm{d}V_G(g),
\end{eqnarray}
and on the other hand
\begin{equation}
 \label{eqn:kernel_composition_Pi}
U_T^{(\varpi)}(\tau)\left(x',x''\right)=\int_X\,U^{(\varpi)}(\tau)\left(x',y\right)\,\Pi\left(y,x''\right)\,
\mathrm{d}V_X(y).
\end{equation}

Using (\ref{eqn:kernel_composition}) and (\ref{eqn:kernel_composition_Pi}) 
we obtain for (\ref{eqn:traveling_average_wave_operator}) along the diagonal:
\begin{eqnarray}
 \label{eqn:kernel_composition_Pi_average}
\lefteqn{S_{\chi\cdot e^{-i\lambda(\cdot)}}^{(\varpi)}\left(x,x\right)}\\
&=&d_\varpi\cdot \int_G\,\int_X\,\int_{-\epsilon}^{\epsilon}\,\chi(\tau)\,e^{-i\lambda\tau}\,
\overline{\chi_\varpi (g)}\,U(\tau)\left(x,\mu^X_{g}(y)\right)\,
\Pi\left(y,x\right)\nonumber\\
&&\cdot \mathrm{d}\tau\,\mathrm{d}V_X(y)\,\mathrm{d}V_G(g).\nonumber
\end{eqnarray}

Let $X_1\subseteq X$ be an arbitrarily small open neighborhood of $x\in X$, and let $\varrho_x\in \mathcal{C}_0^\infty(X_1)$ 
be identically $=1$ in an open neighborhood $X_2\Subset X_1$ of $x$. Write
$$
S_{\chi\cdot e^{-i\lambda(\cdot)}}^{(\varpi)}\left(x,x\right)=S_{\chi\cdot e^{-i\lambda(\cdot)}}^{(\varpi)}\left(x,x\right)'
+S_{\chi\cdot e^{-i\lambda(\cdot)}}^{(\varpi)}\left(x,x\right)'',
$$
where in the former summand the integrand in (\ref{eqn:kernel_composition_Pi_average}) has been multiplied by $\varrho _z(y)$,
and in the latter by $1-\varrho_x(y)$.

\begin{lem}
\label{eqn:cut_off_1}
As $\lambda\rightarrow\infty$, we have
$$S_{\chi\cdot e^{-i\lambda(\cdot)}}^{(\varpi)}\left(x,x\right)''=O\left(\lambda^{-\infty}\right).$$
\end{lem}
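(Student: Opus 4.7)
The plan is to exploit the fact that the factor $(1-\varrho_x)$ kills the diagonal singular support of $\Pi(\cdot,x)$, so that
$$F(y) := \Pi(y,x)\bigl(1-\varrho_x(y)\bigr)$$
is a bona-fide $\mathcal{C}^\infty$ function on $X$. Once this is in hand, the assertion should follow from a standard functional-calculus estimate for the smoothing operator $A_\lambda := \widehat{\chi}(\lambda I - Q)$ applied to a smooth input.

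First, I will perform the volume-preserving substitution $z = \mu_g^X(y)$ in (\ref{eqn:kernel_composition_Pi_average}) and exchange the order of integration. Setting $F_g(z) := F(\mu^X_{g^{-1}}(z))$ and recognising
$$\int \chi(\tau)\,e^{-i\lambda\tau}\,U(\tau)\,\mathrm{d}\tau \;=\; \widehat{\chi}(\lambda I - Q) \;=:\; A_\lambda,$$
this would recast the assertion as
$$S^{(\varpi)}_{\chi\cdot e^{-i\lambda(\cdot)}}(x,x)'' \;=\; d_\varpi \int_G \overline{\chi_\varpi(g)}\,[A_\lambda F_g](x)\,\mathrm{d}V_G(g),$$
so it suffices to prove that $[A_\lambda F_g](x) = O(\lambda^{-\infty})$ uniformly in $g\in G$. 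Since $\mu$ is smooth and $G$ is compact, the family $\{F_g\}_{g\in G}$ is bounded in every Sobolev norm uniformly in $g$.

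Next, I will expand $F_g = \sum_j c_j^g\,f_j$ in the $L^2(X)$-orthonormal eigenbasis $(f_j)$ of $Q$, where $Q f_j = \beta_j f_j$ and $|\beta_j|\to\infty$. Self-adjointness of $Q$ gives the uniform-in-$g$ Fourier coefficient estimate
$$|c_j^g| \;=\; |\beta_j|^{-N}\,\bigl|\langle Q^N F_g,\,f_j\rangle\bigr| \;\le\; C_N\,(1+|\beta_j|)^{-N} \qquad(\forall\, N\ge 0),$$
obtained by integrating $N$ times against $Q$. Combining this with the classical pointwise Weyl bound $|f_j(x)|\lesssim (1+|\beta_j|)^d$ for a first-order elliptic self-adjoint operator on the $(2d+1)$-dimensional manifold $X$, and with the Schwartz decay of $\widehat{\chi}$, I would obtain
$$\bigl|[A_\lambda F_g](x)\bigr| \;\lesssim\; \sum_j \frac{(1+|\beta_j|)^{d-N}}{(1+|\lambda-\beta_j|)^M}$$
for arbitrary $N, M$. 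A standard dyadic decomposition, using the Weyl count $\#\{j:|\beta_j|\le r\}\sim c\,r^{2d+1}$, then shows the right-hand side is $O(\lambda^{-\infty})$ as $\lambda\to +\infty$, uniformly in $g$.

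The one technical point requiring care is the uniformity in $g\in G$, which rests on the smoothness of $\mu^X$ and on the compactness of $G$ — both available by hypothesis. There is no genuine microlocal obstacle here, precisely because the cutoff $(1-\varrho_x)$ has already defused the only singularity in sight, namely the diagonal singularity of $\Pi(\cdot,x)$ at $x$.
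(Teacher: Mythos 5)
Your argument is correct, and the crucial step is shared with the paper: after multiplying by $1-\varrho_x$, the diagonal singularity of $\Pi(\cdot,x)$ is killed and $F_x=(1-\varrho_x)\,\Pi(\cdot,x)$ is genuinely $\mathcal{C}^\infty$. But the two proofs then diverge. The paper does not pass to the eigenfunction expansion at all: it simply observes that $\tau\mapsto U(\tau)(F_x)\left(\mu^X_{g^{-1}}(x)\right)$ is a $\mathcal{C}^\infty$ function of $\tau$ (the wave group carries a smooth input to a smooth family of smooth functions), so $\Gamma_x(\tau)=\chi(\tau)\int_G\overline{\chi_\varpi(g)}\,U(\tau)(F_x)\left(\mu^X_{g^{-1}}(x)\right)\mathrm{d}V_G(g)$ lies in $\mathcal{C}^\infty_0(\mathbb{R})$, and its Fourier transform $\widehat{\Gamma}_x$ --- which is exactly $S_{\chi\cdot e^{-i\lambda(\cdot)}}^{(\varpi)}(x,x)''/d_\varpi$ --- is of rapid decrease. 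That is essentially a two-line argument. You instead recast the quantity as $d_\varpi\int_G\overline{\chi_\varpi(g)}\,[\widehat{\chi}(\lambda I-Q)F_g](x)\,\mathrm{d}V_G(g)$, expand $F_g$ in the eigenbasis of $Q$, exploit the rapid decay of the Fourier coefficients $c_j^g$ (by iterating $Q$, uniformly in $g$), bound $|f_j(x)|$ by the pointwise Weyl estimate $|f_j(x)|\lesssim(1+|\beta_j|)^{d}$ on the $(2d+1)$-dimensional manifold $X$, invoke the Schwartz decay of $\widehat{\chi}$, and close by dyadic summation against the Weyl counting function. This route is sound and gives more explicit quantitative control, but it imports the pointwise Weyl bound --- itself a nontrivial theorem --- as an external ingredient, whereas the paper's argument needs only the elementary fact that $U(\tau)$ preserves smoothness jointly in $\tau$ and the space variable. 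The paper's proof is the more economical of the two; yours is more self-contained on the microlocal side but heavier on spectral-theoretic input.
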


\begin{proof}
On the support of $1-\varrho_x$, we have $\mathrm{dist}_X(y,x)\ge c$ for some fixed $c>0$. Thus, 
$F_x(y)=:\big(1-\varrho_x (y)\big)\,\Pi(y,x)$
is $\mathcal{C}^\infty$ function of $y$. Therefore, the function
$$
\Gamma_{x}(\tau)=: \chi(\tau)\cdot\int_G \overline{\chi_\varpi (g)}\,U(\tau)(F_x)\left(\mu^X_{g^{-1}}(x)\right)\,\mathrm{d}V_G(g)
$$
is $\mathcal{C}^\infty$ and compactly supported, so that its Fourier transform $\widehat{\Gamma}_{x}$ is of rapid decrease.
On the other hand, we have
\begin{eqnarray*}
S_{\chi\cdot e^{-i\lambda(\cdot)}}^{(\varpi)}\left(x,x\right)''=
d_\varpi\cdot \int_{-\epsilon}^{\epsilon}\,e^{-i\lambda\tau}\,\widetilde{F}_{x}(\tau)\,\mathrm{d}\tau=
d_\varpi\cdot\widehat{\Gamma}_{x}(\lambda).
\end{eqnarray*}

\end{proof}

Thus we are reduced to considering the asymptotics of $S_{\chi\cdot e^{-i\lambda(\cdot)}}^{(\varpi)}\left(x,x\right)'$. On the support
of $\Pi'(y,x)=:\varrho_x(y)\,\Pi(y,x)$, we may represent $\Pi$ as an FIO using (\ref{eqn:microlocal_Pi}).

We can adopt the same principle to make one more similar reduction.
In fact, if $\epsilon$ is small enough then the singular support of $U(\tau)$ for $\chi(\tau)\neq 0$
lies within a small tubular neighborhood of the diagonal in $X\times X$. Thus, the contribution to the asymptotics
of $S_{\chi\cdot e^{-i\lambda(\cdot)}}^{(\varpi)}\left(x,x\right)'$ coming from the locus where 
$\mathrm{dist}_X\left(\mu^X_{g}(y),x\right)\ge a>0$ for some suitable small $a>0$ is negligible.  
%By an argument similar to the previous one,
%we only lose a negligible contribution to the asymptotics if the integrand in (\ref{eqn:kernel_composition_Pi_average}) 
%is further multiplied
%by a $G$-invariant cut-off $\rho_x(y)$ supported near $G\cdot x$, and identically $=1$ in a tubular neighborhood of $G\cdot x$.
Since $y$ itself (for $\varrho_x(y)\neq 0$) belongs to a small neighborhood of $x$, in order for
$\mu^X_{g}(y)$ to belong to a small neighborhood of $x$ we need to assume that $g$ belongs to a small neighborhood
of the stabilizer $G^X_m\subseteq G$ of $x$. Therefore, we only lose a negligible contribution to the asymptotics,
if the integrand in (\ref{eqn:kernel_composition_Pi_average}) is further multiplied by a cut-off of the form
$\rho_x(g)$, supported in a small neighborhood of $G^X_m$, and identically $=1$ sufficiently close to
$G^X_m$.

If $G^X_m=\big\{g_1=e,g_2,\ldots,g_r\big\}$ (here $r=r_m$ and $m=\pi(x)$), we may assume for simplicity that
$\rho_x(g)=\sum_{j=1}^r\rho(g_j\,g)$, where $\rho$ is a fixed cut-off supported in a small neighborhood of 
the unit $e\in G$.

Furthermore, on the support of the cut-offs introduced, up to smoothing terms contributing negligibly
to the asymptotics $U(\tau)$ and $\Pi$
may be described as FIOs using (\ref{eqn:microlocal_V_tau}) and (\ref{eqn:microlocal_Pi}). 

Thus we conclude that asymptotically for $\lambda\rightarrow \infty$
\begin{eqnarray}
\label{eqn:S_FIO_1}
\lefteqn{S_{\chi\cdot e^{-i\lambda(\cdot)}}^{(\varpi)}\left(x,x\right)}\\
&\sim& \frac{d_\varpi}{(2\pi)^{2d+1}}\cdot \int_0^{+\infty}\,\int_{\mathbb{R}^{2d+1}}\,\int_G\,\int_X\,
\int_{-\epsilon}^{\epsilon}\,e^{i[\varphi(\tau,x,\eta)-\langle g\cdot y,\eta\rangle
+t\psi(y,x)
-\lambda\tau]}\,
\nonumber\\
&&\cdot \chi(\tau)\,\overline{\chi_\varpi (g)}\,\varrho_x(y)\,\rho_x(g)\,
%U(\tau)\left(x,\mu^X_{g}(y)\right)\,
a(\tau,x,g\cdot y,\eta)\,\,s(y,x,t)\nonumber\\
&&\cdot \mathrm{d}\tau\,\mathrm{d}V_X(y)\,\mathrm{d}V_G(g)\,\mathrm{d}\eta\,\mathrm{d}t  \nonumber\\
&=&\frac{d_\varpi}{(2\pi)^{2d+1}}\cdot \int_0^{+\infty}\,\int_{\mathbb{R}^{2d+1}}\,\int_G\,\int_X\,
\int_{-\epsilon}^{\epsilon}\,e^{i\,\Psi_1}\,\mathcal{A}_1\,\mathrm{d}\tau\,\mathrm{d}V_X(y)\,
\mathrm{d}V_G(g)\,\mathrm{d}\eta\,\mathrm{d}t,\nonumber
\end{eqnarray}
where we have set $g\cdot y=:\mu^X_{g}(y)$. Furthermore in view of 
(\ref{eqn:Hamilton_Jacobi}) the phase $\Psi$ is given by
\begin{eqnarray}
 \label{eqn:phase_1}
 \Psi_1&=:&\varphi(\tau,x,\eta)-\langle g\cdot y,\eta\rangle
+t\psi(y,x)
-\lambda\tau\\
&=& \langle x,\eta\rangle+\tau\,\sigma_Q(x,\eta)-\langle g\cdot y,\eta\rangle
+t\psi(y,x)
-\lambda\tau+\|\eta\|\,O\left(\tau^2\right).    \nonumber
\end{eqnarray}
The amplitude on the other had is given by
\begin{eqnarray}
 \label{eqn:amplitude_1}
 \mathcal{A}_1&=:&\chi(\tau)\,\overline{\chi_\varpi (g)}\,\varrho_x(y)\,\rho_x(g)\,
 a(\tau,x,g\cdot y,\eta)\,\,s(y,x,t).
\end{eqnarray}

For $\lambda\ll 0$, we have
$\partial_\tau\Psi_1\ge C\,(\|\eta\|+|\lambda|)$,
and integrating by parts in $\mathrm{d}\tau$ shows that the right hand side
of (\ref{eqn:S_FIO_1}) is $O\left(\lambda^{-\infty}\right)$ for $\lambda\rightarrow-\infty$.
This proves the first statement of the Proposition.

Let us then focus on the asymptotics for $\lambda\rightarrow+\infty$. To this end,
let us operate the change of variables $t\mapsto \lambda\,t$, $\eta\mapsto \lambda\,\eta$,
so that (\ref{eqn:S_FIO_1}) may be rewritten:

\begin{eqnarray}
\label{eqn:S_FIO_2}
\lefteqn{S_{\chi\cdot e^{-i\lambda(\cdot)}}^{(\varpi)}\left(x,x\right)}\\
&\sim& 2\pi\,d_\varpi\,\left(\frac{\lambda}{2\pi}\right)^{2d+2}\cdot \int_0^{+\infty}\,\int_{\mathbb{R}^{2d+1}}\,\int_G\,\int_X\,
\int_{-\epsilon}^{\epsilon}\,e^{i\,\lambda\Psi_2}\,\mathcal{A}_2\nonumber\\
&&\cdot\mathrm{d}\tau\,\mathrm{d}V_X(y)\,
\mathrm{d}V_G(g)\,\mathrm{d}\eta\,\mathrm{d}t,\nonumber
\end{eqnarray}
where now
\begin{eqnarray}
 \label{eqn:phase_2}
 \Psi_2&=:&\langle x-g\cdot y,\eta\rangle+\tau\,\sigma_Q(x,\eta)
 +t\psi(y,x)-\tau+\|\eta\|\,O\left(\tau^2\right),   
\end{eqnarray}
\begin{eqnarray}
 \label{eqn:amplitude_2}
 \mathcal{A}_2&=:&\chi(\tau)\,\overline{\chi_\varpi (g)}\,\varrho_x(y)\,\rho_x(g)\,
 a\big(\tau,x,g\cdot y,\lambda\,\eta\big)\,\,s\big(y,x,\lambda\,t\big).
\end{eqnarray}

If we set $\eta=r\,\Omega$, with $r>0$ and $\Omega\in S^{2d}\subseteq \mathbb{R}^{2d+1}$
(the unit sphere), we may further rewrite (\ref{eqn:phase_2}) as 
\begin{eqnarray}
\label{eqn:S_FIO_3}
\lefteqn{S_{\chi\cdot e^{-i\lambda(\cdot)}}^{(\varpi)}\left(x,x\right)}\\
&\sim& 2\pi\,d_\varpi\,\left(\frac{\lambda}{2\pi}\right)^{2d+2}\cdot \int_0^{+\infty}\,
\int_0^{+\infty}\,\int_{S^{2d}}\,\int_G\,\int_X\,
\int_{-\epsilon}^{\epsilon}\,e^{i\,\lambda\Psi_3}\,\mathcal{A}_3\nonumber\\
&&\cdot r^{2d}\mathrm{d}\tau\,\mathrm{d}V_X(y)\,
\mathrm{d}V_G(g)\,\mathrm{d}\Omega\,\mathrm{d}r\,\mathrm{d}t,\nonumber
\end{eqnarray}
with
\begin{eqnarray}
 \label{eqn:phase_3}
 \Psi_3&=:& r\langle x-g\cdot y,\Omega\rangle+r\,\tau\,\sigma_Q(x,\Omega)
+t\psi(y,x)
-\tau+r\,O\left(\tau^2\right),   
\end{eqnarray}
\begin{eqnarray}
 \label{eqn:amplitude_3}
 \mathcal{A}_3&=:&\chi(\tau)\,\overline{\chi_\varpi (g)}\,\varrho_x(y)\,\rho_x(g)\,
 a\big(\tau,x,g\cdot y,\lambda\,r\,\Omega\big)\,\,s\big(y,x,\lambda\,t\big).
\end{eqnarray}

So far we have not made a specific choice of local coordinates. It is now convenient to assume
that the computation is being carried out in a system of Heisenberg local coordinates centered at $x$,
and we write $y=x+(\theta,\mathbf{v})$, with the replacement
\begin{equation}
 \label{eqn:replacement_X}
 \int_X\,\mathrm{d}V_X(y)\,\longrightarrow\,\int_{\mathbb{R}^{2d}}\int_{-\pi}^\pi\,
\mathcal{V}_M(\theta,\mathbf{v})\,\mathrm{d}\theta\,\mathrm{d}\mathbf{v},
\end{equation}
where $\mathcal{V}_M$ is the local coordinate expression of the volume density on $M$, 
and in particular
$\mathcal{V}_M(\theta,\mathbf{0})=1/(2\pi)$.

We shall write accordingly 
\begin{equation}
 \eta=\left(\eta',\eta''\right)=r\,\Omega=r\cdot\left(\Omega',\mathbf{\Omega}\right)
 \in \mathbb{R}\times \mathbb{R}^{2d}\,\,\,\mathrm{with}\,\,\,
 \left(\Omega'\right)^2+\left\|\mathbf{\Omega}\right\|^2=1,
\end{equation}
and make the replacement
\begin{equation}
 \label{eqn:replacement_eta}
 \int_{\mathbb{R}^{2d+1}}\mathrm{d}\eta\,\longrightarrow\,
 \int_0^{+\infty}r^{2d}\,\mathrm{d}r\,\int_{S^{2d}}\mathrm{d}\Omega
\end{equation}

In Heisenberg local coordinates, $\eta=(1,\mathbf{0})$ corresponds the cotangent vector
$\alpha_x$. Using this and (\ref{eqn:wave_front_Pi}), (\ref{eqn:wave_front_tau})
one can prove the following:

\begin{lem}
\label{lem:cut_off_omega}
 Only a negligible contribution to the asymptotics is lost in (\ref{eqn:S_FIO_3}),
if the amplitude $\mathcal{A}_3$ is multiplied by a cut off function $\gamma_1(\Omega)$,
compactly supported in a small neighborhood $S_1\subseteq S^{2d}$
of $(1,\mathbf{0})$ and identically $=1$
in a smaller neighborhood of $(1,\mathbf{0})$.
\end{lem}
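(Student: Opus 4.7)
The plan is a non-stationary phase argument motivated by the wave front analysis. By Lemma \ref{lem:invariant_Sigma} and the composition calculus for wave fronts, $U(\tau)\circ \Pi$ has wave front contained in $\Sigma\times\Sigma$, and in HLC centered at $x$ the Reeb covector $\alpha_x$ corresponds to $(1,\mathbf{0})\in \mathbb{R}^{2d+1}$. Hence microlocally $\Omega$ must concentrate near $(1,\mathbf{0})$, and the contribution from outside a small neighborhood of this direction should be $O(\lambda^{-\infty})$.

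First I compute the relevant derivatives of $\Psi_3$ in HLC. Writing $y=x+(\theta',\mathbf{v}')$ and using expansion (\ref{eqn:expansion_sz}) together with $\psi_2(\mathbf{v}',\mathbf{0})=-\tfrac{1}{2}\|\mathbf{v}'\|^2$, differentiation at $y=x$ and $\tau=0$ yields, modulo $O(|y-x|+|\tau|)$ corrections,
$$
\partial_{\theta'}\Psi_3 = -r\,\Omega'_g + t,\qquad \partial_{\mathbf{v}'}\Psi_3 = -r\,\mathbf{\Omega}_g,\qquad \partial_{\tau}\Psi_3 = r\,\sigma_Q(x,\Omega)-1,
$$
where $(\Omega'_g,\mathbf{\Omega}_g)=A_g^T\Omega$. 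Since the cutoff $\rho_x$ confines $g$ to a neighborhood of the stabilizer $G_m^X$, whose elements preserve both $\alpha$ and $\partial/\partial\theta$, each Jacobian $A_g$ is approximately block-diagonal with respect to the vertical/horizontal splitting, so $\Omega'_g\approx \Omega'$ and $|\mathbf{\Omega}_g|\approx |\mathbf{\Omega}|$.

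I will then split the $r$-range into $r\geq\delta$ and $r\leq \delta$ for a small $\delta>0$. For $r\geq\delta$, the spherical identity $\|\Omega-(1,\mathbf{0})\|^2=2(1-\Omega')$ implies $\Omega'\leq 1-c'$ on $\mathrm{supp}(1-\gamma_1)$ for some $c'>0$; a further subdivision---if $\Omega'\geq -\tfrac{1}{2}$ then $|\mathbf{\Omega}|^2=(1-\Omega')(1+\Omega')\geq c'/2$ so $|\partial_{\mathbf{v}'}\Psi_3|\geq cr$; if $\Omega'<-\tfrac{1}{2}$ then the positivity of $t$ reinforces $-r\Omega'>r/2$ so $|\partial_{\theta'}\Psi_3|\geq r/2$---yields $|\partial_y\Psi_3|\geq cr$ uniformly. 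Iterating the standard operator $L=(i\lambda)^{-1}|\partial_y\Psi_3|^{-2}\sum_j\overline{\partial_{y_j}\Psi_3}\,\partial_{y_j}$ (so $L(e^{i\lambda\Psi_3})=e^{i\lambda\Psi_3}$) and transposing onto the amplitude produces after $N$ iterations a factor $(\lambda r)^{-N}$ times an amplitude still in the same classical symbol class. In the small-$r$ range $r\leq\delta$ with $\delta$ small enough, $|\partial_\tau\Psi_3|\geq 1/2$, so iterated integration by parts in $\tau$, using the compact support of $\chi$, produces a factor $\lambda^{-N}$.

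The main obstacle will be the uniform integrability of the bound over the remaining variables. In the large-$r$ regime the integrand is dominated, after $N$ IBPs, by $(\lambda r)^{-N}$ times the $r^{2d}$ Jacobian, the order-$0$ amplitude $a$, and the order-$d$ amplitude $s$; choosing $N$ larger than $3d+1$ makes the $r$-integral converge at infinity, while the $t$-integral is tamed by the exponential damping $|e^{i\lambda t\psi(y,x)}|=e^{-\lambda t\,\mathrm{Im}\,\psi(y,x)}$ with $\mathrm{Im}\,\psi(y,x)\gtrsim |y-x|^2$ on a neighborhood of $y=x$ (combined with the cutoff $\varrho_x$). In the small-$r$ regime the $r$-range is already bounded, and after the $\tau$-IBP the remaining integrals are handled analogously. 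Together these yield the claimed $O(\lambda^{-\infty})$ bound.
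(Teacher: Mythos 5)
Your strategy---non-stationary phase based on the wave-front picture (the only covector that contributes is $\alpha_x$, which in HLC is $(1,\mathbf{0})$), with a case split into small $r$ (use $\partial_\tau\Psi_3$), large $r$ with $|\mathbf{\Omega}|$ bounded below (use $\partial_{\mathbf{v}_y}\Psi_3$), and large $r$ with $\Omega'<-1/2$ (use $\partial_{\theta_y}\Psi_3$)---is the right one and is surely close to what the cited Lemma 2.2 of [P2] does. The gradient computations at $y=x$, $\tau=0$, and the observations that the $G$-action commutes with the $S^1$-action (so the $\theta$-component of the Jacobian is trivial) and $A_\ell$ is unitary, are correct, and the three-way case distinction covers the complement of a conic neighbourhood of $(1,\mathbf{0})$.

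The gap is in the convergence argument, not in the phase analysis. You propose to absolutely bound the integrand after $N$ integrations by parts, with the $r$-integral tamed by $(\lambda r)^{-N}r^{2d}$ and the $t$-integral tamed by $|e^{i\lambda t\psi}|=e^{-\lambda t\,\mathrm{Im}\,\psi}$ with $\mathrm{Im}\,\psi\gtrsim\|y-x\|^2$ ``combined with the cutoff $\varrho_x$.'' This last step fails: $\varrho_x\equiv 1$ near $y=x$, and $\mathrm{Im}\,\psi(x,x)=0$, so the exponential damping vanishes exactly where you need it. Quantitatively, the iterated integral $\int_y\int_0^\infty(\lambda t)^d e^{-c\lambda t\|y-x\|^2}\,\mathrm{d}t\,\mathrm{d}y\sim\lambda^{-1}\int_y\|y-x\|^{-2(d+1)}\,\mathrm{d}y$ diverges near the diagonal in dimension $\dim X=2d+1$, so absolute integrability never holds, before or after your $y$-IBPs. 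Moreover, the operator $L$ you use has $\nabla_y\Psi_3$ (a complex vector) in it, and each transposition introduces $\partial_y^2\Psi_3$, which contains the contribution $t\,\partial^2_y\psi$; for large $t$ this is unbounded, so the IBP constants grow with $t$ and are not uniformly controlled. The correct route is the complex-phase non-stationary phase lemma: one shows $\|\nabla_y\Re\Psi_3\|+\mathrm{Im}\,\Psi_3\geq c>0$ on the support of $1-\gamma_1$, splits the domain into $\{\mathrm{Im}\,\Psi_3\geq c'\}$ (exponentially damped) and its complement (where the \emph{real} gradient is bounded below), and on the latter interprets the $t$-integral as the Szeg\H{o} kernel---i.e.\ a distribution paired against a smooth, $\lambda$-polynomially bounded test function---rather than an absolutely convergent integral. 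As written, the passage from ``gradient bounded below'' to ``$O(\lambda^{-\infty})$'' is asserted but not established.
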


\begin{proof}
See Lemma 2.2 of \cite{p_weyl}.
\end{proof}

How small the neighborhoods in Lemma \ref{lem:cut_off_omega} may be chosen depends on how small
$\epsilon$ is. 

Now $S_1\subseteq S_+$, the upper hemisphere, and on $S_+$
we have a system of local coordinates for $S^{2d}$, given by
$\mathbf{\Omega}\in B_{2d}(\mathbf{0},1)\mapsto \big(\sqrt{1-\|\mathbf{\Omega}\|^2},\,\mathbf{\Omega}\big)$;
we can then make the replacement
\begin{equation}
 \label{eqn:replaced_S_+}
 \int_{S^{2d}}\,\mathrm{d}\Omega\longrightarrow \int_{B_{2d}(\mathbf{0},1)}\mathcal{V}_{S}(\mathbf{\Omega})\,\mathrm{d}\mathbf{\Omega},
\end{equation}
where $\mathcal{V}_{S}(\mathbf{\Omega})$ is the local coordinate expression of the volume form on the sphere,
and integration is compactly supported.

Integration in
$\mathrm{d}t\,\mathrm{d}r$ may also assumed to be compactly supported:

\begin{lem}
 \label{lem:cut_off_t_r}
 Only a negligible contribution to the asymptotics is lost in (\ref{eqn:S_FIO_3}),
if for some $D\gg 0$ the amplitude $\mathcal{A}_3$ is further multiplied by a cut off function $\gamma_2(t,r)$,
compactly supported in $(1/D,D)^2$ and identically $=1$
in $(2/D,D/2)^2$.
\end{lem}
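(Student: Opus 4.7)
The strategy is integration by parts in $\tau$, made possible by the fact that the earlier cut-off from Lemma \ref{lem:cut_off_omega} forces $\Omega$ to lie near $(1,\mathbf{0})$, so that $\sigma_Q(x,\Omega)$ stays close to $\varsigma_T(x)>0$. From (\ref{eqn:phase_3}),
$$
\partial_\tau \Psi_3 \;=\; r\,\sigma_Q(x,\Omega)\;-\;1\;+\;r\,O(\tau),
$$
and this derivative is bounded \emph{away} from zero precisely in the two regimes $r\ge D$ and $r\le 1/D$ (uniformly in the other variables), as soon as $\epsilon$ is small enough and $D$ is chosen so that $D\,\varsigma_T(x)\ge 4$.

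First I would treat the region $r\ge D$. Here $|\partial_\tau \Psi_3|\ge r\,\varsigma_T(x)/4$, and the standard first-order operator
$$
L_\tau \;=\; \frac{1}{i\,\lambda\,\partial_\tau\Psi_3}\,\partial_\tau
$$
satisfies $L_\tau\bigl(e^{i\lambda\Psi_3}\bigr)=e^{i\lambda\Psi_3}$. Repeated integration by parts with the formal adjoint ${}^t L_\tau$ yields, after $N$ steps, $N$ factors of $(\lambda r)^{-1}$, the remaining derivatives of the amplitude being controlled by the classical symbol estimates on $a$ and $s$ (noting that on the support of the cut-offs $\lambda r$ stays large). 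The measure $r^{2d}\,\mathrm{d}r$ and the $\lambda^{2d+2}$ prefactor in (\ref{eqn:S_FIO_3}) are easily absorbed: the contribution from $r>D$ is bounded by $C\,\lambda^{2d+2-N}\int_D^{\infty}r^{2d-N}\,\mathrm{d}r=O(\lambda^{2d+2-N})$, hence $O(\lambda^{-\infty})$ as $N\to\infty$. For $r\le 1/D$, the same operator $L_\tau$ now gains factors $\lambda^{-1}$ from the uniform lower bound $|\partial_\tau\Psi_3|\ge 1/2$; since the $r$-integration is over a bounded set and the $t$-integration can be restricted jointly (see next step), the analogous bound gives $O(\lambda^{-\infty})$.

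The $t$-cutoff is handled next. Since $\partial_\tau\Psi_3$ is independent of $t$, the two arguments above apply verbatim to any region where $r\notin(1/D,D)$, regardless of $t$. It remains to treat $r\in(1/D,D)$ with $t\notin(1/D,D)$. For $t\ge D$, I would use the positive-type character of the Szeg\H o phase, namely $\mathrm{Im}\,\psi(y,x)\ge c\,\mathrm{dist}_X(y,x)^2\ge 0$, which follows from (\ref{eqn:expansion_sz}); the factor $e^{-\lambda t\,\mathrm{Im}\,\psi(y,x)}$ then provides super-polynomial damping away from the diagonal, and close to the diagonal a standard partitioning $\mathrm{dist}_X(y,x)\lesssim \lambda^{-1/2+\delta}$ combined with IBP in $\tau$ in a sub-region of $r$ bounded away from the critical value $1/\sigma_Q(x,\Omega)$ closes the argument. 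For $t\le 1/D$, the amplitude $s(y,x,\lambda t)$ is polynomially bounded by $C(1+\lambda/D)^d$, the $t$-interval is bounded, and IBP in $\tau$ again produces $O(\lambda^{-\infty})$.

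The main obstacle is the simultaneous regime $t\ge D$ and $r$ near the critical value $r_*=1/\sigma_Q(x,\Omega)$, where neither $\tau$-IBP nor a direct $t$-IBP (since $\psi$ vanishes on the diagonal) is individually sufficient. This is resolved by combining the positivity of $\mathrm{Im}\,\psi$ away from the diagonal with the composed-FIO stationary constraint, forcing $t\approx r$ at the actual critical set and thus confining the non-negligible contribution to a bounded $t$-range; this is the same mechanism exploited in Lemma~2.2 of \cite{p_weyl}, to which I would appeal with only cosmetic changes to accommodate the equivariant averaging $\int_G \overline{\chi_\varpi(g)}\,\rho_x(g)\,\mathrm{d}V_G(g)$, which does not interact with the $(t,r)$ variables.
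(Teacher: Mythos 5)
Your treatment of the $r$-cutoff is correct and matches the paper: $\partial_\tau\Psi_3 = r\,\sigma_Q(x,\Omega)-1+r\,O(\tau)$ is bounded away from zero precisely when $r$ is large or small, and repeated integration by parts in $\tau$ (using the symbol estimates to absorb the $r^{2d}$ weight and the $\lambda^{2d+2}$ prefactor) gives $O(\lambda^{-\infty})$ outside a compact $r$-range. This is essentially the paper's own argument, which is a one-liner citing ``integration by parts in $(\tau,y)$'' along the lines of Lemma~2.3 of \cite{p_weyl}.

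Your treatment of the $t$-cutoff, however, has a genuine gap in the regime $t\le 1/D$ with $r\in(1/D,D)$. You assert that the boundedness of $s(y,x,\lambda t)$ together with ``IBP in $\tau$ again produces $O(\lambda^{-\infty})$''; but the $r$-range $(1/D,D)$ necessarily contains the critical value $r_*=1/\sigma_Q(x,\Omega)$, at which $\partial_\tau\Psi_3$ vanishes, so integration by parts in $\tau$ gains nothing there. Boundedness of the amplitude and of the $t$-interval merely bound the contribution polynomially in $\lambda$; they do not give decay. For $t\ge D$ you eventually reach the right mechanism (appeal to \cite{p_weyl}, ``composed-FIO stationary constraint forcing $t\approx r$''), but your intermediate claim that near the diagonal the case is closed by ``IBP in $\tau$ in a sub-region of $r$ bounded away from $r_*$'' is circular, since after the $r$-cutoff no further restriction on $r$ is available.

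What both regimes actually require is the integration by parts in $y$ that the paper invokes --- concretely, in the fiber (angular) coordinate $\theta$ after passing to Heisenberg local coordinates $y=x+(\theta,\mathbf{v})$. From (\ref{eqn:expansion_sz}), $\partial_\theta\big(t\,\psi(x+(\theta,\mathbf{v}),x)\big)\big|_{\theta=0,\mathbf{v}=\mathbf{0}}=t$, while the term $-r\langle g\cdot y,\Omega\rangle$ contributes $-r\Omega'$; so on the residual critical locus $\partial_\theta\Psi\approx t-r\Omega'$. With $\Omega'\approx 1$ (from Lemma~\ref{lem:cut_off_omega}) and $r$ confined to $(1/D,D)$, this derivative is bounded below both for $t\ge D$ and for $t\le 1/D$, and iterated IBP in $\theta$ supplies the missing rapid decay. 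Inserting this observation makes your $t\le 1/D$ case correct and replaces the hand-waving in your $t\ge D$ case with a clean argument; the appeal to \cite{p_weyl} is then exactly the paper's citation.
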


\begin{proof}
This follows integrating by parts in $(\tau,y)$, by arguments on the line of those
in Lemma 2.3 of \cite{p_weyl}.
\end{proof}

At this point, integrating parts in $\mathrm{d}t$ will show that integration
in $y$ may be restricted to a suitable shrinking neighborhood of $x$; more precisely, we have:

\begin{lem}
\label{lem:shrinking_v_theta}
 For any $C,\,a>0$ the locus where $\|(\theta,\,\mathbf{v})\|\ge C\,\lambda^{a-1/2}$ contributes
 negligibly to the asymptotics of (\ref{eqn:S_FIO_3}).
\end{lem}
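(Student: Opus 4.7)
The plan is to exploit the positivity of $\operatorname{Im}\psi$ in the Szeg\"{o} phase to obtain super-polynomial decay of the integrand on the indicated region. Unlike Lemmas \ref{lem:cut_off_omega} and \ref{lem:cut_off_t_r}, no genuine integration by parts will be required: a single pointwise majorization of $|e^{i\lambda\Psi_3}|$ suffices, because the resulting exponential decay dominates any polynomial blow-up of the amplitude.

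First, I would unpack $\operatorname{Im}\psi$ using the universal expansion (\ref{eqn:expansion_sz}) specialized to $\theta'=0$, $\mathbf{v}'=\mathbf{0}$, together with the identity $\psi_2(\mathbf{v},\mathbf{0})=-\tfrac12\|\mathbf{v}\|^2$. A short computation yields
$$
\operatorname{Im}\psi\bigl(x+(\theta,\mathbf{v}),x\bigr)=(1-\cos\theta)+\tfrac12\|\mathbf{v}\|^2\cos\theta+O(\|\mathbf{v}\|^3),
$$
so that, after shrinking the support of $\varrho_x$ if necessary, one has a uniform estimate $\operatorname{Im}\psi(y,x)\ge c_0\,\|(\theta,\mathbf{v})\|^2$ throughout the domain of integration. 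Next, I would observe that every other contribution to the phase $\Psi_3$ in (\ref{eqn:phase_3}) is real-valued: the linear term $r\langle x-g\cdot y,\Omega\rangle$, the principal-symbol term $r\tau\sigma_Q(x,\Omega)$ (which is real by self-adjointness of $Q$), $-\tau$, and the Hamilton--Jacobi remainder $r\,O(\tau^2)$. Therefore $\operatorname{Im}(\lambda\Psi_3)=\lambda t\,\operatorname{Im}\psi(y,x)\ge c_0\,\lambda t\,\|(\theta,\mathbf{v})\|^2$, and combining this with the lower bound $t\ge 1/D$ from Lemma \ref{lem:cut_off_t_r} and the hypothesis $\|(\theta,\mathbf{v})\|\ge C\lambda^{a-1/2}$ gives
$$
\bigl|e^{i\lambda\Psi_3}\bigr|\le \exp\bigl(-c_1\,\lambda^{2a}\bigr)
$$
for some $c_1>0$ depending only on $C$ and $D$.

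The rest of the argument is routine. On the relevant region the amplitude $\mathcal{A}_3$ is bounded in absolute value by a polynomial in $\lambda$ of degree at most $d$ (coming from the classical symbol $s(y,x,\lambda t)$, since $a(\tau,x,g\cdot y,\lambda r\Omega)$ is of order zero in its covariable), and the prefactor in (\ref{eqn:S_FIO_3}) contributes $\lambda^{2d+2}$; the remaining variables all range over compact sets thanks to the cut-offs already in force. Combined with the exponential bound above, this majorizes the contribution of the locus $\{\|(\theta,\mathbf{v})\|\ge C\lambda^{a-1/2}\}$ by $C_N\,\lambda^{-N}$ for every $N\ge 0$. The only step that requires some care is the uniformity of the quadratic lower bound on $\operatorname{Im}\psi$: the cubic remainder in $\mathbf{v}$ must be absorbed by the leading term $\tfrac12\|\mathbf{v}\|^2\cos\theta$, which forces the radius of the Heisenberg chart, equivalently the support of $\varrho_x$, to be fixed small enough once and for all, independently of $C$ and $a$. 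Apart from this standard bookkeeping, the argument is purely a consequence of the complex-positive character of the Szeg\"{o} phase in the Boutet de Monvel--Sj\"{o}strand description.
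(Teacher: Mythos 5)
Your proof is correct, and the final step is genuinely different from the paper's. Both arguments hinge on the same analytic input—the quadratic lower bound $\operatorname{Im}\psi(y,x)\gtrsim \mathrm{dist}(y,x)^2$—but the paper extracts decay from it by first noting $\big|\partial_t\Psi_3\big|=\big|\psi\big(x+(\theta,\mathbf{v}),x\big)\big|\ge \operatorname{Im}\psi\ge C''\lambda^{2a-1}$ and then iteratively integrating by parts in $\mathrm{d}t$ (each step gaining a factor $\lambda^{-2a}$), whereas you observe that $\Psi_3$ is linear in $t$ with coefficient $\psi$, that every other term in $\Psi_3$ (including the Hamilton–Jacobi remainder $r\,O(\tau^2)$, which is real because the generating function of a real Hamiltonian flow is real) is real, and that the resulting pointwise estimate $|e^{i\lambda\Psi_3}|\le e^{-c_1\lambda^{2a}}$ already kills any polynomial amplitude on the compact domain. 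Your route is a bit more economical: you never have to account for the effect of $\partial_t$ falling on the classical symbol $s(y,x,\lambda t)$. The one place where the paper is cleaner is the source of the quadratic lower bound: you re-derive $\operatorname{Im}\psi\ge c_0\,\|(\theta,\mathbf{v})\|^2$ from the near-diagonal expansion (\ref{eqn:expansion_sz}) and must absorb the cubic remainder by shrinking the chart, whereas the paper simply invokes Corollary 1.3 of \cite{bdm_sj}, which gives the estimate globally on $X\times X$ and sidesteps the bookkeeping you flag. In the end the two arguments deliver the same conclusion, and yours is valid as written.
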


\begin{proof}
By Corollary 1.3 of \cite{bdm_sj}, for some $C'>0$ we have at any $(y,z)\in X\times X$
$$
\Im \psi(y,z)\ge C\,\mathrm{dist}(y,z)^2.
$$
Therefore, on the locus where $\|(\theta,\,\mathbf{v})\|\ge C\,\lambda^{a-1/2}$
we have
$$
\big|\partial_t\Psi_3\big|=
\big|\psi\big(x+(\theta,\mathbf{v}),x\big)\big|\ge \Im \psi\big(x+(\theta,\mathbf{v})\big)
\ge C''\,\lambda^{2a-1}.
$$
Iteratively integrating by parts in $\mathrm{d}t$ then introduces at each step a factor
$\lambda^{-2a}$.
\end{proof}

To fix ideas, we shall take in the following $a=1/24$. 
Summing up, multiplying the amplitude in (\ref{eqn:S_FIO_3}) by
\begin{equation}
\label{eqn:defn_gamma}
 \beta_\lambda (\Omega,t,r,\theta,\mathbf{v})=:
\gamma_1(\Omega)\,\gamma_2(t,r)\,\gamma_3\left(\lambda^{11/24}\,\|(\theta,\mathbf{v})\|\right)
\end{equation}
does not alter the asymptotics.

We can now at least verify that (\ref{eqn:S_FIO_3}) is rapidly decreasing if $x$ does not belong to a \textit{fixed}
small tubular neighborhood of $X'$.

\begin{lem} 
Given $\epsilon>0$, there exists $\delta'>0$, which may chosen very small if $\epsilon$ is sufficiently small,
such that
$$
S_{\chi\cdot e^{-i\lambda(\cdot)}}^{(\varpi)}\left(x,x\right)
=O\left(\lambda^{-\infty}\right)
$$ for
$\mathrm{dist}_X(x,X')\ge \delta'$.
\end{lem}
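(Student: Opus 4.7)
The plan is to perform iterated integration by parts in the group variable $g$ in (\ref{eqn:S_FIO_3}), exploiting the fact that when $\pi(x)$ is bounded away from $M'=\Phi^{-1}(\mathbf{0})$, the gradient of $\Psi_3$ in $\mathfrak{g}$-directions is bounded below by a positive constant controlled by $\|\Phi(\pi(x))\|$. The cut-off $\rho_x$ already localizes $g$ near the finite stabilizer $G^X_m=\{g_1,\dots,g_r\}$ (where $m=\pi(x)$), and near each $g_j$ I write $g=g_j\exp(\xi)$ with $\xi\in\mathfrak{g}$ small, treating each resulting integral in $\xi$ separately.

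Only the term $r\langle x-g\cdot y,\Omega\rangle$ in (\ref{eqn:phase_3}) depends on $g$, and differentiating at $\xi=0$ gives
\[
\partial_\xi\Psi_3\big|_{\xi=0}\;=\;-r\,\big\langle d\mu^X_{g_j}\big(\xi_X(y)\big),\Omega\big\rangle.
\]
At the reference point $y=x$, $\Omega=(1,\mathbf{0})$, using that $g_j\in G^X_m$ fixes $x$ and acts on $T_xX$ as a contact-preserving unitary, and recalling that in Heisenberg coordinates $(1,\mathbf{0})$ represents the covector $\alpha_x$ while by (\ref{eqn:contact_vector_field}) the $\theta$-component of $\xi_X(x)$ equals $-\Phi_\xi(m)$, this expression reduces to $r\,\Phi_\xi(m)$. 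On the support of the integrand, deviations from the reference configuration are controlled: $r\in(1/D,D)$ by Lemma \ref{lem:cut_off_t_r}, $\Omega$ lies near $(1,\mathbf{0})$ by Lemma \ref{lem:cut_off_omega}, $(\theta,\mathbf{v})$ is small by Lemma \ref{lem:shrinking_v_theta}, and $|\tau|<\epsilon$. Choosing the various cut-off neighborhoods and $\epsilon$ uniformly small, one obtains
\[
\|\nabla_\xi\Psi_3\|\;\geq\;\tfrac12\,r\,\|\Phi(m)\|\;\geq\;c(\delta')>0
\]
whenever $\mathrm{dist}_X(x,X')\geq\delta'$; indeed, the latter condition forces $\mathrm{dist}_M(m,M')\geq\delta'$, whence $\|\Phi(m)\|\geq c(\delta')>0$ by continuity on the compact manifold $M$.

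With this lower bound in hand, the conclusion follows by a standard oscillatory-integral argument: one introduces the first-order operator
\[
\mathcal{L}\;=\;\frac{1}{i\lambda\,\|\nabla_\xi\Psi_3\|^2}\,\sum_{j}(\partial_{\xi_j}\Psi_3)\,\partial_{\xi_j},
\]
satisfying $\mathcal{L}(e^{i\lambda\Psi_3})=e^{i\lambda\Psi_3}$, and iterates integration by parts in $\xi$. Since the $\xi$-derivatives do not touch the cut-off $\beta_\lambda$ of (\ref{eqn:defn_gamma}), and the derivatives of $\mathcal{A}_3$ in $\xi$ (through $a(\tau,x,g\cdot y,\lambda r\Omega)$, $\overline{\chi_\varpi(g)}$, and $\rho_x(g)$) together with the coefficients of $\mathcal{L}$ remain uniformly bounded in $\lambda$, each iteration gains a factor $\lambda^{-1}$. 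An $N$-fold integration by parts therefore dominates the polynomial prefactor $\lambda^{2d+2}$ and the symbol growth $O(\lambda^d)$ of $s(y,x,\lambda t)$ for any $N$, yielding the claimed bound $O(\lambda^{-\infty})$.

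The main obstacle is the uniformity of the lower bound on $\|\nabla_\xi\Psi_3\|$: the cut-off neighborhoods (the size of $S_1$ in Lemma \ref{lem:cut_off_omega}, the support of $\rho$, and $\epsilon$) must be chosen so that the error terms in the approximation $\partial_\xi\Psi_3=r\,\Phi_\xi(m)+\textrm{error}$ are strictly dominated by the fixed positive lower bound $\|\Phi(m)\|\geq c(\delta')$. Since $\delta'$ is fixed and the cut-offs are chosen once and for all depending only on $\epsilon$, this is achieved by a standard compactness argument applied to the smooth function $\Phi$ on the closed set $\{m\in M:\mathrm{dist}_M(m,M')\geq\delta'\}$, and the choice of $\delta'$ is then dictated by how small $\epsilon$ was taken in the definition of the good cut-off $\chi$.
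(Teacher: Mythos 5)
Your proof is correct and follows essentially the same strategy as the paper's: after localizing $g$ near the finite stabilizer $G^X_m$ via $\rho_x$, use that $\partial_\xi\Psi_3\approx r\,\Phi_\xi(m)$ (the paper obtains this by Taylor-expanding the action in HLC, while you obtain it slightly more invariantly via the contactomorphism property of $\mu^X_{g_j}$ and $\alpha(\xi_X)=-\Phi_\xi$; the two computations agree), observe that the error terms are controlled by the sizes of the cut-off neighborhoods which are in turn controlled by $\epsilon$, and iterate integration by parts in $\xi$ with the operator $\mathcal{L}$ to gain arbitrarily many inverse powers of $\lambda$. The order of quantifiers you state at the end — cut-offs fixed by $\epsilon$, then $\delta'$ dictated by how small those are — matches the paper's conclusion precisely.
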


\begin{proof}
We have restricted integration in $\mathrm{d}\mathbf{\Omega}$ to the locus where $\|\mathbf{\Omega}\|<\delta_1$, say,
so that $1\ge \Omega'\ge 1-c\delta_1^2$ for some $c>0$.

Furthermore, since $\mathbf{0}\in \mathfrak{g}^\vee$ is a regular value of $\Phi$, we have for some
$C_1>0$ that (with $m=\pi(x)$)
$$
\big\|\Phi(m)\big\|\ge C_1\,\mathrm{dist}_M\left(\pi(x),M'\right) 
=C_1\,\mathrm{dist}_X\left(x,X'\right).
$$

On the other hand, integration in $\mathrm{d}V_G(g)$ is localized in a small neighborhood of $G^X_m$.
Let $\mathbf{e}_G:\mathfrak{g}\rightarrow G$
denote the exponential map
Thus any $g$ in a neighborhood of $g_\ell\in G^X_m$ may be uniquely written 
$g=\mathbf{e}_G(\xi)\,g_\ell$, for some $\xi\in B_{\mathfrak{g}}(\mathbf{0},\delta)$;
the latter denotes
an open ball of some small radius $\delta>0$ centered at the origin in the Lie algebra of $G$.
Again, $\delta$ may be chosen arbitrarily small at the price of making $\epsilon$ itself
small enough. Thus in 
the range $\|(\theta,\mathbf{v})\|\le C\,\lambda^{-11/24}$ and $\|\mathbf{\Omega}\|\le \delta_1$, we have 
$$
-\langle g\cdot y,\Omega\rangle=r\,\Big[\langle \Phi(m),\xi\rangle\,\Omega'-\langle\xi^\sharp(m),\mathbf{\Omega}
\rangle+O\left(\|\xi\|^2\right)+O\left(\lambda^{-11/24}\right)\Big].
$$
It follows in view of (\ref{eqn:phase_3}) that for $\lambda\gg 0$ and some fixed $b>0$
\begin{eqnarray}
 \label{eqn:gradient_xi}
\big\|\nabla_\xi \Psi_3\big\|&=&r\,\Big\|\Phi(m)\,\Omega'-F_m(\mathbf{\Omega})+O(\|\xi\|)\Big\|\nonumber\\
&\ge&
\frac 1D\,\left[C_1\,\left(1-\delta^2\right)\,\mathrm{dist}_X\left(x,X'\right)-b\,(\delta_1+\delta)\right]
\end{eqnarray}
(here $F_m$ is an appropriate linear map).

We see from (\ref{eqn:gradient_xi}) that $\big\|\nabla_\xi \Psi_3\big\|$ is bounded below by a fixed positive constant
for 
\begin{equation}
 \label{eqn:localized_X'}
\mathrm{dist}_X\left(x,X'\right)\ge \frac{2}{C_1}\,b\,(\delta_1+\delta).
\end{equation}

Here $C_1$ and $b$ are fixed, while $\delta_1,\,\delta$ may be taken arbitrarily small with $\epsilon$ small enough.
Integrating by parts in $\mathrm{d}V_G(g)$, we conclude that the locus (\ref{eqn:localized_X'}) contributes negligibly to the
asymptotics of (\ref{eqn:S_FIO_3}), as claimed.

\end{proof}

Therefore, we shall assume in the following that $x$ belongs to some small tubular neighborhood of $X'$ in $X$.

Let us also fix an orthonormal basis $(\xi_j)$ of $\mathfrak{g}$. 
On the other hand, the choice of $(\xi_j)$ determines a unitary isomorphism $\mathfrak{g}\cong
\mathbb{R}^e$ (the latter with the standard Euclidean structure). We shall thus identify 
$\xi =\sum_j\nu_j\,\xi_j\in \mathfrak{g}$ with its coordinate vector $\nu\in \mathbb{R}^e$,
and  with this understanding write $g=\mathbf{e}_G(\nu)\,g_a$. 
Similarly, we shall write $\nu_M\in \mathfrak{X}(M)$ and $\nu_X\in \mathfrak{X}(X)$ 
for the vector fields generated on $M$ and $X$ by $\nu\in \mathbb{R}^e\cong \mathfrak{g}$.

Let $\Phi_j=\langle \Phi,\xi_j\rangle$ be the components of the moment map with respect to
the $\xi_j$'s; then under the previous identification 
\begin{equation}
 \label{eqn:identification_moment}
 \langle \Phi,\xi\rangle=\langle \Phi,\nu\rangle=\sum_j\Phi_j\,\nu_j.
\end{equation}

Since the metric on $G$ is bi-invariant,
we can make the replacement
\begin{equation}
 \label{eqn:replacement_G}
 \int_G\,\rho_x(g)\,\mathrm{dV}_G(g)\longrightarrow\,\sum_\ell \int_{B_e(\mathbf{0},\delta)}\,\mathcal{V}_G(\nu)\,
\rho\big(\mathbf{e}_G(\nu)\big)\,\mathrm{d}\nu,
\end{equation}
where now $B_e(\mathbf{0},\delta)\subseteq \mathbb{R}^e$ is the open ball centered at the origin and radius $\delta$,
and $\rho$ is a fixed cut off supported in it; $\mathcal{V}_G(\nu)$
is the local coordinate expression of the Haar density. Dependence on $\ell$ is of course in the rest of the amplitude
(see below).

Before proceeding, let us show that a cut-off similar to the one in Lemma
\ref{lem:shrinking_v_theta}
may be applied to integration in $\mathrm{d}\nu$, in each summand of (\ref{eqn:replacement_G}) (we take $a=1/9$ for concreteness):

\begin{lem}
\label{lem:shrinking_xi}
If $C$ is as in Lemma \ref{lem:shrinking_v_theta} and $C_1\gg C$ then 
the locus where $\|\nu\|\ge C_1\,\lambda^{-11/24}$ contributes
 negligibly to the asymptotics of (\ref{eqn:S_FIO_3}).
\end{lem}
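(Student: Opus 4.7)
My plan is to argue by non-stationary phase in the variable $\mathbf{\Omega}$, rather than directly in $\nu$. The reason is that $\partial_\nu \Psi_3$ is of order $r\|\Phi(m)\| \cdot \Omega'$ plus higher-order corrections, which for $x$ near $X'$ is too small to drive iterated integration by parts. On the other hand, the gradient $\nabla_{\mathbf{\Omega}}\Psi_3$ will turn out to be controlled from below by $\|\nu\|$ itself, provided $\|\nu\|$ dominates $\|\mathbf{v}\|\lesssim \lambda^{-11/24}$.

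The first step is to expand the horizontal Heisenberg component of $g\cdot y$, where $g=\mathbf{e}_G(\nu)\,g_\ell$ with $g_\ell\in G^X_m$ and $y=x+(\theta,\mathbf{v})$. Since $g_\ell$ fixes $x$ and acts on $T_xX$ via the unitary $A_{g_\ell}$, and the infinitesimal generator of $\mathbf{e}_G(\nu)$ at $x$ is given in Heisenberg coordinates by $(-\langle\Phi(m),\nu\rangle,\nu_M(m))$, a Taylor expansion gives the horizontal component of $g\cdot y-x$ as
$$A_{g_\ell}\mathbf{v}\,+\,\nu_M(m)\,+\,O\bigl(\|\nu\|^2\bigr)\,+\,O\bigl(\|\nu\|\,\|\mathbf{v}\|\bigr).$$
Since $x$ sits at the origin of the chart so that $\langle x,\Omega\rangle=0$, and since the contribution $r\,\tau\,\sigma_Q(x,\Omega)+r\,O(\tau^2)$ to $\Psi_3$ gives only $O(\tau)$ in $\nabla_{\mathbf{\Omega}}\Psi_3$, we obtain
$$\nabla_{\mathbf{\Omega}}\Psi_3=-r\bigl(A_{g_\ell}\mathbf{v}+\nu_M(m)\bigr)+O\bigl(r\|\nu\|^2\bigr)+O\bigl(r\|\nu\|\,\|\mathbf{v}\|\bigr)+O(\tau).$$
Because $G$ acts locally freely in a neighborhood of $M'$, the valuation map $\nu\mapsto \nu_M(m)$ is bounded below uniformly, $\|\nu_M(m)\|\ge \kappa\|\nu\|$ for some $\kappa>0$. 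Combined with $\|\mathbf{v}\|\le C\lambda^{-11/24}$, boundedness of $r\in(1/D,D)$, $|\tau|<\epsilon$ with $\epsilon$ small, and unitarity of $A_{g_\ell}$, one gets for $C_1\gg C/\kappa$ and $\|\nu\|\ge C_1\lambda^{-11/24}$ the lower bound
$$\bigl\|\nabla_{\mathbf{\Omega}}\Psi_3\bigr\|\,\ge\,\tfrac{\kappa}{2}\,r\,\|\nu\|.$$

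Given this estimate, I would then apply the standard first-order differential operator
$$L=\frac{1}{i\lambda\,\|\nabla_{\mathbf{\Omega}}\Psi_3\|^2}\,\nabla_{\mathbf{\Omega}}\Psi_3\cdot\nabla_{\mathbf{\Omega}},$$
which satisfies $L\bigl(e^{i\lambda\Psi_3}\bigr)=e^{i\lambda\Psi_3}$, and integrate by parts $N$ times in $\mathbf{\Omega}$. Each application of $L^T$ to the amplitude produces at most two extra powers of $\|\nabla_{\mathbf{\Omega}}\Psi_3\|^{-1}$ (from differentiating the denominator) together with a factor $\lambda^{-1}$; the derivatives of $\mathcal{A}_3$ in $\mathbf{\Omega}$ contribute only bounded factors because the $\mathbf{\Omega}$-cut-offs do not depend on $\lambda$. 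In the region $\|\nu\|\ge C_1\lambda^{-11/24}$, after $N$ iterations the resulting bound is at worst
$$\lambda^{-N}\cdot \bigl(C\lambda^{-11/24}\bigr)^{-2N}\,=\,O\bigl(\lambda^{-N/12}\bigr),$$
which gives the required $O(\lambda^{-\infty})$ estimate.

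The main obstacle I anticipate is the careful bookkeeping of the nonlinear interaction between $\nu$, the components of $g_\ell$, and the small displacement $(\theta,\mathbf{v})$ in the expansion of $g\cdot y$: one must verify that all the remainder terms in $\nabla_{\mathbf{\Omega}}\Psi_3$ are indeed smaller than $\kappa\|\nu\|/2$ uniformly over the $r$, $\Omega$, $\tau$ domain of integration and over the finite stabilizer elements $g_\ell\in G^X_m$. Once that uniform estimate is secured, the integration-by-parts argument proceeds exactly in parallel with the proof of Lemma \ref{lem:shrinking_v_theta}, the only difference being the choice of differentiation variable.
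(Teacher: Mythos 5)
Your strategy is to differentiate in the angular variables $\mathbf{\Omega}$ only, rather than in the full covector variable $\eta$ as the paper does, and the gap is precisely here: your lower bound $\bigl\|\nabla_{\mathbf{\Omega}}\Psi_3\bigr\|\ge \tfrac{\kappa}{2}\,r\,\|\nu\|$ does not hold uniformly. The term you wrote as $O(\tau)$ is $r\,\tau\,\partial_{\mathbf{\Omega}}\sigma_Q(x,\Omega)$, and $\partial_{\mathbf{\Omega}}\sigma_Q(x,\Omega)\big|_{\mathbf{\Omega}=\mathbf{0}}$ is generically a nonzero constant vector (it shows up later in the paper as $\mathbf{D}=\partial_{\mathbf{\Omega}}\ln\sigma_Q|_{(x,\alpha_x)}$). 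Thus $\nabla_{\mathbf{\Omega}}\Psi_3 = -r\bigl(A_{g_\ell}\mathbf{v}+\nu_M(m)\bigr) + r\,\tau\,\mathbf{D} + \ldots$ contains a contribution of size $\sim |\tau|$, and $\tau$ ranges over $(-\epsilon,\epsilon)$ with $\epsilon$ a \emph{fixed} constant. No $\lambda$-dependent $\tau$-cut-off has been introduced at this stage of the proof of Theorem \ref{thm:rapid_decay}. Meanwhile $\|\nu\|$ is allowed to be as small as $C_1\lambda^{-11/24}\to 0$, so for $\lambda$ large and $|\tau|$ of order $\epsilon$, the $r\,\tau\,\mathbf{D}$ term swamps $r\,\nu_M(m)$ and can cancel it (or simply be large and carry no information about $\nu$), breaking the claimed bound. ``$\epsilon$ small'' helps control $O(\tau^2)$ relative to $O(\tau)$, but cannot absorb a fixed-order constant into a term proportional to $\|\nu\|\to0$.

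The paper's argument sidesteps this by working with the full Euclidean gradient $\nabla_\eta\Psi_2=-\xi_X(x)+O(\lambda^{-11/24})+\tau\,\nabla_\eta F$, and then observing that $\mathfrak{g}_X(x)$ and $\mathrm{span}(\nabla_\eta F)$ are in direct-sum position near $X'$ --- because $\langle\alpha_x,\xi_X(x)\rangle=0$ while $\langle\alpha_x,\nabla_\eta F\rangle\ge C_2>0$. This transversality is uniform, so regardless of how large $\tau$ is, the projection of $\nabla_\eta\Psi_2$ transversal to $\nabla_\eta F$ retains a contribution of size $\gtrsim\|\nu\|$, giving $\|\nabla_\eta\Psi_2\|\ge D_1\|\nu\|+O(\lambda^{-11/24})$ \emph{uniformly in $\tau$}. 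If you want to salvage an $\mathbf{\Omega}$-only argument, you would need either a separate treatment of the regime $|\tau|\gtrsim\lambda^{-11/24}$ (e.g.\ integrating by parts in $r$ there, where $\partial_r\Psi_3\approx\tau\,\sigma_Q(x,\Omega)$ is large), or a projection onto the orthogonal complement of $\mathbf{D}$ combined with a transversality argument that $\nu_M(m)$ is not asymptotically aligned with $\mathbf{D}$; as written your proof does neither.
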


\begin{proof}
 Let us go back to (\ref{eqn:phase_2}), which may be rewritten as
\begin{eqnarray}
 \label{eqn:phase_2_decomposed}
 \Psi_2&=:&\big[t\psi(y,x)-\tau\big]+\langle x-g\cdot y,\eta\rangle+\tau\,F(x,\eta,\tau),   
\end{eqnarray}
where
$F(x,\eta,\tau)=:\sigma_Q(x,\eta)
 +\|\eta\|\,O\left(\tau\right)$. 

In a natural manner, $T_x^*X\hookrightarrow T_{(x,\beta)}\left(T^*X\right)$ for any $(x,\beta)\in T^*X$.
In particular, we can view $\alpha_x$ as a tangent vector to $\Sigma\subseteq T^*X$ at $(x,r\alpha_x)$ for some $r>0$,
and interpret $\nabla_\eta\Psi_2$ as a tangent vector to $x$ by duality.

If $y=x+(\theta,\mathbf{v})$ with $\|(\theta,\mathbf{v})\|\le C\,\lambda^{-11/24}$
then the Euclidean gradient of $\Psi_2$ with respect to $\eta$ is
\begin{equation}
\label{eqn:gradient_eta}
\nabla_\eta\Psi_2=-\xi_X(x)+O\left(\lambda^{-11/24}\right)+\tau\,\nabla_\eta\,F.
\end{equation}

Now if $x\in X'$ then 
$\langle \alpha_x,\xi_X(x)\rangle=0$; on the other hand
$$F(x,r\,\alpha_x,\tau)=r\,\varsigma_T(x,\eta)
 +r\,O\left(\tau\right),$$ so that $\langle \alpha_x,\nabla_\eta\,F\rangle=d_\eta F(\alpha_x)=\varsigma_T(x,\eta)
 +O\left(\tau\right)\ge C_2>0$ for some $C_2>0$, if $\epsilon$ has been chosen small enough.

It follows that for any $x\in X'$ we have
$$\mathfrak{g}_X(x)\oplus \mathrm{span}\big(\nabla_\eta\,F\big)=\{\mathbf{0}\}$$ over $X'$, and by continuity the same holds 
in an open (fixed) tubular neighborhood of $X_1\subseteq X$ of $X'$; here $\mathfrak{g}_X(x)\subseteq T_xX$ is the image 
of the evaluation map $\mathrm{ev}_x:\xi\in \mathfrak{g}\mapsto \xi_X(x)\in T_xX$. Since $\mathrm{ev}_x$ is injective
near $X'$, we conclude from (\ref{eqn:gradient_eta}) that over $X_1$ for some $D_1>0$ we have:
\begin{equation}
 \big\|\nabla_\eta\Psi_2\big\| \ge D_1\,\|\xi\|+O\left(\lambda^{-11/24}\right)
= D_1\,\|\nu\|+O\left(\lambda^{-11/24}\right).
\end{equation}
Thus, if $C_1\gg 0$ and $\|\nu\|\ge C_1\,\lambda^{-11/24}$ then
$\big\|\nabla_\eta\Psi_2\big\| \ge D_2\,\lambda^{-11/24}$ for some $D_2>0$.
The claim then follows as in Lemma \ref{lem:shrinking_v_theta}
integrating by parts in $\mathrm{d}\eta$, which is legitimate since integration in $\mathrm{d}\eta$
is now compactly supported.

\end{proof}

Arguing as for Lemma 3.2 of \cite{pao_JMP} (with $\mu^X_{g_\ell}$ in place of
$\phi^X_{-\tau_0}$)
we have in Heisenberg local coordinates
\begin{eqnarray}
 \label{eqn:action_local_coordinates_1}
g_\ell\cdot y&=&
g_\ell\cdot \big(x+(\theta,\mathbf{v})\big)\\
&=&g_\ell\cdot\big(x+(\theta,\mathbf{v})\big)=
x+\left(\theta+R_3(\mathbf{v}),A_\ell\mathbf{v}+R_2(\mathbf{v})\right).\nonumber
\end{eqnarray}

Here and in the following $R_j$ will denote a generic function, allowed to vary from line to line, 
defined on some open neighborhood
of the origin in a Euclidean space, and vanishing to $j$-th order at $\mathbf{0}$.

Given this, in view of
Corollary 2.2 of \cite{pao_IJM} applied with $\vartheta=-\nu$ we obtain
\begin{eqnarray}
 \label{eqn:action_local_coordinates_2}
\mathbf{e}_G(\nu)\,g_\ell\cdot y&=&\mathbf{e}_G(\nu)\cdot 
\Big(x+\big(\theta+R_3(\mathbf{v}),A_\ell\mathbf{v}+R_2(\mathbf{v})\big)\Big)\\
&=&x+\Big(\Theta_\ell(\theta,\mathbf{v},\nu),
\mathbf{V}_\ell(\mathbf{v},\nu)\Big),\nonumber
\end{eqnarray}
where
$$
\Theta_\ell(\theta,\mathbf{v},\nu)=:\theta-\langle\Phi(m),\nu\rangle-\omega_m\big(\nu_M(m),A_\ell\mathbf{v})+R_3(\nu,\mathbf{v}),
$$
$$
\mathbf{V}_\ell (\mathbf{v},\nu)=:A_\ell \mathbf{v}+\nu_M(m)+R_2(\nu,\mathbf{v}).
$$
Thus, 
\begin{equation}
 \label{eqn:decomposition_stabilizer}
 S_{\chi\cdot e^{-i\lambda(\cdot)}}^{(\varpi)}\left(x,x\right)\sim \sum_{\ell=1}^{r_m}
 S_{\chi\cdot e^{-i\lambda(\cdot)}}^{(\varpi)}\left(x,x\right)^{(\ell)},
\end{equation}
where the sum is over $G^X_m$, and the $\ell$-th summand is given by
\begin{eqnarray}
\label{eqn:S_FIO_4}
\lefteqn{S_{\chi\cdot e^{-i\lambda(\cdot)}}^{(\varpi)}\left(x,x\right)^{(\ell)}}\\
&\sim& 2\pi\,d_\varpi\,\left(\frac{\lambda}{2\pi}\right)^{2d+2}\cdot \int_{1/D}^{D}\,
\int_{1/D}^{D}\,\int_{B_{2d}(\mathbf{0},1)}\,\int_{\mathbb{R}^e}\,\int_{\mathbb{R}^{2d}}\,\int_{-\pi}^\pi
\int_{-\epsilon}^{\epsilon}\,e^{i\,\lambda\Psi_4^{(\ell)}}\,\mathcal{A}_4^{(\ell)}\nonumber\\
&&\cdot \mathcal{V}_{S}(\mathbf{\Omega})\,\mathcal{V}_G(\nu)\,
\mathcal{V}(\theta,\mathbf{v})\,r^{2d}\mathrm{d}\tau\,\mathrm{d}\theta\,\mathrm{d}\mathbf{v}\,
\mathrm{d}\nu\,\mathrm{d}\mathbf{\Omega}\,\mathrm{d}r\,\mathrm{d}t,\nonumber
\end{eqnarray}
where now, in view of (\ref{eqn:expansion_sz}),
\begin{eqnarray}
 \label{eqn:phase_4}
 \Psi_4^{(\ell)}&=:& -r\big\langle (\Theta_\ell,\,\mathbf{V}_\ell),\left(\Omega',\,\mathbf{\Omega}\right)\big\rangle+r\,\tau\,\sigma_Q(x,\Omega)
-\tau  \\
&&+t\psi\Big(x+(\theta,\mathbf{v}),x\Big)+r\,O\left(\tau^2\right)\nonumber\\
&=&-r\,\Theta_\ell\,\Omega'-r\,\langle \mathbf{V}_\ell,\mathbf{\Omega}\rangle+r\,\tau\,\sigma_Q(x,\Omega)
-\tau  +r\,O\left(\tau^2\right)\nonumber\\
&&+it\,\left[1-e^{i\theta}\right]+\frac{i}{2}\,t\,\|\mathbf{v}\|^2
\,e^{i\theta}+t\,R_3\left(\mathbf{v}\right)\,e^{i\theta}\nonumber\\
&=&-r\,\Big[\theta-\langle\Phi(m),\nu\rangle-\omega_m\big(\nu_M(m),A_\ell\mathbf{v})+R_3(\nu,\mathbf{v})\Big]\,\Omega'\nonumber\\
&&-r\,\Big\langle A_\ell\mathbf{v}+\nu_M(m)+R_2(\nu,\mathbf{v}),\mathbf{\Omega}\Big\rangle+r\,\tau\,\sigma_Q(x,\Omega)
-\tau  \nonumber\\
&&+it\,\left[1-e^{i\theta}\right]+\frac{i}{2}\,t\,\|\mathbf{v}\|^2
\,e^{i\theta}+t\,R_3\left(\mathbf{v}\right)\,e^{i\theta}+r\,O\left(\tau^2\right)\nonumber\\
&=&it\,\left[1-e^{i\theta}\right]-r\,\theta\,\Omega'+r\,\tau\,\sigma_Q(x,\Omega)\nonumber
-\tau+r\,O\left(\tau^2\right)\nonumber\\
&&+r\,\Big[\langle\Phi(m),\nu\rangle\,\Omega'-\langle A_\ell\mathbf{v}+\nu_M(m),\mathbf{\Omega}\rangle\Big]
\nonumber\\
&&+r\,\omega_m\big(\nu_M(m),A_\ell\mathbf{v})\,\Omega'+\frac{i}{2}\,t\,\|\mathbf{v}\|^2\,e^{i\theta}
-r\,\big\langle R_2(\nu,\mathbf{v}),\mathbf{\Omega}\big\rangle\nonumber\\
&&-r\,R_3(\nu,\mathbf{v})\,\Omega'+t\,R_3\left(\mathbf{v}\right)\,e^{i\theta}.\nonumber
\end{eqnarray}
\begin{eqnarray}
 \label{eqn:amplitude_4}
 \mathcal{A}_4^{(\ell)}&=:&\chi(\tau)\,\overline{\chi_\varpi \big(\mathbf{e}_G(\xi)\,g_\ell\big)}\,
 \varrho_x\big(x+(\theta,\mathbf{v})\big)\,\rho\big(\mathbf{e}_G(\nu)\big)\,
 \beta_\lambda (\Omega,t,r,\theta,\mathbf{v},\nu)
 \nonumber\\
&&\cdot  a\Big(\tau,x,g\cdot \big(x+(\theta,\mathbf{v})\big),\lambda\,r\,\Omega\Big)\,
s\big(x+(\theta,\mathbf{v}),x,\lambda\,t\big),
\end{eqnarray}
where $\Omega=\big(\Omega',\mathbf{\Omega}\big)=\big(\sqrt{1-\|\mathbf{\Omega}\|^2},\,\mathbf{\Omega}\big)$,
$g=\mathbf{e}_G(\xi)\,g_\ell$. We have set
\begin{equation}
 \label{eqn:defn_beta_lambda}
\beta_\lambda (\Omega,t,r,\theta,\mathbf{v},\nu)=:
\beta_\lambda (\Omega,t,r,\theta,\mathbf{v})\cdot \gamma_5\left(\lambda^{11/24}\,\nu\right),
\end{equation}
for an appropriate cut-off $\gamma_5$ supported near the origin in $\mathfrak{g}^\vee$.

Here the expression $\Phi_\nu(m)=\langle\Phi(m),\nu\rangle$ refers to the pairing 
$\mathfrak{g}^\vee\times \mathfrak{g}\rightarrow \mathbb{R}$, while
$\langle A_\ell\mathbf{v}+\nu_M(m),\mathbf{\Omega}\rangle$ refers to the pairing
$T_mM\times T_mM^\vee\rightarrow \mathbb{R}$.

Now let us introduce the rescaled variables
\begin{equation}
 \label{eqn:1st_rescaling}
 \mathbf{v}\mapsto \frac{1}{r\sqrt{\lambda}}\,\mathbf{v},\,\,\,\nu\mapsto\frac{1}{r\sqrt{\lambda}}\,\nu.
\end{equation}
Thus integration in $\mathrm{d}\mathbf{v}\,\mathrm{d}\nu$ will now be on an expanding ball in $\mathbb{R}^{2d}\times \mathbb{R}^e$ 
of radius $O\left(\lambda^{1/24}\right)$.
Then (\ref{eqn:S_FIO_4}) may be further rewritten
\begin{eqnarray}
 \label{eqn:S_FIO_5}
\lefteqn{S_{\chi\cdot e^{-i\lambda(\cdot)}}^{(\varpi)}\left(x,x\right)^{(\ell)}\,\sim\,
\frac{d_\varpi}{(2\pi)^{2d+1}}\,\lambda^{d+2-e/2}}\\
&& \cdot \int_{1/D}^{D}\,
\int_{1/D}^{D}\,\int_{B_{2d}(\mathbf{0},1)}\,\int_{\mathbb{R}^e}\,\int_{\mathbb{R}^{2d}}\,\int_{-\pi}^\pi
\int_{-\epsilon}^{\epsilon}\,e^{i\lambda \,\Gamma+i\,\sqrt{\lambda}\,\Upsilon}
\,e^{B^{(\ell)}}\,e^{\lambda\,R_3\left(\frac{\nu}{\sqrt{\lambda}},\frac{\mathbf{v}}{\sqrt{\lambda}}\right)}\nonumber\\
&&\cdot\mathcal{A}^{(\ell)}\cdot \mathcal{V}_{S}(\mathbf{\Omega})\,\mathcal{V}_G\left(\frac{\nu}{r\sqrt{\lambda}}\right)\,
\mathcal{V}\left(\theta,\frac{\mathbf{v}}{r\sqrt{\lambda}}\right)
\,r^{-e}\mathrm{d}\tau\,\mathrm{d}\theta\,\mathrm{d}\mathbf{v}\,
\mathrm{d}\nu\,\mathrm{d}\mathbf{\Omega}\,\mathrm{d}r\,\mathrm{d}t,\nonumber
\end{eqnarray}
where, in view of (\ref{eqn:phase_4}) and (\ref{eqn:amplitude_4})
we have
\begin{eqnarray}
\label{eqn:phase_5}
\Gamma&=:&it\,\left[1-e^{i\theta}\right]-r\,\theta\,\Omega'+r\,\tau\,\sigma_Q(x,\Omega)
-\tau+r\,O\left(\tau^2\right)\\
\Upsilon^{(\ell)}&=:&\langle\Phi(m),\nu\rangle\,\Omega'-\langle A_\ell\mathbf{v}+\nu_M(m),\mathbf{\Omega}\rangle
\nonumber\\
B^{(\ell)}&=:&\frac ir\,\omega_m\big(\nu_M(m),A_\ell\mathbf{v})\,\Omega'-\frac{1}{2\,r^2}\,t\,\|\mathbf{v}\|^2\,e^{i\theta}
-\frac ir\,\big\langle R_2(\nu,\mathbf{v}),\mathbf{\Omega}\big\rangle,\nonumber
\end{eqnarray}
while $\mathcal{A}^{(\ell)}$ is $\mathcal{A}_4^{(\ell)}$ expressed in terms of the new rescaled variables.

We may then rewrite (\ref{eqn:S_FIO_5}) the following form:
\begin{eqnarray}
 \label{eqn:S_FIO_6}
\lefteqn{S_{\chi\cdot e^{-i\lambda(\cdot)}}^{(\varpi)}\left(x,x\right)^{(\ell)}
\,\sim\, (2\pi)^{-(2d+1)}\, d_\varpi\,\lambda^{d+2-e/2}}\\
&&\cdot\int_{\mathbb{R}^e}\,\left[\int_{B_{2d}(\mathbf{0},1)}\,\int_{\mathbb{R}^{2d}}
\,e^{i\,\sqrt{\lambda}\,\Upsilon^{(\ell)}}
\,I_\lambda^{(\ell)}(\mathbf{v},\mathbf{\Omega},\nu)\,\mathrm{d}\mathbf{v}\,\mathrm{d}\mathbf{\Omega}\right]
\,\mathrm{d}\nu,\nonumber
\end{eqnarray}
where
\begin{equation}
 \label{eqn:I_lambda_v_Omega}
 I_\lambda(\mathbf{v},\mathbf{\Omega},\nu)^{(\ell)}=:\int_{1/D}^{D}\,
\int_{1/D}^{D}\,\int_{-\epsilon}^{\epsilon}\,\int_{-\pi}^\pi\,e^{i\lambda \,\Gamma}\,\mathcal{B}^{(\ell)}\,
\mathrm{d}\theta\,\mathrm{d}\tau\,\mathrm{d}r\,\mathrm{d}t,
\end{equation}
with
\begin{equation}
 \label{eqn:amplitude_B}
 \mathcal{B}^{(\ell)}=:e^{B^{(\ell)}}\cdot e^{\lambda\,R_3\left(\frac{\nu}{\sqrt{\lambda}},\frac{\mathbf{v}}{\sqrt{\lambda}}\right)}
\,\mathcal{A}^{(\ell)}
\cdot \mathcal{V}_{S}(\mathbf{\Omega})\,\mathcal{V}_G\left(\frac{\nu}{r\sqrt{\lambda}}\right)\,
\mathcal{V}\left(\theta,\frac{\mathbf{v}}{r\sqrt{\lambda}}\right)
\,r^{-e}.
\end{equation}

For $\|\nu\|,\,\|\mathbf{v}\|\le C\,\lambda^{1/24}$, we have
$$
\lambda\,R_3\left(\frac{\nu}{\sqrt{\lambda}},\frac{\mathbf{v}}{\sqrt{\lambda}}\right)
=O\left(\lambda^{-3/8}\right).
$$
Therefore, in the same range
$$
|\mathcal{B}^{(\ell)}|\le C'\,\lambda^d\,e^{-c\,\|\mathbf{v}\|^2}
$$
for some $C',\,c>0$. Also, we have an asymptotic expansion, coming from the asymptotic expansions
of $a$ and $s$ as classical symbols and from the Taylor expansions in the rescaled variables,
$$
\mathcal{A}^{(\ell)}
\cdot \mathcal{V}_{S}(\mathbf{\Omega})\,\mathcal{V}_G\left(\frac{\nu}{r\sqrt{\lambda}}\right)\,
\mathcal{V}\left(\theta,\frac{\mathbf{v}}{r\sqrt{\lambda}}\right)
\,r^{-e}\sim \sum_{k\ge 0}\lambda^{d-k/2}\,P_k^{(\ell)}(\nu,\mathbf{v}),
$$
where $P_k^{(\ell)}(\nu,\mathbf{v})$ is a polynomial of joint degree $\le k$ (with coefficients depending
on all the other variables, which we omit).
It follows that $\mathcal{B}^{(\ell)}$ has an asymptotic expansion of the form:
\begin{equation}
 \label{eqn:amplitude_B_expanded}
 \mathcal{B}^{(\ell)}\sim e^{B^{(\ell)}}\cdot \sum_{k\ge 0}\lambda^{d-k/2}\,Q_k^{(\ell)}(\nu,\mathbf{v},\mathbf{\Omega}),
\end{equation}
where $Q_k^{(\ell)}(\nu,\mathbf{v},\mathbf{\Omega})$ is a polynomial in
$(\nu,\mathbf{v})$, of joint degree $\le 3\,k$ (again, with coefficients depending
on all the other variables).
Furthermore, the leading coefficient is
$$
Q_0^{(\ell)}=\left(\dfrac{t}{\pi}\right)^{d}\,r^{-e}\cdot \overline{\chi_\varpi(g_\ell)}.
$$

Let us evaluate $I_\lambda^{(\ell)}(\mathbf{v},\mathbf{\Omega},\nu)$ asymptotically for $\lambda\rightarrow+\infty$,
viewing it as an oscillatory integral in $(\theta,t,\tau,r)$, with oscillatory parameter $\lambda$ and phase
$\Gamma$ with non-negative imaginary part. 
A computation that we leave to the reader shows that for $|\tau|<\epsilon$ and $\epsilon$ small
enough we have:

\begin{lem}
$\Gamma$ has a unique stationary point
$$
P_0=(\theta_0,t_0,\tau_0,r_0)=:\left(0,\dfrac{\Omega'}{\sigma_Q(x,\Omega)},0,\dfrac{1}{\sigma_Q(x,\Omega)}\right).
$$
The Hessian matrix at the stationary point satisfies
$$
\det \left(\dfrac{\lambda}{2\pi\,i}\,\mathrm{H}(\Psi)(P_0)\right)=\left(\dfrac{\lambda}{2\pi}\right)^4\,\sigma_Q(x,\Omega)^2.
$$
In particular, the stationary point is non-degenerate.
\end{lem}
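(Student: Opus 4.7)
The plan is routine stationary-phase analysis in four real variables: solve $\nabla_{(\theta,t,\tau,r)}\Gamma=0$, verify uniqueness on the integration domain, and then evaluate the Hessian determinant at the critical point.

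First, differentiating $\Gamma$ yields
$$\partial_\theta\Gamma=te^{i\theta}-r\Omega',\qquad \partial_t\Gamma=i\bigl(1-e^{i\theta}\bigr),$$
$$\partial_\tau\Gamma=r\,\sigma_Q(x,\Omega)-1+r\cdot O(\tau),\qquad \partial_r\Gamma=-\theta\Omega'+\tau\,\sigma_Q(x,\Omega)+O(\tau^2).$$
The equation $\partial_t\Gamma=0$ forces $e^{i\theta}=1$, hence $\theta_0=0$ on $(-\pi,\pi)$. Substituting $\theta=0$ into $\partial_r\Gamma=0$ gives $\tau\bigl(\sigma_Q(x,\Omega)+O(\tau)\bigr)=0$, so $\tau_0=0$ provided $\epsilon$ is small enough, since $\sigma_Q(x,\Omega)>0$. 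The two remaining equations then reduce to $r\,\sigma_Q(x,\Omega)=1$ and $t=r\Omega'$, giving the unique stationary point $P_0$ claimed in the lemma.

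For the Hessian at $P_0$, the only nonzero second partials, in the ordering $(\theta,t,\tau,r)$, are
$$\partial_\theta^2\Gamma=it_0,\quad \partial_\theta\partial_t\Gamma=1,\quad \partial_\theta\partial_r\Gamma=-\Omega',\quad \partial_\tau\partial_r\Gamma=\sigma_Q(x,\Omega),$$
together with an \emph{a priori} unknown entry $\ast=\partial_\tau^2\Gamma(P_0)$ arising from the Hamilton--Jacobi remainder $r\cdot O(\tau^2)$. Expanding $\det H$ along the second row, which reads $(1,0,0,0)$, produces $-\det M_{21}$, where $M_{21}$ is the minor obtained by deleting row $2$ and column $1$; a further expansion of $M_{21}$ along its first column reduces the computation to $\det\begin{pmatrix}\ast & \sigma_Q \\ \sigma_Q & 0\end{pmatrix}=-\sigma_Q(x,\Omega)^2$. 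Hence $\det H=\sigma_Q(x,\Omega)^2$, independently of $\ast$, and multiplication by $(\lambda/2\pi i)^4$, using $i^{-4}=1$, gives the stated value. Non-vanishing of $\sigma_Q$ on $\Sigma$ certifies non-degeneracy.

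The only real subtlety, and therefore the step I would check most carefully, is the unknown Hamilton--Jacobi remainder, which could in principle perturb both the critical-point equations and the Hessian. However, it vanishes to second order in $\tau$, so its $r$-derivative does not disturb $\tau_0=0$ and its $\tau$-derivative does not disturb $r_0\sigma_Q=1$; and its sole potentially dangerous contribution, namely to the $(\tau,\tau)$-entry of $H$, is annihilated by the cofactor expansion above. Thus the answer is unambiguous, and the lemma follows.
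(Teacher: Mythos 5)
Your proof is correct and is exactly the direct computation the paper declares it leaves to the reader ("A computation that we leave to the reader shows..."). You correctly identify that the Hamilton--Jacobi remainder $r\,O(\tau^2)$ can only contribute to the $(\tau,\tau)$-entry of the Hessian, and that the cofactor expansion along the row $(1,0,0,0)$ and then along the first column of the resulting minor annihilates that entry, so the determinant $\sigma_Q(x,\Omega)^2$ is unambiguous.
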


We have $\Gamma(P_0)=0$ and
\begin{eqnarray*}
 B^{(\ell)}(P_0,\mathbf{v},\mathbf{\Omega},\nu)&=&\sigma_Q(x,\Omega)\,\Omega'\,\left[i\,\omega_m\big(\nu_M(m),A_\ell\mathbf{v})
-\frac{1}{2}\,\|\mathbf{v}\|^2\right]\\
&&
- i\,\big\langle R_2(\nu,\mathbf{v}),\mathbf{\Omega}\big\rangle
\end{eqnarray*}

Applying the stationary phase lemma, we obtain an asymptotic expansion
\begin{eqnarray}
\label{eqn:I_lambda_expanded}
 I_\lambda(\mathbf{v},\mathbf{\Omega},\nu)^{(\ell)}&\sim&\dfrac{(2\pi)^2}{\pi^d}\,e^{B^{(\ell)}(P_0,\mathbf{v},\mathbf{\Omega},\nu)}\,\lambda^{d-2}\,\sigma_Q(x,\Omega)^{e-(1+d)}
\cdot{\Omega'}^d\cdot
\,\overline{\chi_\varpi(g_\ell)}\nonumber\\
&&\cdot \left[1+\sum_{j\ge 1}\lambda^{-j/2}\widetilde{Q}_j^{(l)}(\nu,\mathbf{v})\right].
\end{eqnarray}
 
Since the remaining integration is compactly supported in $\mathbf{\Omega}$, and 
over an expanding ball of radius $O\left(\lambda^{1/24}\right)$, the expansion may be integrated term by term. 

Let us next view the integral in $\mathrm{d}\mathbf{v}\,\mathrm{d}\mathbf{\Omega}$ in (\ref{eqn:S_FIO_6})
as an oscillatory integral with parameter $\sqrt{\lambda}$ and real phase $\Upsilon^{(\ell)}$.
Again, a computation that we leave to the reader and application of Lemma \ref{lem:signature_1}
with $r=2d$ and $R=-A$ (a symplectic matrix) shows the following:

\begin{lem}
 \label{lem:critical_point_Upsilon}
$\Upsilon^{(\ell)}$ has a unique critical point, given by
$$
(\mathbf{v}_0,\mathbf{\Omega}_0)=:\left(-A_\ell^{-1}\nu_M(m),\mathbf{0}\right).
$$
The Hessian matrix at the critical point has the form
$$
\mathrm{Hess}\left(\Upsilon^{(\ell)}\right)_{(\mathbf{v}_0,\mathbf{\Omega}_0)}=\begin{pmatrix}
                                                       0_{2d}&-A_{\ell}^t\\
                                                       -A_{\ell}&-\Phi_\nu(m)\,I_{2d}
                                                      \end{pmatrix},
$$
where $\Phi_\nu=\langle \Phi,\nu\rangle$. In particular, 
its determinant and signature are 
$$
\det\left(\mathrm{Hess}\left(\Upsilon^{(\ell)}\right)_{(\mathbf{v}_0,\mathbf{\Omega}_0)}\right)=1,\,\,\,\,\,\,
\mathrm{sgn}\left(\mathrm{Hess}\left(\Upsilon^{(\ell)}\right)_{(\mathbf{v}_0,\mathbf{\Omega}_0)}\right)=0.
$$
\end{lem}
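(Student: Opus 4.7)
The plan is to compute the gradient and Hessian of $\Upsilon^{(\ell)}$ directly in the variables $(\mathbf{v},\mathbf{\Omega})\in\mathbb{R}^{2d}\times B_{2d}(\mathbf{0},1)$, keeping in mind that $\Omega'=\sqrt{1-\|\mathbf{\Omega}\|^2}$ is a function of $\mathbf{\Omega}$ via the local parametrization of the upper hemisphere. From
$$
\Upsilon^{(\ell)}=\Phi_\nu(m)\,\sqrt{1-\|\mathbf{\Omega}\|^2}\;-\;\bigl\langle A_\ell\mathbf{v}+\nu_M(m),\mathbf{\Omega}\bigr\rangle,
$$
differentiation gives $\partial_{\mathbf{v}}\Upsilon^{(\ell)}=-A_\ell^t\mathbf{\Omega}$ and $\partial_{\mathbf{\Omega}}\Upsilon^{(\ell)}=-\Phi_\nu(m)\,\mathbf{\Omega}/\Omega'-\bigl(A_\ell\mathbf{v}+\nu_M(m)\bigr)$. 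The first equation forces $\mathbf{\Omega}=\mathbf{0}$, as $A_\ell$ is invertible (in fact unitary); the second then yields $\mathbf{v}=-A_\ell^{-1}\nu_M(m)$. This produces the unique critical point $(\mathbf{v}_0,\mathbf{\Omega}_0)$ claimed in the statement.

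Next, I would compute the second partials at $(\mathbf{v}_0,\mathbf{\Omega}_0)$. Clearly $\partial^2_{\mathbf{v}\mathbf{v}}\Upsilon^{(\ell)}=0$ and the mixed block equals $-A_\ell$ (up to transposition). For the $\mathbf{\Omega}\mathbf{\Omega}$-block, one has
$$
\partial^2_{\mathbf{\Omega}_i\mathbf{\Omega}_j}\Upsilon^{(\ell)}=-\Phi_\nu(m)\!\left[\frac{\delta_{ij}}{\Omega'}+\frac{\Omega_i\Omega_j}{{\Omega'}^3}\right],
$$
and evaluating at $\mathbf{\Omega}=\mathbf{0}$ (where $\Omega'=1$) reduces this to $-\Phi_\nu(m)\,I_{2d}$. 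Assembling the blocks yields exactly the asserted Hessian matrix.

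To read off $\det$ and signature, I apply Lemma \ref{lem:signature_1} with $r=2d$, $R=-A_\ell$ and $S=-\Phi_\nu(m)\,I_{2d}$. The formula gives $\det=(-1)^{2d}\det(-A_\ell)^2=\det(A_\ell)^2$. Since $A_\ell$ is, in Heisenberg local coordinates, the real $2d\times 2d$ matrix representing the $\mathbb{C}$-linear unitary automorphism $d_m\mu^M_{g_\ell}$ of $T_mM\cong\mathbb{C}^d$, we have $\det(A_\ell)=|\det_{\mathbb{C}}(d_m\mu^M_{g_\ell})|^2=1$; hence the determinant is $1$. For the signature, since $r=2d$ is even we also have $\det(-A_\ell)=\det(A_\ell)=1>0$, so the hypothesis $\det(R)>0$ of Lemma \ref{lem:signature_1} is met and the signature vanishes.

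The only non-routine point is the positivity of $\det(-A_\ell)$, which is what lets us invoke the second half of Lemma \ref{lem:signature_1}; this is clean here because $d_m\mu^M_{g_\ell}$ is $\mathbb{C}$-linear and unitary, but it is worth isolating, since a generic real unitary matrix need not have positive determinant. Everything else is a direct gradient/Hessian computation combined with the block-matrix lemma already proved in the text.
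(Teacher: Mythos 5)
Your computation is correct and follows exactly the route the paper indicates: the paper leaves the gradient/Hessian calculation to the reader and explicitly points to Lemma \ref{lem:signature_1} applied with $r=2d$ and $R=-A_\ell$, which is what you do. Your additional remark that $\det(-A_\ell)=\det(A_\ell)=1$ follows from $\mathbb{C}$-linearity (not from orthogonality alone) correctly isolates the point that makes the hypothesis $\det(R)>0$ of Lemma \ref{lem:signature_1} available; this is consistent with the paper's parenthetical that $A_\ell$ is ``unitary (i.e., symplectic and orthogonal),'' either of the descriptors symplectic or $\mathbb{C}$-linear giving $\det(A_\ell)=1$.
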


Also, we have (recalling that $A_\ell$ is unitary (i.e., symplectic and orthogonal): 
\begin{eqnarray}
 \label{eqn:exponent_critical}
\lefteqn{i\sqrt{\lambda}\,\Upsilon^{(\ell)}(P_0,\mathbf{v}_0,\mathbf{\Omega}_0,\nu)+B^{(\ell)}(P_0,\mathbf{v}_0,\mathbf{\Omega}_0,\nu)}\\
&=&i\sqrt{\lambda}\cdot\langle\Phi(m),\nu\rangle-\frac{1}{2}\,\varsigma_T(x)\cdot\|\nu_M(m)\|^2.
\end{eqnarray}

Thus we obtain for the inner integral in (\ref{eqn:S_FIO_6}) an asymptotic expansion
\begin{eqnarray}
 \label{eqn:inner_integral_expanded}
\lefteqn{\int_{B_{2d}(\mathbf{0},1)}\,\int_{\mathbb{R}^{2d}}
\,e^{i\,\sqrt{\lambda}\,\Upsilon^{(\ell)}}
\,I_\lambda^{(\ell)}(\mathbf{v},\mathbf{\Omega},\nu)\,\mathrm{d}\mathbf{v}\,\mathrm{d}\mathbf{\Omega}}\\
&\sim& \dfrac{1}{\pi^d}\,(2\pi)^{2d+2}\,\lambda^{-2}\,\varsigma_T(x)^{e-(1+d)}\,\overline{\chi_\varpi(g_\ell)} \cdot
\nonumber\\
&&\cdot e^{i\sqrt{\lambda}\cdot\langle\Phi(m),\nu\rangle-\frac{1}{2}\,\varsigma_T(x)\cdot\|\nu_M(m)\|^2}
\sum_{j\ge 0}\lambda^{-j/2}\,R_j(\nu)
\end{eqnarray}
where $R_j$ is a polynomial in $\nu$ (of degree $\le 3j$), and $R_0=1$.

The expansion may be integrated in $\mathrm{d}\nu$. 
To perform the computation, let $C_m$ be the matrix representing the evaluation map 
$\mathrm{val}_m:\mathfrak{g}\rightarrow \mathfrak{g}_M(m)$, $\xi\mapsto \xi_M(m)$,
with respect to the orthonormal basis $(\xi_j)$ of $\mathfrak{g}$ and an orthonormal
basis of $\xi_M(m)\subseteq T_mM$. Thus
$\|\nu_M(m)\|^2=\nu^t\,C_m^tC_m\,\nu$. If we set $\mathbf{s}=C_m\,\nu$, and then 
$\mathbf{r}=:\sqrt{\varsigma_T(x)}\cdot \mathbf{s}$, we get
for every $j$
\begin{eqnarray}
 \label{eqn:j_th_integral}
\lefteqn{\int_{\mathbb{R}^e}R_j(\nu)\,e^{i\sqrt{\lambda}\cdot\langle\Phi(m),\nu\rangle-\frac{1}{2}\,\varsigma_T(x)\cdot\|\nu_M(m)\|^2}\,
\mathrm{d}\nu}\nonumber\\
&=&\frac{1}{\det(C_m)}\,\int_{\mathbb{R}^e}\widetilde{R}_j(\mathbf{s})\,
e^{i\sqrt{\lambda}\cdot\big\langle (C^{-1})^t\Phi(m),\mathbf{s}\big\rangle
-\frac{1}{2}\,\varsigma_T(x)\cdot\|\mathbf{s}\|^2}\,
\mathrm{d}\mathbf{s}\nonumber\\
&=&\frac{1}{\det(C_m)}\,\varsigma_T(x)^{-e/2}\int_{\mathbb{R}^e}\widehat{R}_j(\mathbf{r})\,
e^{i\sqrt{\lambda}\cdot\big\langle L\big(\Phi(m)\big),\mathbf{r}\big\rangle
-\frac{1}{2}\cdot\|\mathbf{r}\|^2}\,
\mathrm{d}\mathbf{r}
\end{eqnarray}
where $L\big(\Phi(m)\big)=:\varsigma_T(x)^{-1/2}\,(C^{-1})^t\Phi(m)$, while $\widetilde{R}_j$ and $\widehat{R}_j$
are obtained from $R_j$ by substitution.

For $j=0$, we get
\begin{eqnarray}
 \label{eqn:0_th_term}
\lefteqn{\frac{1}{\det(C_m)}\,\varsigma_T(x)^{-e/2}\int_{\mathbb{R}^e}\,
e^{i\sqrt{\lambda}\cdot\big\langle L\big(\Phi(m)\big),\mathbf{s}\big\rangle
-\frac{1}{2}\cdot\|\mathbf{r}\|^2}\,
\mathrm{d}\mathbf{r}}\\
&=&\frac{1}{\det(C_m)}\,(2\pi)^{e/2}\,\varsigma_T(x)^{-e/2}\,
\exp\left(-\frac 12\,\lambda\,\big\|L\big(\Phi(m)\big)\big\|^2\right).\nonumber
\end{eqnarray}

For general $j$, on the other hand we obtain
\begin{eqnarray}
 \label{eqn:j_th_term}
\lefteqn{\int_{\mathbb{R}^e}\widehat{R}_j(\mathbf{r})\,
e^{i\sqrt{\lambda}\cdot\big\langle L\big(\Phi(m)\big),\mathbf{r}\big\rangle
-\frac{1}{2}\cdot\|\mathbf{r}\|^2}\,
\mathrm{d}\mathbf{r}}\\
&=&\frac{1}{\det(C_m)}\,\varsigma_T(x)^{-e/2}\,S_j\left(\sqrt{\lambda}\,L\big(\Phi(m)\big)\right)\,
\exp\left(-\frac 12\,\lambda\,\big\|L\big(\Phi(m)\big)\big\|^2\right),\nonumber
\end{eqnarray}
where $S_j$ is a polynomial of degree $\le 3j$, and $\big\|L\big(\Phi(m)\big)\big\|^2\ge c\,\|\Phi(m)\|^2$ for some
$c>0$. Thus for every $N\gg 0$ we can write
\begin{eqnarray}
\label{eqn:ell_th_term_expanded}
S_{\chi\cdot e^{-i\lambda(\cdot)}}^{(\varpi)}\left(x,x\right)^{(\ell)}
&\sim&\sum_{j=0}^N\,\lambda^{-j/2}\,Q_j^{(\ell)}\left(\sqrt{\lambda}\,\Phi(m)\right)\,
\exp\left(-\frac 12\,\lambda\,\big\|L\big(\Phi(m)\big)\big\|^2\right)\nonumber\\
&&+
O\left(\lambda^{-(N+1)}\right),
\end{eqnarray}
where $Q_j^{(\ell)}$ is a polynomial of degree $\le 3j$.

If as assumed $\mathrm{dist}_X(x,X')=\mathrm{dist}_M(m,M')\ge C\,\lambda^{-\frac{11}{24}}$, 
then $\|\Phi(m)\|\ge C'\,\lambda^{-\frac{11}{24}}$
for some $C'>0$, because $\mathbf{0}\in \mathfrak{g}^\vee$ is a regular value of $\Phi$; so that 
\begin{equation}
 \label{eqn:exponentian_decay}
\exp\left(-\frac 12\,\lambda\,\big\|L\big(\Phi(m)\big)\big\|^2\right)\le 
\exp\left(-\frac 12\,C'\,\lambda^{1/12}\right).
\end{equation}
In the case $x=y$, Theorem \ref{thm:rapid_decay} follows from (\ref{eqn:ell_th_term_expanded}) and (\ref{eqn:exponentian_decay}).

Given this, the general case now follows from the Cauchy-Schwartz inequality:
\begin{eqnarray*}
 %\label{eqn:general_case}
\lefteqn{\left| S_{\chi\cdot e^{-i\lambda(\cdot)}}^{(\varpi)}(x,y)\right|}\nonumber\\
&=&
\left|\sum_j\widehat{\chi}\big(\lambda-\lambda_j^{(\varpi)}\big)^{1/2}\,\,e_j^{(\varpi)}(x)\cdot 
\overline{\widehat{\chi}\big(\lambda-\lambda_j^{(\varpi)}\big)^{1/2}\,e_j^{(\varpi)}(y)}\right|\nonumber\\
&\le&\sqrt{\sum_j\widehat{\chi}\big(\lambda-\lambda_j^{(\varpi)}\big)\,\,\left|e_j^{(\varpi)}(x)\right|^2}
\cdot \sqrt{\sum_j\widehat{\chi}\big(\lambda-\lambda_j^{(\varpi)}\big)\,\,\left|e_j^{(\varpi)}(y)\right|^2}\nonumber\\
&=&S_{\chi\cdot e^{-i\lambda(\cdot)}}^{(\varpi)}(x,x)^{1/2}\,S_{\chi\cdot e^{-i\lambda(\cdot)}}^{(\varpi)}(y,y)^{1/2}.
\end{eqnarray*}

\end{proof}

\section{Proof of Theorem \ref{thm:local_scaling_asymtpotics}}

\begin{proof}
It suffices to prove the Theorem in case $\theta_2=0$. Let us set for $\lambda>0$
$$
x_{1\lambda}=:x+\left(\frac{\theta_1}{\sqrt{\lambda}},\frac{\mathbf{w}_1}{\sqrt{\lambda}}\right),\,\,\,\,
x_{2\lambda}=:x+\frac{\mathbf{w}_2}{\sqrt{\lambda}}.
$$
Following the same line of argument as in the proof of Theorem \ref{thm:rapid_decay},
we obtain the analogues of (\ref{eqn:decomposition_stabilizer}) and (\ref{eqn:S_FIO_4}):
\begin{equation}
 \label{eqn:decomposition_stabilizer_lambda}
 S_{\chi\cdot e^{-i\lambda(\cdot)}}^{(\varpi)}\left(x_{1\lambda},x_{2\lambda}\right)\sim \sum_{\ell=1}^{r_m}
 S_{\chi\cdot e^{-i\lambda(\cdot)}}^{(\varpi)}\left(x_{1\lambda},x_{2\lambda}\right)^{(\ell)},
\end{equation}
and
\begin{eqnarray}
\label{eqn:S_FIO_4_lambda}
\lefteqn{S_{\chi\cdot e^{-i\lambda(\cdot)}}^{(\varpi)}\left(x_{1\lambda},x_{2\lambda}\right)^{(\ell)}}\\
&\sim& 2\pi\,d_\varpi\,\left(\frac{\lambda}{2\pi}\right)^{2d+2}\cdot \int_{1/D}^{D}\,
\int_{1/D}^{D}\,\int_{B_{2d}(\mathbf{0},1)}\,\int_{\mathbb{R}^e}\,\int_{\mathbb{R}^{2d}}\,\int_{-\pi}^\pi
\int_{-\epsilon}^{\epsilon}\,e^{i\,\lambda\Psi_{4\lambda}^{(\ell)}}\,\mathcal{A}_{4\lambda}^{(\ell)}\nonumber\\
&&\cdot \mathcal{V}_{S}(\mathbf{\Omega})\,\mathcal{V}_G(\nu)\,
\mathcal{V}(\theta,\mathbf{v})\,r^{2d}\mathrm{d}\tau\,\mathrm{d}\theta\,\mathrm{d}\mathbf{v}\,
\mathrm{d}\nu\,\mathrm{d}\mathbf{\Omega}\,\mathrm{d}r\,\mathrm{d}t,\nonumber
\end{eqnarray}
where now
\begin{eqnarray}
 \label{eqn:phase_4_lambda}
 \Psi_{4\lambda}^{(\ell)}&=:& 
it\,\left[1-e^{i\theta}\right]-r\,\theta\,\Omega'+r\,\tau\,\sigma_Q(x_{1\lambda},\Omega)
-\tau+r\,O\left(\tau^2\right)\\
&&+r\,\left[\left(\frac{\theta_1}{\sqrt{\lambda}}+\langle\Phi(m),\nu\rangle\right)\,\Omega'
+\left\langle \frac{\mathbf{w}_1}{\sqrt{\lambda}}-\big( A_\ell\mathbf{v}+\nu_M(m)\big),\mathbf{\Omega}\right\rangle\right]
\nonumber\\
&&+r\,\omega_m\big(\nu_M(m),A_\ell\mathbf{v})\,\Omega'-i\,t\,\psi_2\left(\mathbf{v},\frac{\mathbf{w}_2}{\sqrt{\lambda}}\right)\,e^{i\theta}\nonumber\\
&&-r\,\left\langle R_2\left(\nu,\mathbf{v},\frac{\mathbf{w}_j}{\sqrt{\lambda}}\right),\mathbf{\Omega}\right\rangle
+R_3\left(\nu,\mathbf{v},\frac{\mathbf{w}_j}{\sqrt{\lambda}}\right),\nonumber
\end{eqnarray}
\begin{eqnarray}
 \label{eqn:amplitude_4_lambda}
 \mathcal{A}_{4\lambda}^{(\ell)}&=:&\chi(\tau)\,\overline{\chi_\varpi \big(\mathbf{e}_G(\xi)\,g_\ell\big)}\,
 \varrho_x\big(x+(\theta,\mathbf{v})\big)\,\rho\big(\mathbf{e}_G(\nu)\big)\,
 \beta_\lambda (\Omega,t,r,\theta,\mathbf{v},\nu)
 \nonumber\\
&&\cdot  a\Big(\tau,x_{1\lambda},g\cdot \big(x+(\theta,\mathbf{v})\big),\lambda\,r\,\Omega\Big)\,
s\big(x+(\theta,\mathbf{v}),x_{2\lambda},\lambda\,t\big).
\end{eqnarray}
Here $R_j(a,b,\ldots)$ denotes as before a function of $a,b,\ldots$ vanishing to $j$-th order at the
origin $a=0,\,b=0,\cdots$, and possibly depending on other variables, which are omitted.

%%%%%%%%%%%%%%%%%%%%%%%%%%%%%%%%%%%%%%%%%%%%%%%%%%%%%%%%%
%%%%%%%%%%%%%%%%%%%%%%%%%%%%%%%%%%%%%%%%%%%%%%%%%%%%%%%%%%%%
%Recalling that both $\mathbf{v}$ and $\nu$ are $O\left(\lambda^{-11/24}\right)$ on the support of $\beta_\lambda$, 
%we infer from (\ref{eqn:phase_4_lambda}) that in the same range we have
%\begin{equation}
% \label{eqn:partial_mathbf{v}}
% \partial_\mathbf{v}\Psi_{4\lambda}^{(\ell)}=-A_\ell^{-1}\mathbf{\Omega}+O\left(\lambda^{-11/24}\right).
%\end{equation}
%Iteratively integrating by parts in $\mathrm{d}\mathbf{v}$ on the locus where $\|\mathbf{\Omega}\|\ge C'\,\lambda^{-11/24}$
%for some $C'\gg 0$ introduces at each step a factor $\lambda^{-1}\,O\left(\lambda^{11/24}\,\lambda^{11/24}\right)=O\left(\lambda^{-1/12}\right)$.
%Here the first factor $\lambda^{11/24}$ comes from the estimate (\ref{eqn:partial_mathbf{v}}) and the second from
%differentiation of the factor $\gamma_5\left(\lambda^{11/24}\,\nu\right)$ in (\ref{eqn:defn_beta_lambda}).
%Therefore, we can draw the following conclusion:
%
%%\begin{lem}
% Only a rapidly decreasing contribution to the asymptotics of (\ref{eqn:S_FIO_4_lambda})
% is lost, if integration in $\mathrm{d}\mathbf{\Omega}$ is restricted to the locus 
% $\|\mathbf{\Omega}\|\le C'\,\lambda^{-11/24}$
% for some $C'\gg 0$.
%\end{lem}

%Thus we can multiply the amplitude (\ref{eqn:amplitude_4_lambda}) by a further cut-off of the form
%$\gamma_6\left(\lambda^{11/24}\,\mathbf{\Omega}\right)$, which we implicitly incorporate in $\beta_\lambda$.
 %

Given that we have now reduced integration to a shrinking 
domain where $(\mathbf{v},\theta,\nu)=O\left(\lambda^{-11/24}\right)$, and by assumption
$\|(\theta_1,\mathbf{w}_1)\|/\sqrt{\lambda}<C\,\lambda^{-11/24}$, we see from
(\ref{eqn:phase_4_lambda}) that
\begin{equation}
 \label{eqn:r-derivative}
 \partial_r\Psi_{4\lambda}^{(\ell)}=\tau\,
 \left[\sigma_Q(x_{1\lambda},\Omega)+O\left(\tau\right)\right]+O\left(\lambda^{-11/24}\right).
\end{equation}

Since $\sigma_Q(x_{1\lambda},\Omega)$ is bounded from below by a positive constant,
(\ref{eqn:r-derivative}) implies $\left|\partial_r\Psi_{4\lambda}^{(\ell)}\right|\ge C_1\,|\tau|+O\left(\lambda^{-11/24}\right)$;
thus, where $|\tau|\ge 2\,D'\,\lambda^{-11/24}$ for some $D'\gg 0$ we have
$\left|\partial_r\Psi_{4\lambda}^{(\ell)}\right|\ge D'\,\lambda^{-11/24}$.
Again, integration by parts in $\mathrm{d}r$ implies that the corresponding contribution to the asymptotics
of (\ref{eqn:S_FIO_4_lambda}) is negligible. Thus we can multiply the amplitude (\ref{eqn:amplitude_4_lambda}) by a further cut-off of the form
$\gamma_6\left(\lambda^{11/24}\,\tau\right)$, which we implicitly incorporate in $\beta_\lambda$, without altering the asymptotics.

Let us now operate the change of variables
\begin{equation}
 \label{eqn:rescaled_variables}
 \mathbf{v}\mapsto \frac{\mathbf{v}}{\sqrt{\lambda}},\,\,\,\nu\mapsto \frac{\nu}{\sqrt{\lambda}},
 \,\,\,\theta\mapsto \frac{\theta}{\sqrt{\lambda}},\,\,\,\tau\mapsto \frac{\tau}{\sqrt{\lambda}},
\end{equation}
and rewrite (\ref{eqn:S_FIO_4_lambda}) in the form
\begin{eqnarray}
\label{eqn:S_FIO_4_lambda_rescaled}
\lefteqn{S_{\chi\cdot e^{-i\lambda(\cdot)}}^{(\varpi)}\left(x_{1\lambda},x_{2\lambda}\right)^{(\ell)}
\sim 2\pi\,d_\varpi\,\left(\frac{\lambda}{2\pi}\right)^{2d+2}\,\lambda^{-d-1-e/2}}\\
&&\cdot \int_{1/D}^{D}\,
\int_{1/D}^{D}\,\int_{B_{2d}(\mathbf{0},1)}\,\int_{\mathbb{R}^e}\,\int_{\mathbb{R}^{2d}}\,\int_{-\infty}^\infty
\int_{-\infty}^{\infty}\,e^{i\,\lambda\widetilde{\Psi}_{4\lambda}^{(\ell)}}\,
\widetilde{\mathcal{A}}_{4\lambda}^{(\ell)}\nonumber\\
&&\cdot \mathcal{V}_{S}(\mathbf{\Omega})\,\mathcal{V}_G\left(\frac{\nu}{\sqrt{\lambda}}\right)\,
\mathcal{V}\left(\frac{\theta}{\sqrt{\lambda}},\frac{\mathbf{v}}{\sqrt{\lambda}}\right)\,r^{2d}\mathrm{d}\tau\,\mathrm{d}\theta\,\mathrm{d}\mathbf{v}\,
\mathrm{d}\nu\,\mathrm{d}\mathbf{\Omega}\,\mathrm{d}r\,\mathrm{d}t,\nonumber
\end{eqnarray}
where $ \widetilde{\Psi}_{4\lambda}^{(\ell)}$ and $\widetilde{\mathcal{A}}_{4\lambda}^{(\ell)}$ are
$ \Psi_{4\lambda}^{(\ell)}$ and $\mathcal{A}_{4\lambda}^{(\ell)}$, respectively, with the rescaled
variables inserted. Explicitly, keeping in mind that $\Phi(m)=\mathbf{0}$ because $m\in M'$,
we have
\begin{eqnarray}
 \label{eqn:tilde_Psi_lambda}
 \lefteqn{ \widetilde{\Psi}_{4\lambda}^{(\ell)}}\\
 &=&
  \frac{1}{\sqrt{\lambda}}\cdot\Big\{t\,\theta-\tau
  +r\,\Big[\tau\,\sigma_Q(x,\Omega)+(\theta_1-\theta)\,\Omega'
  +\langle \mathbf{w}_1
  -\big(A_\ell\mathbf{v}+\nu_M(m)\big),\mathbf{\Omega}\rangle\Big]\Big\}\nonumber\\
  &&+\frac{1}{\lambda}\,\Big[r\tau\,\big\langle \partial_x\sigma_Q(x,\Omega),\mathbf{w}_1\big\rangle
  +r\,O\left(\tau^2\right)+r\,\omega_m\big(\nu_M(m),A_\ell\mathbf{v})\,\Omega'\nonumber\\
&&  -i\,t\,\psi_2(\mathbf{v},\mathbf{w}_2)\,e^{i\theta/\sqrt{\lambda}}
-r\,\big\langle R_2(\nu,\mathbf{v},\mathbf{w}_j),\mathbf{\Omega}\big\rangle\Big] 
+R_3\left(\frac{\mathbf{\tau}}{\sqrt{\lambda}},\frac{\mathbf{v}}{\sqrt{\lambda}},
\frac{\mathbf{w}_j}{\sqrt{\lambda}}, \frac{\nu}{\sqrt{\lambda}}\right)\nonumber\\
&=&\frac{1}{\sqrt{\lambda}}\,K^{(\ell)}_\nu+\frac{1}{\lambda}\,H^{(\ell)}_\nu+
R_3\left(\frac{\theta}{\sqrt{\lambda}},\frac{\mathbf{\tau}}{\sqrt{\lambda}},\frac{\mathbf{v}}{\sqrt{\lambda}},
\frac{\mathbf{w}_j}{\sqrt{\lambda}}, \frac{\nu}{\sqrt{\lambda}}\right),\nonumber
\end{eqnarray}
where
\begin{eqnarray}
 \label{eqn:phase_nu}
\lefteqn{K^{(\ell)}_\nu(t,\theta,r,\tau,\mathbf{v},\mathbf{\Omega})}\\
&=:&t\,\theta-\tau 
+r\,\Big[\tau\,\sigma_Q(x,\Omega)+(\theta_1-\theta)\,\Omega'+\langle \mathbf{w}_1
  -\big(A_\ell\mathbf{v}+\nu_M(m)\big),\mathbf{\Omega}\rangle\Big],\nonumber
\end{eqnarray}
\begin{eqnarray}
 \label{eqn:amplitude_nu}
\lefteqn{ H^{(\ell)}_\nu(t,\theta,r,\tau,\mathbf{v},\mathbf{\Omega})=:r\tau\,\big\langle \partial_x\sigma_Q(x,\Omega),\mathbf{w}_1\big\rangle
  +r\,O\left(\tau^2\right)}\\
  &&+r\,\omega_m\big(\nu_M(m),A_\ell\mathbf{v})\,\Omega' 
   -i\,t\,\psi_2(\mathbf{v},\mathbf{w}_2)
-r\,\big\langle R_2(\nu,\mathbf{v},\mathbf{w}_j),\mathbf{\Omega}\big\rangle.\nonumber
\end{eqnarray}
Here $O\left(\tau^2\right)$ is to be interpreted as a homogeneous quadratic term, since terms in $\tau$ of order $\ge 3$
have been incorporated in $R_3$, and similarly for $R_2(\nu,\mathbf{v},\mathbf{w}_j)$. In particular, 
$H^{(\ell)}_\nu(t,\theta,r,\tau,\mathbf{v},\mathbf{\Omega})$ is homogeneous of degree two in the rescaled variables.

Thus
\begin{eqnarray}
\label{eqn:S_FIO_4_lambda_rescaled_bis}
%\lefteqn{
S_{\chi\cdot e^{-i\lambda(\cdot)}}^{(\varpi)}\left(x_{1\lambda},x_{2\lambda}\right)^{(\ell)}
%}\\
%&\sim& (2\pi)^{-2d-1}\,d_\varpi\,\lambda^{d+1-e/2}\cdot \int_{1/D}^{D}\,
%\int_{1/D}^{D}\,\int_{B_{2d}(\mathbf{0},1)}\,\int_{\mathbb{R}^e}\,\int_{\mathbb{R}^{2d}}\,\int_{-\infty}^\infty
%\int_{-\infty}^{\infty}\,e^{i\,\sqrt{\lambda}\,K}\nonumber\\
%&&e^{iH+\lambda\,R_3}\widetilde{\mathcal{A}}_{4\lambda}^{(\ell)}\cdot \mathcal{V}_{S}(\mathbf{\Omega})\,\mathcal{V}_G(\nu)\,
%\mathcal{V}(\theta,\mathbf{v})\,r^{2d}\mathrm{d}\tau\,\mathrm{d}\theta\,\mathrm{d}\mathbf{v}\,
%\mathrm{d}\nu\,\mathrm{d}\mathbf{\Omega}\,\mathrm{d}r\,\mathrm{d}t\nonumber\\
%&=&
=(2\pi)^{-2d-1}\,d_\varpi\,\lambda^{d+1-e/2}\cdot \int_{\mathbb{R}^e}\,I(\nu,\lambda)\,\mathrm{d}\nu,
\end{eqnarray}
where
\begin{eqnarray}
\label{eqn:inner_integral_expanded_lambda}
 I(\nu,\lambda)&=&\int_{1/D}^{D}\,
\int_{1/D}^{D}\,\int_{B_{2d}(\mathbf{0},1)}\,\int_{\mathbb{R}^{2d}}\,\int_{-\infty}^\infty
\int_{-\infty}^{\infty}\,e^{i\,\sqrt{\lambda}\,K^{(\ell)}_\nu}\cdot e^{iH^{(\ell)}_\nu+\lambda\,R_3}\\
&&\cdot\widetilde{\mathcal{A}}_{4\lambda}^{(\ell)}\cdot \mathcal{V}_{S}(\mathbf{\Omega})\,
\mathcal{V}_G\left(\frac{\nu}{\sqrt{\lambda}}\right)\,
\mathcal{V}\left(\frac{\theta}{\sqrt{\lambda}},\frac{\mathbf{v}}{\sqrt{\lambda}}\right)\,
r^{2d}\,\mathrm{d}\tau\,\mathrm{d}\theta\,\mathrm{d}\mathbf{v}\,
\mathrm{d}\mathbf{\Omega}\,\mathrm{d}r\,\mathrm{d}t\nonumber
\end{eqnarray}

Integration in the rescaled variables is over an expanding ball of radius $O\left(\lambda^{1/24}\right)$.
Let us view (\ref{eqn:inner_integral_expanded_lambda}) as an oscillatory integral in $\sqrt{\lambda}$, with real phase $K^{(\ell)}_\nu$
given by (\ref{eqn:phase_nu}), and amplitude
\begin{eqnarray}
 \label{eqn:amplitude_nu_ell_expanded}
\mathcal{B}^{(\ell)}_\nu&=:&e^{iH^{(\ell)}_\nu+\lambda\,R_3}\cdot \widetilde{\mathcal{A}}_{4\lambda}^{(\ell)}\cdot \mathcal{V}_{S}(\mathbf{\Omega})\,
\mathcal{V}_G\left(\frac{\nu}{\sqrt{\lambda}}\right)\,
\mathcal{V}\left(\frac{\theta}{\sqrt{\lambda}},\frac{\mathbf{v}}{\sqrt{\lambda}}\right)\,
r^{2d}.
\end{eqnarray}

In HLC centered at $x\in X$, the amplitude $s(t,x,y)\sim \sum_{l\ge 0}t^{d-l}\,s_l(x,y)$ 
of $\Pi$ in (\ref{eqn:microlocal_Pi}) satisfies $s_0(x,x)=\pi^{-d}$. The amplitude $a$ 
of $V(\tau)$ in (\ref{eqn:microlocal_V_tau}), on the other hand, 
satisfies $a(0,x,x,\eta)=1/\mathcal{V}(x)=2\pi$ (see the discussion following (7) in \cite{p_weyl}).
Therefore, recalling the rescaling $t\mapsto \lambda\,t$ and $\eta\mapsto \lambda\,\eta$, we obtain for
$\mathcal{B}^{(\ell)}_\nu$
%Similarly to (\ref{eqn:amplitude_B}) and (\ref{eqn:amplitude_B_expanded}), we have 
an asymptotic expansion
\begin{eqnarray}
 \label{eqn:amplitude_B_expanded_nu}
\mathcal{B}^{(\ell)}_\nu&\sim& e^{iH^{(\ell)}_\nu}\,r^{2d}\cdot \overline{\chi_\varpi(g_\ell)}\cdot 
\beta_\lambda(\nu,\theta,\tau,\mathbf{v},\mathbf{w}_j)\\
&&\cdot\left(\dfrac{\lambda}{\pi}\right)^d\,\sum_{k\ge 0}\lambda^{-k/2}\,P_k(\nu,\theta,\tau,\mathbf{v},\mathbf{w}_j),\nonumber
\end{eqnarray}
where $P_k\in \mathcal{C}^\infty(t,r,\mathbf{\Omega})[\nu,\theta,\tau,\mathbf{v},\mathbf{w}_j]$ 
is a polynomial in the rescaled variables of joint degree $\le 3k$, $P_0=t^d$,
and $\beta_\lambda$ is a compactly supported
bump function, whose support in the rescaled variables is a ball of radius $O\left(\lambda^{1/24}\right)$. 
The expansion may be integrated term by term, so that 
\begin{equation}
 \label{eqn:I_nu_expanded}
I(\nu,\lambda)\sim \sum _{k\ge 0}I(\nu,\lambda)_k,
\end{equation}
where
\begin{eqnarray}
\label{eqn:I_vu_k}
 I(\nu,\lambda)_k&\sim& \dfrac{\lambda^{d-k/2}}{\pi^d}\,\overline{\chi_\varpi(g_\ell)}\cdot\int_{1/D}^{D}\,
\int_{1/D}^{D}\,\int_{B_{2d}(\mathbf{0},1)}\,\int_{\mathbb{R}^{2d}}\,\int_{-\infty}^\infty
\int_{-\infty}^{\infty}\,e^{i\,\sqrt{\lambda}\,K^{(\ell)}_\nu}\nonumber\\
&&\cdot e^{iH^{(\ell)}_\nu}\,r^{2d}\cdot  
\beta_\lambda(\nu,\theta,\tau,\mathbf{v},\mathbf{w}_j)\,P_k(\nu,\theta,\tau,\mathbf{v},\mathbf{w}_j)\nonumber\\
&&\cdot \mathrm{d}\tau\,\mathrm{d}\theta\,\mathrm{d}\mathbf{v}\,
\mathrm{d}\mathbf{\Omega}\,\mathrm{d}r\,\mathrm{d}t.
\end{eqnarray}

Let us note the following:

\begin{lem}
\label{lem:parity}
 For any $k\ge 0$, $P_k$ has the same parity as $k$.
\end{lem}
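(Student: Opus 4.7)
The plan is to track, factor by factor in the definition (\ref{eqn:amplitude_nu_ell_expanded}) of $\mathcal{B}_\nu^{(\ell)}$ after the rescaling (\ref{eqn:rescaled_variables}), how each power of $\lambda^{-1/2}$ correlates with the polynomial degree in the rescaled variables $\nu,\theta,\tau,\mathbf{v}$ and the parameters $\mathbf{w}_j$. The key structural observation is that all of these variables appear originally under the common scaling $\cdot/\sqrt{\lambda}$, so a monomial of degree $n$ in the rescaled variables naturally carries a factor $\lambda^{-n/2}$. After pulling out the prefactors $e^{iH_\nu^{(\ell)}} r^{2d}\,\overline{\chi_\varpi(g_\ell)}\,\beta_\lambda(\lambda/\pi)^d$ that have been written explicitly in (\ref{eqn:amplitude_B_expanded_nu}), what remains is a product of: (i) $\exp\!\bigl(\lambda R_3(\cdot/\sqrt{\lambda})\bigr)$, coming from the cubic-and-higher remainder in (\ref{eqn:tilde_Psi_lambda}); (ii) the Taylor expansions of the volume densities $\mathcal{V}(\theta/\sqrt{\lambda},\mathbf{v}/\sqrt{\lambda})$ and $\mathcal{V}_G(\nu/\sqrt{\lambda})$ at the origin; (iii) the Taylor expansion of $\chi_\varpi(\mathbf{e}_G(\nu/\sqrt{\lambda})\,g_\ell)$ in $\nu/\sqrt{\lambda}$; (iv) the classical symbol expansion $a\sim\sum_j a_j$ with $a_j$ homogeneous of degree $-j$ in $\eta=\lambda r\Omega$, composed with the Taylor expansion of each $a_j$ in its spatial slots around $(0,x,x)$; (v) the analogous expansion $s\sim\sum_j(\lambda t)^{d-j}s_j$ for the Szeg\"o amplitude.

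For the factors (ii), (iii), and the spatial Taylor parts in (iv), (v), a monomial of degree $n$ in the rescaled variables contributes a polynomial of degree exactly $n$ (and thus of parity $n$) to the coefficient of $\lambda^{-n/2}$. The $\lambda^{-j}$ prefactors coming from the homogeneous symbol expansions in (iv), (v) are of even parity $2j$ and therefore do not disturb the parity matching. For factor (i), writing $R_3=\sum_{n\ge 3}R_n$ with $R_n$ homogeneous of degree $n$ in the rescaled variables, one has
\begin{equation*}
\exp\!\left(\lambda R_3\!\left(\tfrac{\cdot}{\sqrt{\lambda}}\right)\right)=\sum_{m\ge 0}\frac{1}{m!}\left(\sum_{n\ge 3}\lambda^{1-n/2}R_n\right)^{\!m},
\end{equation*}
so a generic monomial $R_{n_1}\cdots R_{n_m}$ with $n_i\ge 3$ contributes with prefactor $\lambda^{m-N/2}$, where $N=n_1+\cdots+n_m\ge 3m$. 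Matching the $\lambda^{-k/2}$ level forces $N=2m+k$, which has parity $k$ and degree bounded by $3k$, since $n_i\ge 3$ yields $m\le N/3$ and hence $m\le k$.

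Finally, apply multiplicativity: if two asymptotic series $\sum_k\lambda^{-k/2}p_k$ and $\sum_k\lambda^{-k/2}q_k$ both enjoy the property that $p_k,q_k$ are polynomials of parity $k$ in the rescaled variables, then so does their Cauchy product, since $p_iq_j$ has parity $i+j$, and this equals $k$ in every summand of the coefficient of $\lambda^{-k/2}$. Combining factors (i)--(v) in this way, one reads off that each $P_k$ in (\ref{eqn:amplitude_B_expanded_nu}) is of parity $k$, which is Lemma \ref{lem:parity}. The only mildly nontrivial step is the degree bookkeeping in (i), where several elementary terms combine inside the exponential; everything else is immediate from Taylor expansion and the multiplicativity principle.
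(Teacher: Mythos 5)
Your proof is correct and takes essentially the same route as the paper's: the core observation in both is that the classical symbol expansions of $a$ and $s$ under $t\mapsto\lambda t$, $\eta\mapsto\lambda\eta$ produce only integer powers of $\lambda$, so every half-integer power in the coefficient of $\lambda^{-k/2}$ must originate from Taylor expansion in variables scaled by $\lambda^{-1/2}$, and a Taylor monomial of degree $n$ automatically carries the prefactor $\lambda^{-n/2}$, matching degree parity to the $\lambda$-level. The paper compresses this into one observation (fix $x',x''$, observe integer powers, conclude), whereas you spell out the same bookkeeping factor by factor, including the slightly delicate combinatorics of $\exp\bigl(\lambda R_3(\cdot/\sqrt{\lambda})\bigr)$, where your computation $N=2m+k$ with $m\le k$ is exactly what establishes both the parity and the degree bound $\le 3k$; that is a useful level of explicitness but not a different argument.
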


\begin{proof}
The asymptotic expansions in $t$ of the amplitude $s$ of $\Pi$ in (\ref{eqn:microlocal_Pi}) and 
in $\eta$ of the amplitude $a$ of $V(\tau)$ in (\ref{eqn:microlocal_V_tau}) go down by integer steps
of degree of homogeneity. Now consider $\mathcal{A}_{4\lambda}^{(\ell)}(x',x'')$ 
given by $\mathcal{A}_{4\lambda}^{(\ell)}$ in (\ref{eqn:amplitude_4_lambda}) 
with $x_{1\lambda}$ and $x_{2\lambda}$
replaced by some fixed $x',x''\in X$. In view of substitutions $t\mapsto \lambda\,t$ 
and $\eta\mapsto \lambda\,\eta$, $\mathcal{A}_{4\lambda}^{(\ell)}$ is given by an asymptotic expansion
in descending integer powers of $\lambda$. Therefore the appearance of fractional powers 
of $\lambda$ in (\ref{eqn:amplitude_B_expanded_nu}) is due solely to Taylor expansion 
in the rescaled variables, and this implies the claim.
\end{proof}

The proof of the following is left to the reader:

\begin{lem}
The following holds:
\begin{enumerate}
 \item $K^{(\ell)}_\nu$ has a unique stationary point
\begin{eqnarray*}
 P_\ell(\nu)&=&(t_0,\theta_0,r_0,\tau_0,\mathbf{v}_0^{(\ell)}(\nu),\mathbf{\Omega}_0)\\
 &=:&
\left(\frac{1}{\varsigma_T(x)},0,\frac{1}{\varsigma_T(x)},0,A_\ell^{-1}\big(\mathbf{w}_1-\nu_M(m)\big),\mathbf{0}\right).
\end{eqnarray*}

\item Let us define the column vector
$$
\mathbf{D}=:\left.\partial_{\mathbf{\Omega}}\ln \sigma_Q\right|_{(x,\alpha_x)}.
$$
Then the Hessian matrix at the critical point is
$$
\mathrm{Hess}(K^{(\ell)}_\nu)_{P_\ell(\nu)}=\begin{pmatrix}
            0&1&0&0&\mathbf{0}^t&\mathbf{0}^t\\
            1&0&-1&0&\mathbf{0}^t&\mathbf{0}^t\\
            0&-1&0&\varsigma_T(x)&\mathbf{0}^t&\mathbf{0}^t\\
            0&0&\varsigma_T(x)&0&\mathbf{0}^t&\mathbf{D}^t\\
            \mathbf{0}&\mathbf{0}&\mathbf{0}&\mathbf{0}&[0]&-A_\ell^t/\varsigma_T(x)\\
            \mathbf{0}&\mathbf{0}&\mathbf{0}&\mathbf{D}&-A_\ell/\varsigma_T(x)&-\theta_1\,I_{2d}
           \end{pmatrix}.
$$
\item The determinant of $\mathrm{Hess}(K^{(\ell)}_\nu)_{P_\ell (\nu)}$
is
$$
\det\left(\mathrm{Hess}(K^{(\ell)}_\nu)_{P_0^{(\ell)}(\nu)}\right)=\varsigma_T(x)^{2-4d}.
$$
\item The signature of $\mathrm{Hess}(K^{(\ell)}_\nu)_{P_0^{(\ell)}(\nu)}$
is zero.
\item At the critical point, we have $K^{(\ell)}_\nu\big(P_0^{(\ell)}(\nu)\big)=\theta_1/\varsigma_T(x)$ and
\begin{eqnarray}
 \label{eqn:exponent_critical_1}
 \lefteqn{i\,H^{(\ell)}_\nu\big(P_0^{(\ell)}(\nu)\big)}\\
 &=&\frac{1}{\varsigma_T(x)}\,
\Big[i\,\omega_m\big(\nu_M(m),\mathbf{w}_1)+\psi_2\big(\mathbf{w}_1-\nu_M(m),\,A_\ell\mathbf{w}_2\big)\Big].
\nonumber
\end{eqnarray}
\item The inverse of the Hessian matrix at the critical point is:
%\begin{equation*}
%\mathrm{Hess}(K^{(\ell)}_\nu)_{P_\ell(\nu)}^{-1}= \begin{pmatrix}
% 0&1&0&1/\varsigma_T&\mathbf{D}^t(A_\ell^t)^{-1}&\mathbf{0}^t\\
 %           1&0&0&0&\mathbf{0}^t&\mathbf{0}^t\\
%0&0&0&1/\varsigma_T&\mathbf{D}^t(A_\ell^t)^{-1}&\mathbf{0}^t\\
%1/\varsigma_T&0&1/\varsigma_T&0&\mathbf{0}^t&\mathbf{0}^t\\
%A_\ell^{-1}\mathbf{D}&\mathbf{0}&A_\ell^{-1}\mathbf{D}&\mathbf{0}&\theta_1\,\varsigma_T(x)^2\,I_{2d}&-\varsigma_T(x)\,A_\ell^{-1}\\
 %           \mathbf{0}&\mathbf{0}&\mathbf{0}&\mathbf{0}&-\varsigma_T(x)\,(A_\ell^t)^{-1}&[0]\end{pmatrix}.
%\end{equation*}
\begin{equation*}
\mathrm{Hess}(K^{(\ell)}_\nu)_{P_\ell(\nu)}^{-1}= \begin{pmatrix}
 0&1&0&1/\varsigma_T(x)&\mathbf{D}^t\,A_\ell&\mathbf{0}^t\\
            1&0&0&0&\mathbf{0}^t&\mathbf{0}^t\\
0&0&0&1/\varsigma_T(x)&\mathbf{D}^t\,A_\ell&\mathbf{0}^t\\
1/\varsigma_T(x)&0&1/\varsigma_T(x)&0&\mathbf{0}^t&\mathbf{0}^t\\
A_\ell^{t}\,\mathbf{D}&\mathbf{0}&A_\ell^{t}\,\mathbf{D}&\mathbf{0}&\theta_1\,\varsigma_T(x)^2\,I_{2d}&-\varsigma_T(x)\,A_\ell^t\\
            \mathbf{0}&\mathbf{0}&\mathbf{0}&\mathbf{0}&-\varsigma_T(x)\,A_\ell&[0]\end{pmatrix}
\end{equation*}
(recall that $A_\ell\,A_\ell^t=I_{2d}$).
\item The third order remainder of the phase at the critical point is
$$
R_K^{(3)}=r\,\Big[\tau\cdot\sigma_Q(x,\Omega)^{(1)}-\theta\cdot\big(\Omega'\big)^{(1)}
  \Big],
$$
where $\sigma_Q(x,\Omega)^{(1)}$ and $\big(\Omega'\big)^{(1)}$ are the first order remainders
at the origin $\mathbf{0}\in \mathbb{R}^{2d}$ as functions of $\mathbf{\Omega}$.
In particular, as far as the rescaled variables are concerned, it only depends
on $\tau$ and $\theta$, and is linear in them.

\end{enumerate}

\end{lem}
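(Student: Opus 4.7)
The plan is to verify items (1)--(7) by direct computation from the explicit forms of $K^{(\ell)}_\nu$ in (\ref{eqn:phase_nu}) and $H^{(\ell)}_\nu$ in (\ref{eqn:amplitude_nu}). For item (1), I would solve $\nabla K^{(\ell)}_\nu = 0$ sequentially: $\partial_t K = \theta$ gives $\theta_0 = 0$; the equation $\nabla_{\mathbf{v}} K = -r\,A_\ell^t\,\mathbf{\Omega}$, combined with invertibility of $A_\ell$ and $r > 0$, forces $\mathbf{\Omega}_0 = \mathbf{0}$; substituting into $\partial_\tau K = -1 + r\,\sigma_Q(x,\Omega)$ and using (\ref{eqn:symbol_T_Q}) yields $r_0 = 1/\varsigma_T(x)$; then $\partial_\theta K = t - r\,\Omega'$ gives $t_0 = 1/\varsigma_T(x)$; finally, $\nabla_{\mathbf{\Omega}} K = 0$ solves for $\mathbf{v}_0 = A_\ell^{-1}\big(\mathbf{w}_1 - \nu_M(m)\big)$, uniqueness following from the affine structure of the remaining equations.

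For items (2) and (3), I would tabulate the second partials at $P_\ell(\nu)$, using $\Omega' = 1 - \tfrac{1}{2}\|\mathbf{\Omega}\|^2 + O(\|\mathbf{\Omega}\|^4)$ and the Taylor expansion of $\sigma_Q(x,\Omega)$ in $\mathbf{\Omega}$ around $\alpha_x$, which accounts for the vector $\mathbf{D}$ appearing in the $(\tau,\mathbf{\Omega})$ mixed partials. The determinant can then be computed by cofactor expansion along the $t$-row, whose only non-zero entry is the $1$ in the $\theta$-column: this reduces the scalar block to a $3 \times 3$ determinant evaluating to $\varsigma_T(x)^2$. The $(\mathbf{v},\mathbf{\Omega})$ block is of the form treated in Lemma~\ref{lem:signature_1} with $R = -A_\ell/\varsigma_T(x)$, giving determinant $(-1)^{2d}\det(R)^2 = \varsigma_T(x)^{-4d}$ (using $\det A_\ell = 1$, since $A_\ell$ is unitary and symplectic); multiplying the two contributions yields $\varsigma_T(x)^{2-4d}$. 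The $\mathbf{D}$-coupling between the two blocks lives in a single row and column of the scalar block whose diagonal entry is zero, so it does not affect the determinant.

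I expect the signature claim (4) to be the main obstacle. The $(\mathbf{v},\mathbf{\Omega})$ block has signature zero by Lemma~\ref{lem:signature_1}, since $\det(-A_\ell/\varsigma_T(x)) = \varsigma_T(x)^{-2d} > 0$; the scalar block is itself of the anti-block-diagonal shape $\begin{pmatrix} 0 & J \\ J^t & 0 \end{pmatrix}$ for a non-degenerate $2 \times 2$ matrix $J$, so has signature zero by the same Lemma. To handle the off-block coupling via $\mathbf{D}$ and the diagonal term $-\theta_1 I_{2d}$, I would deform $(\mathbf{D},\theta_1) \to (s\,\mathbf{D}, s\,\theta_1)$, $s \in [0,1]$: the determinant computed in item (3) is independent of these parameters, so non-degeneracy persists along the path, and the signature (locally constant on non-degenerate symmetric matrices) remains zero throughout.

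For item (5), I would substitute the critical point into the phase: $K^{(\ell)}_\nu(P_\ell(\nu)) = r_0\,\theta_1\,\Omega'_0 = \theta_1/\varsigma_T(x)$, while $H^{(\ell)}_\nu(P_\ell(\nu))$ reduces to $\tfrac{1}{\varsigma_T(x)}\big[i\,\omega_m(\nu_M(m), A_\ell\mathbf{v}_0) - i\,\psi_2(\mathbf{v}_0,\mathbf{w}_2)\big]$ after the quadratic-in-$\tau$ and $R_2$ terms vanish at $\tau_0 = 0$, $\mathbf{\Omega}_0 = \mathbf{0}$. Using $A_\ell \mathbf{v}_0 = \mathbf{w}_1 - \nu_M(m)$, antisymmetry of $\omega_m$ kills the $\omega_m(\nu_M(m),\nu_M(m))$ term, and the unitary invariance $\psi_2(A_\ell^{-1}\mathbf{u},\mathbf{w}) = \psi_2(\mathbf{u},A_\ell\mathbf{w})$ (which holds because $A_\ell$ preserves both $\omega_m$ and the norm) yields (\ref{eqn:exponent_critical_1}). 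Item (6) follows from block Schur-complement inversion applied to the Hessian of (2), using the explicit inverse of the $(\mathbf{v},\mathbf{\Omega})$ block provided by the structure of Lemma~\ref{lem:signature_1}. Finally, for item (7), cubic-and-higher contributions to the Taylor expansion of $K^{(\ell)}_\nu$ at $P_\ell(\nu)$ can only arise from the nonlinearities of $\sigma_Q(x,\Omega)$ and $\Omega'$ as functions of $\mathbf{\Omega}$, paired respectively with $r\tau$ and $-r\theta$; reading off the terms beyond first order in $\mathbf{\Omega}$ gives the stated $R_K^{(3)}$.
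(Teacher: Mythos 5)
Your derivation of the stationary point in item (1) has a concrete gap that, if you fill it, would reveal a discrepancy with the claimed $P_\ell(\nu)$ itself. You solve five of the six blocks of equations ($\partial_t K$, $\nabla_{\mathbf v}K$, $\partial_\tau K$, $\partial_\theta K$, $\nabla_{\mathbf\Omega}K$) but you never impose $\partial_r K^{(\ell)}_\nu=0$, and you never actually derive $\tau_0=0$; it simply appears in your expression for $\mathbf v_0$ (where, had you kept $\tau$ free, the equation $\nabla_{\mathbf\Omega}K=0$ would read $\tau\,\partial_{\mathbf\Omega}\sigma_Q|_0+\mathbf w_1-A_\ell\mathbf v-\nu_M(m)=\mathbf 0$, so that $\mathbf v_0$ depends on $\tau_0$). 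Now take the missing equation: from (\ref{eqn:phase_nu}),
\begin{equation*}
\partial_r K^{(\ell)}_\nu=\tau\,\sigma_Q(x,\Omega)+(\theta_1-\theta)\,\Omega'+\big\langle\mathbf w_1-(A_\ell\mathbf v+\nu_M(m)),\mathbf\Omega\big\rangle,
\end{equation*}
and at the claimed $P_\ell(\nu)$ (where $\tau_0=0$, $\theta_0=0$, $\mathbf\Omega_0=\mathbf 0$, $\Omega'_0=1$, $A_\ell\mathbf v_0=\mathbf w_1-\nu_M(m)$) this evaluates to $\theta_1$, not $0$. So $P_\ell(\nu)$ is not a critical point of $K^{(\ell)}_\nu$ unless $\theta_1=0$; the genuine critical point has $\tau_0=-\theta_1/\varsigma_T(x)$ and a correspondingly shifted $\mathbf v_0$. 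A careful proof of the Lemma would therefore have to either restrict to $\theta_1=0$, or re-derive the critical point with the $\tau$- and $\mathbf v$-corrections (and note that the critical value $K(P_0)=\theta_1/\varsigma_T(x)$ survives these corrections because the $\tau$-terms cancel), or explicitly argue that the $O(\theta_1)$ shift of the critical point is absorbed into the lower-order terms of the expansion since $\theta_1/\sqrt\lambda=O(\lambda^{-11/24})$. As written, your plan silently assumes $\tau_0=0$.

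A second, related point: your plan says you would tabulate the second partials at $P_\ell(\nu)$ using $\Omega'=1-\tfrac12\|\mathbf\Omega\|^2+O(\|\mathbf\Omega\|^4)$. If you carry this out for the $(\mathbf\Omega,\mathbf\Omega)$ block you get $r_0(\theta_1-\theta_0)\,\partial_{\mathbf\Omega}^2\Omega'|_{\mathbf 0}=-\theta_1\,I_{2d}/\varsigma_T(x)$, which disagrees by a factor of $1/\varsigma_T(x)$ with the $-\theta_1 I_{2d}$ in the Lemma (and this propagates to the $(\mathbf v,\mathbf v)$ block of the inverse in item (6), which should then be $\theta_1\,\varsigma_T(x)\,I_{2d}$). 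A proof that ``tabulates the second partials'' should either confirm the stated entry or flag the discrepancy; your proposal does neither. The determinant and signature computations in items (3)--(4) are indeed insensitive to the $S$-block by Lemma~\ref{lem:signature_1}, so those conclusions survive; and your reduction to Lemma~\ref{lem:signature_1}, the cofactor expansion, the homotopy argument for the signature, and the verification of the critical values in item (5) are all along the right lines. But as it stands, item (1) is incomplete in a way that matters, and item (2) does not match a direct computation.
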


Therefore, the gradient of $K^{(\ell)}_\nu$ is bounded below in norm, uniformly in $\nu$ and $\ell$, 
by a fixed positive constant when
$P=(t,\theta,r,\tau,\mathbf{v},\mathbf{\Omega})\in \mathbb{R}^4\times \mathbb{R}^{2d}\times B_{2d}(\mathbf{0},1)$ 
remains at distance $\ge a$ from $P_0$, where $a>0$ is fixed. Let 
$\kappa \in \mathcal{C}^\infty_0\left(\mathbb{R}^4\times \mathbb{R}^{2d}\times B_{2d}(\mathbf{0},1)\right)$ be 
identically $\equiv 1$ at distance $\le a$ from the origin, and
set $\kappa_\nu(P)=:\kappa\big(P-P_0^{(\ell)}(\nu)\big)$. Then we can write $I(\nu,\lambda)=I(\nu,\lambda)'
+I(\nu,\lambda)''$, where $I(\nu,\lambda)'$ and $I(\nu,\lambda)''$ are given by (\ref{eqn:inner_integral_expanded_lambda})
with the amplitude multiplied by $\kappa_\nu(P)$ and $1-\kappa_\nu(P)$, respectively.

\begin{lem}
 \label{lem:I_nu_rapid_decrease}
Uniformly in $\nu$, we have $I(\nu,\lambda)''=O\left(\lambda^{-\infty}\right)$ as $\lambda\rightarrow+\infty$.
\end{lem}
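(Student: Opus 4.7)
The plan is a standard non-stationary phase argument via iterated integration by parts. On $\mathrm{supp}(1-\kappa_\nu)$ the real phase $K^{(\ell)}_\nu$ has nonvanishing gradient, since the unique critical point $P_\ell(\nu)$ has been excised by the cut-off; accordingly, I would introduce the first-order operator
$$
L_\nu\,=\,\frac{1}{i\sqrt{\lambda}\,\|\nabla_P K^{(\ell)}_\nu\|^2}\,\sum_j\bigl(\partial_j K^{(\ell)}_\nu\bigr)\,\partial_j,
$$
which satisfies $L_\nu\bigl(e^{i\sqrt{\lambda}\,K^{(\ell)}_\nu}\bigr)=e^{i\sqrt{\lambda}\,K^{(\ell)}_\nu}$, and then gain arbitrary negative powers of $\sqrt{\lambda}$ by applying its transpose $L_\nu^t$ repeatedly to the amplitude.

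The first step, and the crux of the argument, is to establish a uniform lower bound
$$
\|\nabla_P K^{(\ell)}_\nu(P)\|\,\ge\,c_0\,>\,0
\qquad\text{on }\,\mathrm{supp}(1-\kappa_\nu)\cap\mathrm{supp}(\mathcal{B}^{(\ell)}_\nu),
$$
with $c_0$ independent of $\nu$, $\ell$, and $\lambda$. Using the componentwise formulas for $\nabla K^{(\ell)}_\nu$ and the $\nu$-independent nonvanishing Hessian determinant $\varsigma_T(x)^{2-4d}$ at $P_\ell(\nu)$, one obtains $\|\nabla K^{(\ell)}_\nu\|\gtrsim \|P-P_\ell(\nu)\|$ in a uniform neighborhood of the critical point. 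Away from $P_\ell(\nu)$ in the $\mathbf{v}$ direction, the identity $\partial_{\mathbf{\Omega}}K^{(\ell)}_\nu=-rA_\ell\bigl(\mathbf{v}-\mathbf{v}_0^{(\ell)}(\nu)\bigr)+(\text{bounded lower order})$ gives a direct lower bound proportional to $\|\mathbf{v}-\mathbf{v}_0^{(\ell)}(\nu)\|$; away from $P_\ell(\nu)$ in the remaining (essentially compact) directions $(t,\theta,r,\tau,\mathbf{\Omega})$, the non-degeneracy of the corresponding sub-Hessian (whose inverse is bounded uniformly) suffices.

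With this bound in hand, for any integer $N$ we have
$$
I(\nu,\lambda)''\,=\,\int e^{i\sqrt{\lambda}\,K^{(\ell)}_\nu}\,(L_\nu^t)^N\bigl[(1-\kappa_\nu)\,\mathcal{B}^{(\ell)}_\nu\bigr]\,\mathrm{d}P,
$$
boundary terms vanishing by virtue of the cut-offs $\beta_\lambda$, $\gamma_2$ and $1-\kappa_\nu$. On the support of these cut-offs the rescaled variables satisfy $\|\mathbf{v}\|,\|\nu\|,|\theta|,|\tau|=O(\lambda^{1/24})$, so $\lambda R_3(\,\cdot/\sqrt{\lambda})=O(\lambda^{-3/8})$ (whence $e^{\lambda R_3}$ and its derivatives in $P$ are bounded), while each $P$-derivative of $e^{iH^{(\ell)}_\nu}$ costs at most $O(\lambda^{1/24})$; the Gaussian bound $|e^{iH^{(\ell)}_\nu}|\le e^{-c\,\|\mathbf{v}-\mathbf{w}_2\|^2}$ coming from $\mathrm{Re}(iH^{(\ell)}_\nu)=-\tfrac{t}{2}\|\mathbf{v}-\mathbf{w}_2\|^2$ localizes $\mathbf{v}$ effectively. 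Similarly the coefficients $(\partial_j K^{(\ell)}_\nu)/\|\nabla K^{(\ell)}_\nu\|^2$ and their derivatives are $O(\lambda^{1/24})$, since $\partial_j K^{(\ell)}_\nu$ grows at most linearly in the rescaled variables and $\|\nabla K^{(\ell)}_\nu\|\ge c_0$. Hence each application of $L_\nu^t$ yields a net factor of order $\lambda^{-1/2}\cdot\lambda^{1/12}=\lambda^{-5/12}$ (in the worst case), and after $N$ iterations the integrand is dominated by $C_N\,\lambda^{d-5N/12}\,e^{-c\|\mathbf{v}-\mathbf{w}_2\|^2}$ uniformly in $\nu$. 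Integrating over a domain whose volume grows only polynomially in $\lambda$ produces $I(\nu,\lambda)''=O(\lambda^{-M})$ for any prescribed $M$, uniformly in $\nu$, upon choosing $N$ large enough.

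The main obstacle will be the uniform-in-$\nu$ gradient lower bound: the critical point $P_\ell(\nu)$, and in particular its $\mathbf{v}$-coordinate $A_\ell^{-1}\bigl(\mathbf{w}_1-\nu_M(m)\bigr)$, drifts with $\nu$ while $\nu$ itself varies over an expanding ball of radius $O(\lambda^{1/24})$, so some care with the explicit gradient formulas is required to verify that the lower bound is genuinely independent of $\nu$ and not a merely local estimate with a $\nu$-dependent constant.
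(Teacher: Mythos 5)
Your proof follows the same non-stationary phase / iterated integration by parts scheme as the paper, relying on the same two ingredients: the uniform lower bound on $\|\nabla_P K^{(\ell)}_\nu\|$ off the excised neighbourhood of $P_\ell(\nu)$ (which the paper likewise states just before the lemma), and the $O(\lambda^{1/24})$ cost per differentiation of the amplitude owing to $e^{iH^{(\ell)}_\nu}$. You have in fact corrected two apparent misprints in the paper's own version of this argument: since the oscillatory parameter in $e^{i\sqrt{\lambda}\,K^{(\ell)}_\nu}$ is $\sqrt{\lambda}$, each integration by parts gains $\lambda^{-1/2}$ (not $\lambda^{-1}$ as the paper writes), giving a net $\lambda^{-5/12}$ per step, and the integration domain in the rescaled variables has radius $O(\lambda^{1/24})$ (not $\lambda^{1/9}$); neither change affects the conclusion, since the polynomial volume growth is overwhelmed for $N$ large.
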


\begin{proof}
Where $1-\kappa_\nu\neq 0$, we can \lq integrate by parts\rq\, in 
$\mathrm{d}t\,\mathrm{d}\theta\,\mathrm{d}r\,\mathrm{d}\tau
\,\mathrm{d}\mathbf{v}\, \mathrm{d}\mathbf{\Omega}$ using the previous remark, and noting that the integrand
is compactly supported (with an expanding support). At each step, as in the usual proof of the stationary phase Lemma,
we get a factor $\lambda^{-1}$; furthermore, differentiation of the amplitude, and of the coefficients 
of the first order operator involved, introduces in view of the factor
$e^{iH^{(\ell)}_\nu}$ and (\ref{eqn:amplitude_nu}) a factor
$O\left(\lambda^{1/24}\lambda^{1/24}\right)=O\left(\lambda^{1/12}\right)$. 
Thus we obtain at each step a factor $O\left(\lambda^{-11/12}\right)$, whence after $N$
step a factor $O\left(\lambda^{-11N/12}\right)$. Since integration
is over a ball of radius $O\left(\lambda^{1/9}\right)$, the statement follows.
\end{proof}

Hence as far at the asymptotics are concerned we need only consider $I(\nu,\lambda)'$, which can be estimated
applying the stationary phase Lemma. Let us first note the following, whose proof is also left to the reader:

\begin{lem}
 \label{lem:exponent_rewritten}
Let the components $\mathbf{w}_{j\mathrm{h}},\,\mathbf{w}_{j\mathrm{v}},\,\mathbf{w}_{j\mathrm{t}}$
be as in (\ref{eqn:hvt}). Then (\ref{eqn:exponent_critical_1}) may be rewritten
\begin{eqnarray*}
 %\label{eqn:phase_H_1}
 i\,H^{(\ell)}_\nu\big(P_0^{(\ell)}(\nu)\big)&=&\mathcal{T}_\ell(\mathbf{w}_1,\mathbf{w}_2)
\nonumber\\&&+\frac{1}{\varsigma_T(x)}\,\Big[
i\,\omega_0\big(\nu_M(m),\mathbf{w}_{1\mathrm{t}}+A_\ell\mathbf{w}_{2\mathrm{t}}\big)
-\frac 12\,\Big\|\nu_M(m)\Big\|^2\Big],
\end{eqnarray*}
where
\begin{eqnarray*}
 \mathcal{T}_\ell(\mathbf{w}_1,\mathbf{w}_2)&=:&\frac{1}{\varsigma_T(x)}\,
 \Big[\psi_2\big(\mathbf{w}_{1\mathrm{h}},A_\ell\mathbf{w}_{2\mathrm{h}}\big)
-\frac 12\,\big\|\mathbf{w}_{1\mathrm{t}}-A_\ell\mathbf{w}_{2\mathrm{t}}\big\|^2\Big]\\
&&+\frac{i}{\varsigma_T(x)}\,\Big[\omega_0\big(\mathbf{w}_{1\mathrm{v}},\mathbf{w}_{1\mathrm{t}}\big)
-\omega_0\big(\mathbf{w}_{2\mathrm{v}},\mathbf{w}_{2\mathrm{t}}\big)\Big].
\end{eqnarray*}
\end{lem}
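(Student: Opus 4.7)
The proof is a direct algebraic rewriting of the expression given by item~(5) of the preceding lemma,
\[
\varsigma_T(x)\cdot iH^{(\ell)}_\nu\bigl(P_0^{(\ell)}(\nu)\bigr)
=i\omega_m\bigl(\nu_M(m),\mathbf{w}_1\bigr)+\psi_2\bigl(\mathbf{w}_1-\nu_M(m),\,A_\ell\mathbf{w}_2\bigr),
\]
obtained by decomposing every $\mathbf{w}_j$ and $\nu_M(m)$ under the orthogonal splitting $T_mM=T_mM_{\mathrm{hor}}\oplus T_mM_{\mathrm{ver}}\oplus T_mM_{\mathrm{trasv}}$. The first step is to unfold $\psi_2$ by its definition $\psi_2(X,Y)=-i\omega(X,Y)-\tfrac12\|X-Y\|^2$, and to treat the pure-imaginary symplectic part and the real norm-squared part separately.

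Four geometric inputs drive the algebra. First, $\nu_M(m)\in T_mM_{\mathrm{ver}}$. Second, on $\Phi^{-1}(\mathbf 0)$ the orbit directions are $\omega$-isotropic, since $\omega(\xi_M,\eta_M)(m)=-\Phi_{[\xi,\eta]}(m)$ vanishes there. Third, using $\omega=g(J\cdot,\cdot)$ together with $J(T_mM_{\mathrm{ver}})=T_mM_{\mathrm{trasv}}$ and the mutual orthogonality of the three summands, $\omega$ vanishes between $T_mM_{\mathrm{hor}}$ and either of the other two summands and also between any two transversal vectors. Fourth, because $g_\ell\in G^X_m\subseteq G^M_m$ fixes $m$, preserves the orbit $G\cdot m$ and preserves $M'$, the unitary differential $A_\ell$ preserves each of the three summands and hence respects both $\omega$ and $\|\cdot\|^2$ block by block. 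Applied together, these inputs collapse $i\omega_m(\nu_M(m),\mathbf{w}_1)$ to $i\omega_m(\nu_M(m),\mathbf{w}_{1\mathrm{t}})$, cut the $\omega$-part of $\psi_2$ down to the three surviving pairings hor--hor, ver--trasv and trasv--ver, and split the norm-squared orthogonally into its three blocks.

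Collecting the horizontal contributions reproduces $\psi_2(\mathbf{w}_{1\mathrm h},A_\ell\mathbf{w}_{2\mathrm h})$, while the transversal block yields $-\tfrac12\|\mathbf{w}_{1\mathrm t}-A_\ell\mathbf{w}_{2\mathrm t}\|^2$; together these are the two real $\mathcal T_\ell$-terms. Isolating the $\nu_M(m)$-dependence from the ver block of the norm-squared and from the surviving $\omega$ cross-pairings then produces the quadratic residue $-\tfrac12\|\nu_M(m)\|^2$ together with the linear pairing $i\omega_0(\nu_M(m),\mathbf{w}_{1\mathrm t}+A_\ell\mathbf{w}_{2\mathrm t})$, while the residual ver--trasv pairings, which involve neither $\nu_M(m)$ nor the horizontal components, reassemble into the imaginary $\omega_0(\mathbf{w}_{j\mathrm v},\mathbf{w}_{j\mathrm t})$ pieces entering $\mathcal T_\ell$. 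The main obstacle is purely one of bookkeeping: the orthogonal splitting produces a ver-block cross-term of the form $\|\mathbf{w}_{1\mathrm v}-\nu_M(m)-A_\ell\mathbf{w}_{2\mathrm v}\|^2$, and it is a delicate balancing between real norm-squared and imaginary $\omega$ contributions, exploiting repeatedly the identity $\omega(u,v)=g(Ju,v)$ under which $J$ swaps the ver and trasv summands together with the symplecticity of $A_\ell$, that reduces this cross-term to the clean expression $\|\nu_M(m)\|^2$ appearing in the statement.
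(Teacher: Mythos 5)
Your opening moves are all sound: the unitarity of $A_\ell$ lets you rewrite $\psi_2(A_\ell^{-1}(\mathbf{w}_1-\nu_M(m)),\mathbf{w}_2)$ as $\psi_2(\mathbf{w}_1-\nu_M(m),A_\ell\mathbf{w}_2)$, the three subspaces are pairwise orthogonal and $J$ exchanges ver and trasv, so the only non-vanishing $\omega$-pairings are hor--hor and ver--trasv, and $A_\ell$ is block-diagonal with respect to the splitting. All of this correctly isolates the horizontal block as $\psi_2(\mathbf{w}_{1\mathrm h},A_\ell\mathbf{w}_{2\mathrm h})$ and the transversal norm block as $-\tfrac12\|\mathbf{w}_{1\mathrm t}-A_\ell\mathbf{w}_{2\mathrm t}\|^2$.

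The gap is at the very step you flag as ``a delicate balancing\ldots that reduces this cross-term to the clean expression $\|\nu_M(m)\|^2$.'' You assert this reduction but never carry it out, and in fact no purely algebraic balancing can do it. If you set $\mathbf{n}=\nu_M(m)$ and expand everything block by block, the left-hand side minus the claimed right-hand side equals
\begin{equation*}
\frac{1}{\varsigma_T(x)}\Big[-i\,\omega_0\big(\mathbf{w}_{1\mathrm v}-A_\ell\mathbf{w}_{2\mathrm v},\,\mathbf{w}_{1\mathrm t}+A_\ell\mathbf{w}_{2\mathrm t}\big)-\tfrac 12\big\|\mathbf{w}_{1\mathrm v}-A_\ell\mathbf{w}_{2\mathrm v}\big\|^2+g\big(\mathbf{w}_{1\mathrm v}-A_\ell\mathbf{w}_{2\mathrm v},\,\mathbf{n}\big)\Big],
\end{equation*}
a residual that is manifestly nonzero for generic $\mathbf{w}_1,\mathbf{w}_2$ (it contains a term linear in $\mathbf{n}$ that no rearrangement among the $\mathbf{w}$-dependent pieces can cancel, since the right-hand side of the lemma has no such linear term beyond $\omega_0(\mathbf{n},\mathbf{w}_{1\mathrm t}+A_\ell\mathbf{w}_{2\mathrm t})$). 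A one-line check in the case $d=e=1$, $A_\ell=I$, $\mathbf{w}_1=0$ already exhibits the discrepancy $-\tfrac12\|\mathbf{w}_{2\mathrm v}\|^2-g(\mathbf{n},\mathbf{w}_{2\mathrm v})+i\,\omega_0(\mathbf{w}_{2\mathrm v},\mathbf{w}_{2\mathrm t})$.

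What is true, and what the subsequent Gaussian integral in the paper actually exploits, is that this residual is exactly what is produced by translating the integration variable via $\mathbf{n}\mapsto\mathbf{n}+(\mathbf{w}_{1\mathrm v}-A_\ell\mathbf{w}_{2\mathrm v})$ --- a shift absorbed by the linear isomorphism $\mathrm{val}_m$ so that $\mathrm{d}\nu$ is unchanged. After that completion of the square the exponent does take the form stated, and the value of $\displaystyle\int e^{iH^{(\ell)}_\nu(P_0^{(\ell)}(\nu))}\,\mathrm{d}\nu$ comes out as claimed. A correct write-up therefore needs to either display the residual terms explicitly and show they are killed by this change of variable in the $\nu$-integral, or correct the statement to include them; the asserted pointwise cancellation is not available.
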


\begin{rem}
 Here $\omega_0$ and $\|\cdot\|$ are the standard symplectic structure and norm, respectively,
on $\mathbb{R}^{2d}$, and they correspond to the symplectic structure $\omega_m$ 
and norm $\|\cdot \|_m$ on $T_mM$ in the given HLC
system centered at $x$.
\end{rem}

Let us set
$$
\mathcal{D}=:\big(\partial_t,\,\partial_\theta,\,\partial_r,\,\partial_\tau,\,\partial_\mathbf{v},\partial_\mathbf{\Omega}\big)^T
$$
and
\begin{eqnarray}
\label{eqn:defn_L_K}
L_K&=:&\left\langle \mathcal{D},\mathrm{Hess}(K^{(\ell)}_\nu)_{P_\ell(\nu)}^{-1}\,\mathcal{D}\right\rangle\\
&=&2\,\left[\dfrac{\partial^2}{\partial t\partial\theta}+\frac{1}{\varsigma_T(x)}\,\dfrac{\partial^2}{\partial t\partial\tau}
+\frac{1}{\varsigma_T(x)}\,\dfrac{\partial^2}{\partial r\partial\tau}+
\mathbf{D}^t\,A_\ell\,\left(\dfrac{\partial^2}{\partial t\partial\mathbf{v}}+\dfrac{\partial^2}{\partial r\partial\mathbf{v}}\right)\right.
\nonumber\\
&&\left.-\varsigma_T(x)\,\left\langle \frac{\partial}{\partial\mathbf{\Omega}},A_\ell\frac{\partial}{\partial\mathbf{v}}\right\rangle\right]
+\theta_1\,\varsigma_T(x)^2\,\left\langle \frac{\partial}{\partial\mathbf{v}},\frac{\partial}{\partial\mathbf{v}}\right\rangle.\nonumber
\end{eqnarray}

By the stationary phase Lemma \cite{hor}, each $I(\nu,\lambda)_k$ is given by an asymptotic expansion
of the following form
\begin{eqnarray}
\label{eqn:I_vu_k_expanded}
 I(\nu,\lambda)_k&\sim& (2\pi)^{2+2d}\,\pi^{-d}\,\lambda^{-1-k/2}\,\varsigma_T(x)^{2d-1}\,\overline{\chi_\varpi(g_\ell)}\cdot 
e^{i\sqrt{\lambda}\,\theta_1/\varsigma_T(x)}\nonumber\\
&&\cdot\sum_{j\ge 0}\,\lambda^{-j/2}\,\frac{1}{i^j}\,\left.L_j\left(e^{iH^{(\ell)}_\nu}\, r^{2d}\cdot  
P_k\right)\right|_{P_0}
\end{eqnarray}
where
\begin{equation}
 \label{eqn:L_varphi}
L_j(\varphi)=:\sum_{a-b=j}\,\sum_{2a\ge 3b}\,\dfrac{1}{2^a\,a!\,b!}\,L_K^a\left(\varphi\cdot \left(R_K^{(3)}\right)^b\right);
\end{equation}
notice that $\beta_\lambda$ is identically equal to $1$ on the present restricted domain of integration.
Furthermore $a\le 3j$ in the range of summation.

The leading term in (\ref{eqn:I_vu_k_expanded}) is therefore
\begin{eqnarray*}
\frac{(2\pi)^{2(1+d)}}{\pi^{d}}\,\lambda^{-1-k/2}\,\varsigma_T(x)^{-1-d}\,\overline{\chi_\varpi(g_\ell)}\cdot 
e^{i\sqrt{\lambda}\,\theta_1/\varsigma_T(x)+i\,H^{(\ell)}_\nu\big(P_0^{(\ell)}(\nu)\big)}\cdot  
R_k(P_0),
\end{eqnarray*}
where $R_k(P_0)$ is a polynomial of degree $\le 3k$ in the rescaled variables.

Since furthermore $L_K$ is homogeneous of degree $-1$ in the rescaled variables, while $H^{(\ell)}_\nu$ is homogeneous of degree
$2$ and $R_K^{(3)}$ is linear in them, applying (\ref{eqn:L_varphi}) with $\varphi=e^{iH^{(\ell)}_\nu}\, r^{2d}\cdot  
P_k$, we conclude that the general
term in (\ref{eqn:I_vu_k_expanded}) is a linear combination of terms of the form
$$
\lambda^{-1-(k+j)/2}\, e^{i\sqrt{\lambda}\,\theta_1/\varsigma_T(x)+i\,H^{(\ell)}_\nu\big(P_0^{(\ell)}(\nu)\big)}\,
P_{a,b}(\theta_1,\mathbf{w}_1,\mathbf{w}_2,\nu_M(m)),
$$
where $P_{a,b}$ 
has parity $-a+b+k\equiv a-b+k=j+k$.  

In view of the last summand in (\ref{eqn:defn_L_K}) and of (\ref{eqn:amplitude_nu}), 
where $a-b=j$, $2a\ge 3b$, and $P_{a,b}(P_0)$ is now a polynomial in $(\nu,\theta_1,\mathbf{w}_1,\mathbf{w}_2)$ 
of joint degree
$\le 3k+3a+b\le 11\,(k+j)$. Thus
\begin{eqnarray*}
\lefteqn{\left|\lambda^{-1-(k+j)/2}\, e^{i\sqrt{\lambda}\,\theta_1/\varsigma_T(x)+i\,H^{(\ell)}_\nu\big(P_0^{(\ell)}(\nu)\big)}\,
P_{a,b}(\theta_1,\mathbf{w}_1,\mathbf{w}_2,\nu_M(m))\right|}\\
&\le& C_{k,j}\,\lambda^{-1-\frac{1}{2}\,(k+j)}\, \lambda^{\frac{11}{24}\,(k+j)}
=C_{k,j}\,\lambda^{-1-\frac{1}{24}\,(k+j)}.
\end{eqnarray*}

Since integration in $\mathrm{d}\nu$ is on a domain of radius $O\left(\lambda^{1/24}\right)$
the expansion may be integrated term by term.
Therefore, we obtain for (\ref{eqn:S_FIO_4_lambda_rescaled_bis}) an asymptotic expansion 
\begin{eqnarray}
 \label{eqn:S_asymptotically_expanded}
\lefteqn{S_{\chi\cdot e^{-i\lambda(\cdot)}}^{(\varpi)}\left(x_{1\lambda},x_{2\lambda}\right)^{(\ell)}}\\
&\sim&\frac{(2\pi)}{\pi^d}\,\lambda^{d-e/2}\,\varsigma_T(x)^{-(1+d)}\,d_\varpi\,
\overline{\chi_\varpi(g_\ell)}\,e^{i\sqrt{\lambda}\,\theta_1/\varsigma_T(x)
}\nonumber\\
&&\cdot\sum_{k\ge 0}\lambda^{-k/2}\int _{\mathbb{R}^e}\,\varrho_\lambda(\nu)\,
e^{i\,H^{(\ell)}_\nu\big(P_0^{(\ell)}(\nu)\big)}\,
Q_k\big(\theta_1,\mathbf{w}_1,\mathbf{w}_2,\nu\big)\,\mathrm{d}\nu\nonumber
\end{eqnarray}
for certain polynomials $Q_k$, of degree $\le 11\,k$ and parity $k$; we have $Q_0=1$.
Furthermore, $\varrho_\lambda(\nu)$ is again a bump
function
supported in an expanding ball of radius $O\left(\lambda^{1/24}\right)$.

Let us consider the Gaussian integrals in (\ref{eqn:S_asymptotically_expanded}).
First of all, for $k=0$ we have
\begin{eqnarray}
 \label{eqn:leading_gaussian_integral}
\lefteqn{\int _{\mathbb{R}^e}\,\varrho_\lambda(\nu)\,
e^{i\,H^{(\ell)}_\nu\big(P_0^{(\ell)}(\nu)\big)}\,
\mathrm{d}\nu\sim \int _{\mathbb{R}^e}\,
e^{i\,H^{(\ell)}_\nu\big(P_0^{(\ell)}(\nu)\big)}\,
\mathrm{d}\nu=e^{\mathcal{T}_\ell(\mathbf{w}_1,\mathbf{w}_2)} }\\
&&\cdot
\int_{\mathbb{R}^e}\exp\left(\frac{1}{\varsigma_T(x)}\,\left[
i\,\omega_0\big(\nu_M(m),\mathbf{w}_{1\mathrm{t}}+A_\ell\mathbf{w}_{2\mathrm{t}}\big)
-\frac 12\,\Big\|\nu_M(m)\Big\|^2\right]\right)\,\mathrm{d}\nu\nonumber.
\end{eqnarray}

Recall from the discussion preceding (\ref{eqn:identification_moment})
that $\nu\in \mathbb{R}^e$ represents the coordinates of $\xi\in \mathfrak{g}$ with respect to a chosen
orthonormal basis $(\xi_j)$ of $\mathfrak{g}$. If $C_m$ is as in \S \ref{sctn:effective_volumes}, we 
have
\begin{eqnarray}
 \label{eqn:exponent_rewritten}
\lefteqn{\frac{1}{\varsigma_T(x)}\,\left[i\,\omega_0\big(\nu_M(m),\mathbf{w}_{1\mathrm{t}}+A_\ell\mathbf{w}_{2\mathrm{t}}\big)
-\frac 12\,\Big\|\nu_M(m)\Big\|^2\right]}\\
&=&\frac{1}{\varsigma_T(x)}\,\left[-i\,\nu^t\,C^t\,J_0\,\big(\mathbf{w}_{1\mathrm{t}}+A_\ell\mathbf{w}_{2\mathrm{t}}\big)
-\frac 12\,\left\langle C\,\nu,C\,\nu\right\rangle\right]\nonumber\\
&=&-i\,\mathbf{a}^t\,J_0\,\frac{\big(\mathbf{w}_{1\mathrm{t}}+A_\ell\mathbf{w}_{2\mathrm{t}}\big)}{\sqrt{\varsigma_T(x)}}
-\frac 12\,\left\langle \mathbf{a},\mathbf{a}\right\rangle
\end{eqnarray}
where $J_0$ is the matrix of the standard complex structure, and in the latter line we have set
$\mathbf{a}=:C\,\nu/\sqrt{\varsigma_T(X)}$.

We then obtain, in view of (\ref{eqn:orbital_volume}),
\begin{eqnarray}
 \label{eqn:gaussian_integral}
\lefteqn{\int_{\mathbb{R}^e}\exp\left(\frac{1}{\varsigma_T(x)}\,\left[
i\,\omega_0\big(\nu_M(m),\mathbf{w}_{1\mathrm{t}}+A_\ell\mathbf{w}_{2\mathrm{t}}\big)
-\frac 12\,\Big\|\nu_M(m)\Big\|^2\right]\right)\,\mathrm{d}\nu}\\
&=&\varsigma_T(x)^{e/2}\,\dfrac{1}{\det(C_m)}\,\int_{\mathbb{R}^e}
\exp\left(-i\,\mathbf{a}^t\,J_0\,\frac{\big(\mathbf{w}_{1\mathrm{t}}+A_\ell\mathbf{w}_{2\mathrm{t}}\big)}{\sqrt{\varsigma_T(x)}}
-\frac 12\,\| \mathbf{a}\|^2\right)
\,\mathrm{d}\mathbf{a}\nonumber\\
&=&(2\pi)^{e/2}\,\varsigma_T(x)^{e/2}\,\dfrac{1}{|G^M_m|\,V_{\mathrm{eff}}^M(m)}\,\exp\left(-\frac{1}{2\,\varsigma_T(x)}\,
\|\mathbf{w}_{1\mathrm{t}}+A_\ell\mathbf{w}_{2\mathrm{t}}\|^2\right).\nonumber
\end{eqnarray}

Now
\begin{eqnarray}
 \label{eqn:total_exponent}
\lefteqn{\mathcal{T}_\ell(\mathbf{w}_1,\mathbf{w}_2)-\frac{1}{2\,\varsigma_T(x)}\,
\|\mathbf{w}_{1\mathrm{t}}+A_\ell\mathbf{w}_{2\mathrm{t}}\|^2}\\
&=&\frac{1}{\varsigma_T(x)}\,\left\{i \Big[\omega_0\big(\mathbf{w}_{1\mathrm{v}},\mathbf{w}_{1\mathrm{t}}\big)
-\omega_0\big(\mathbf{w}_{2\mathrm{v}},\mathbf{w}_{2\mathrm{t}}\big)\Big]+ 
\psi_2\big(\mathbf{w}_{1\mathrm{h}},A_\ell\mathbf{w}_{2\mathrm{h}}\big)\right.\nonumber\\
&&\left.-\big\|\mathbf{w}_{1\mathrm{t}}\big\|^2 -
\|A_\ell\mathbf{w}_{2\mathrm{t}}\|^2\right\}=
Q^T_{\mathrm{vt}}(\mathbf{w}_1,\mathbf{w}_2) 
+Q^T_\mathrm{h}(\mathbf{w}_1,A_\ell \mathbf{w}_2), \nonumber
\end{eqnarray}
since $\|A_\ell\mathbf{w}_{2\mathrm{t}}\|=\|\mathbf{w}_{2\mathrm{t}}\|$ by the unitarity of
$A_\ell$.

Thus the leading term in (\ref{eqn:S_asymptotically_expanded}) is
\begin{eqnarray}
 \label{eqn:leading_term_S}
\lefteqn{
\frac{(2\pi)^{1+e/2}}{\pi^d}\,\lambda^{d-e/2}\,\,\dfrac{1}{|G^M_m|\,V_{\mathrm{eff}}^M(m)}\,
d_\varpi\,\overline{\chi_\varpi(g_\ell)}
}\\
&&\cdot \varsigma_T(x)^{e/2-(1+d)}\,\exp\left(i\sqrt{\lambda}\,\frac{\theta_1}{\varsigma_T(x)}+Q^T_{\mathrm{vt}}(\mathbf{w}_1,\mathbf{w}_2) 
+Q^T_\mathrm{h}(\mathbf{w}_1,A_\ell \mathbf{w}_2)\right).\nonumber
%}
\end{eqnarray}

More generally, 
let us write every $Q_k\big(\theta_1,\mathbf{w}_1,\mathbf{w}_2,\nu\big)$ in 
(\ref{eqn:S_asymptotically_expanded}) as a polynomial in $\nu$ with coefficients in
$\mathbb{C}[\theta_1,\mathbf{w}_1,\mathbf{w}_2]$.
If $\mathbf{a}^I$ is a mononomial of degree $|I|\ge 0$, then
\begin{eqnarray*}
 \int_{\mathbb{R}^e}\,\mathbf{a}^I\,
\exp\left(-i\,\mathbf{a}^t\,\mathbf{x}
-\frac 12\,\| \mathbf{a}\|^2\right)
\,\mathrm{d}\mathbf{a}=
P_I(\mathbf{x})\,e^{-\frac 12\,\|\mathbf{x}\|^2},
\end{eqnarray*}
where $P_I$ is a polynomial of degree $|I|$, in general non-homogeneous but of the same parity
as $|I|$. Therefore, the $k$-th term
in (\ref{eqn:S_asymptotically_expanded}) has the form
\begin{eqnarray}
 \label{eqn:k-th_term_S}
\lefteqn{
2\pi\cdot 2^{e/2}\,
\dfrac{1}{|G^M_m|\,V_{\mathrm{eff}}^M(m)}\,d_\varpi\,\overline{\chi_\varpi(g_\ell)}
\,\varsigma_T(x)^{-(1+d-e/2)}\,\left(\frac{\lambda}{\pi}\right)^{d-e/2}\,\lambda^{-k/2}}\\
&&\cdot R_k(\theta_1,\mathbf{w}_1,\mathbf{w}_2)\,\exp\left(i\sqrt{\lambda}\,\frac{\theta_1}{\varsigma_T(x)}+Q^T_{\mathrm{vt}}(\mathbf{w}_1,\mathbf{w}_2) 
+Q^T_\mathrm{h}(\mathbf{w}_1,A_\ell \mathbf{w}_2)\right),\nonumber
%}
\end{eqnarray}
where $R_k$ is a polynomial of degree $k$ and parity $k$.
Theorem \ref{thm:local_scaling_asymtpotics} now follows by Remark \ref{rem:horizontality_effective}.
\end{proof}

\section{Proof of Corollary \ref{cor:trace_asymptotics}}

\begin{proof}
 Locally near any given $x\in X$, a HLC system centered at $x$ 
 may be deformed smoothly with $x$. More precisely,
 there exists an open neighborhood $Y\subseteq X$ of $x$ and a $\mathcal{C}^\infty$
 map 
 $$
 \Gamma:(y,\theta,\mathbf{v})\in Y\times (-\pi,\pi)\times B_{2d}(\mathbf{0},\delta)\mapsto
 \gamma_y(\theta,\mathbf{v})=y+(\theta,\mathbf{v}), 
 $$
 where $\gamma_y$ is a HLC system on $X$ centered at $y$. It may be assumed that $Y=\pi^{-1}(N)$, where
 $N\subseteq M$ is an open neighborhood of $m=:\pi(x)$, and that for any $y\in X_1$ we have
 $\big(y+(\theta,0)\big)+(\vartheta,\mathbf{v})=y+(\theta+\vartheta,\mathbf{v})$, when both sides are defined.
 
We may then find a finite open cover $\{M'_j\}$ of $M'$ such that, setting $X'_j=:\pi^{-1}\big(M'_j\big)$,
we have maps $\Gamma_j:X'_j\times (-\pi,\pi)\times B_{2d}(\mathbf{0},\delta)\rightarrow X$ as above.
For any $x\in X'_j$, recall the unitary isomorphism of $\mathbf{R}^e$ with the summand $\mathbb{R}^{e}_{\mathrm{ver}}$
in (\ref{eqn:explicit_direct_sum_decomposition}), which associated to any $\mathbf{w}\in \mathbf{R}^e$
a transverse vector $\mathbf{w}_\mathrm{t}\in \mathbb{R}^{e}_{\mathrm{ver}}\cong T_mM_{\mathrm{trasv}}$.

We obtain $S^1$-equivariant maps $\varLambda_j:X'_j\times B_{e}(\mathbf{0},\delta)\rightarrow X$, given by
\begin{equation}
 \label{eqn:local_diffeo_transverse}
 \varLambda_j(x,\mathbf{w})=:x+\mathbf{w}_\mathrm{t},
\end{equation}
and it is easily seen that these maps are local diffeomorphisms, and actually diffeomorphisms onto their images
$X_j=:\varLambda_j\big(X'_j\times B_{e}(\mathbf{0},\delta)\big)$
if $\delta>0$ is small enough. By the basic properties of HLC, they are furthermore
(point-wise) local isometries along $X'_j$, meaning that the pull back of the volume form
is
\begin{equation}
 \label{eqn:pull_back_volume_form}
 \varLambda_j^*(\mathrm{d}V_{X})=\mathcal{E}_j(x,\mathbf{w})\,\mathrm{d}\mathbf{w}\,\mathrm{d}V_{X'},
\end{equation}
where $\mathrm{d}V_{X'}$ is the volume form on $X'$ for the induced Riemannian structure (and orientation),
and $\mathcal{E}_j(x,\mathbf{0})=1$ identically.

Thus $\widetilde{X}=:\bigcup_j X_j$ is an open neighborhood of $X'$, 
$(X_j)$ is an  open cover of $\widetilde{X}$, and each $X_j$ is $S^1$-invariant. 
Furthermore, $(X'_j)$ is an open cover of $X'$, where $X'_j=:X_j\cap X'$.

Let $(\varrho_j)$ be a partition of unity on $\widetilde{X}$
subordinate to $(X_j)$; we may assume without loss that $\varrho_j$ is $S^1$-invariant,
hence the pull-back of a $\mathcal{C}^\infty$-function on $M$, that we shall still denote by $\varrho_j$. 
We may assume without loss that
$\varrho_j(x+\mathbf{w}_\mathrm{t})=\varrho_j(x-\mathbf{w}_\mathrm{t})$.

Also, let $\varrho\in \mathcal{C}^\infty_0\big(\widetilde{X}\big)$
be identically equal to $1$ on a small tubular neighborhood of $X'$. 

Then we have
\begin{eqnarray}
 \label{eqn:trace_computation}
 \lefteqn{\mathrm{trace}\left(S_{\chi\cdot e^{-i\lambda(\cdot)}}^{(\varpi)}\right)=
 \int_X\,S_{\chi\cdot e^{-i\lambda(\cdot)}}^{(\varpi)}(y,y)\,\mathrm{d}V_X(Y)}\\
 &\sim&\int_{\widetilde{X}}\varrho(y)\,S_{\chi\cdot e^{-i\lambda(\cdot)}}^{(\varpi)}(y,y)\,\mathrm{d}V_X(y)=
 \sum_j\int_{X_j}\varrho(y)\,\varrho_j(y)\,S_{\chi\cdot e^{-i\lambda(\cdot)}}^{(\varpi)}(y,y)\,\mathrm{d}V_X(y)
 \nonumber\\
 &=&\sum_j\int_{X_j'}\int_{B_{e}(\mathbf{0},\delta)}\varrho(x+\mathbf{w}_t)\,
 \varrho_j(x+\mathbf{w}_t)\,S_{\chi\cdot e^{-i\lambda(\cdot)}}^{(\varpi)}(x+\mathbf{w}_t,x+\mathbf{w}_t)\nonumber\\
&&\cdot \mathcal{E}_j(x,\mathbf{w})\,\mathrm{d}\mathbf{w}\,\mathrm{d}V_{X'}(x)\nonumber\\
&=&\lambda^{-e/2}\,\sum_j\int_{X'_j}\int_{B_{e}(\mathbf{0},\delta)}\,
\varrho\left(x+\frac{\mathbf{w}_t}{\sqrt{\lambda}}\right)\,
 \varrho_j\left(x+\frac{\mathbf{w}_t}{\sqrt{\lambda}}\right)\nonumber\\
&&\cdot 
S_{\chi\cdot e^{-i\lambda(\cdot)}}^{(\varpi)}\left(x+\frac{\mathbf{w}_t}{\sqrt{\lambda}},x+\frac{\mathbf{w}_t}{\sqrt{\lambda}}\right)\,
\mathcal{E}_j\left(x,\dfrac{\mathbf{w}}{\sqrt{\lambda}}\right)\,\mathrm{d}\mathbf{w}\,\mathrm{d}V_{X'}(x),\nonumber
\end{eqnarray}
where we have performed the change of coordinates $\mathbf{w}\mapsto \mathbf{w}/\sqrt{\lambda}$.

We can now make use of the local asymptotic expansion from Corollary \ref{cor:local_weyl_law}.
Using that
\begin{equation*}
 \int_{\mathbb{R}^e}\exp\left(-\frac{2}{\varsigma_T(x)}\,\|\mathbf{w}\|^2\right)\,\mathrm{d}\mathbf{w}=
\left(\frac{\pi}{2}\right)^{e/2}\,\varsigma_T(x)^{e/2},
\end{equation*}
and in view of the parity statement on the $F_j$'s, we get
\begin{eqnarray*}
 %\lefteqn{
\mathrm{trace}\left(S_{\chi\cdot e^{-i\lambda(\cdot)}}^{(\varpi)}\right)
\sim
2\pi\cdot \dim(V_\varpi)\,\left(\frac{\lambda}{\pi}\right)^{d-e}\cdot \int_{X'}F(\lambda,x)\,\mathrm{d}V_{X'}(x),
\end{eqnarray*}
where $F(\lambda,\cdot):X'\rightarrow \mathbb{R}$ has an asymptotic expansion
$$
F(\lambda,x)\sim
\frac{a_{\Phi,\varpi}(m)}{V^X_{\mathrm{eff}}(m)}\cdot\varsigma_T(x)^{-(d+1-e)}\cdot\left[1+\sum_{j\ge 1}\lambda^{-j}\,\beta_j(x)\right];
$$
here $a_{\Phi,\varpi}$ is as in Definition \ref{defn:a_varpi_Phi}.
We conclude
\begin{eqnarray*}
\mathrm{trace}\left(S_{\chi\cdot e^{-i\lambda(\cdot)}}^{(\varpi)}\right)
&\sim&
2\pi\cdot \dim(V_\varpi)\,\left(\frac{\lambda}{\pi}\right)^{d-e}\,a_{\mathrm{gen}}(\Phi,\varpi)\,\Gamma (\Phi,\varpi)\\
&&\cdot \left[1+\sum_{j\ge 0}\lambda^{-j}\,E_j\right],
\end{eqnarray*}
where
$$
\Gamma (\Phi,\varpi)=:\int_{X'}\frac{1}{V^X_{\mathrm{eff}}(m)}\,\varsigma_T(x)^{-(d+1-e)}\,\mathrm{d}V_{X'}(x).
$$
\end{proof}

\section{Proof of Corollary \ref{cor:weyl_law}}

\begin{proof}
As in the proof of Theorem 1.1 of \cite{p_weyl}, we shall adapt the classical Tauberian argument in \S 12 of \cite{gr_sj}. 
Given Corollary \ref{cor:trace_asymptotics}, we have that  for $\lambda\gg 0$
$$N_T^{(\varpi)}(\lambda+1)-N_T^{(\varpi)}(\lambda)\le C\,\lambda^{d-e},$$ and therefore for any $\tau$ and
$\lambda>1$
$$
\big | N_T^{(\varpi)}(\lambda-\tau)-N_T^{(\varpi)}(\lambda)\big|\le C'\,\big(1+|\tau|\big)^{d-e+1}\,\lambda^{d-e}.
$$
Hence
\begin{equation}
 \label{eqn:key_weyl_estimate}
\int_{-\infty}^{+\infty}\left[N_T^{(\varpi)}(\lambda-\tau)-N_T^{(\varpi)}(\lambda)\right]\,\chi(\tau)\,\mathrm{d}\tau
=O\left(\lambda^{d-e}\right).
\end{equation}

Let us consider the spectral measure $\mathrm{d}\mathcal{T}^{(\varpi)}=\sum_j\delta_{\lambda_j^{(\varpi)}}$ on $\mathbb{R}$, 
where $\delta_a$ is Dirac's delta at $a\in \mathbb{R}$.
Thus
$$
N_T^{(\varpi)}(\lambda)=\int_{-\infty}^\lambda \,\mathrm{d}\mathcal{T}^{(\varpi)}(\eta),\,\,\,\,\,\,\,\,\,\,\,\,\,\,
\sum_j\widehat{\chi}\big(\lambda-\lambda_j^{(\varpi)}\big)
=\int_{-\infty}^{+\infty}\,\widehat{\chi}(\lambda-\eta)\,\mathrm{d}\mathcal{T}^{(\varpi)}(\eta).
$$

Now we set $G(\lambda)=:\int_{-\infty}^\lambda \widehat{\chi}(\tau)\,\mathrm{d}\tau$, and compute 
$\int_{-\infty}^{+\infty}\,G(\lambda-\eta)\,\mathrm{d}\mathcal{T}^{(\varpi)}(\eta)$ in two different manners.
 On the one hand, by change of variable and the Tonelli-Fubini Theorem,
\begin{eqnarray}
 \label{eqn:1st_manner}
\lefteqn{\int_{-\infty}^{+\infty}\,G(\lambda-\eta)\,\mathrm{d}\mathcal{T}^{(\varpi)}(\eta)}\\
&=& \int_{-\infty}^{+\infty}\,\left[\int_{-\infty}^{\lambda-\eta}\widehat{\chi}(\tau)\,\mathrm{d}\tau\right]\,\mathrm{d}\mathcal{T}^{(\varpi)}(\eta)
= \int_{-\infty}^{+\infty}\,\left[\int_{-\infty}^{\lambda}\widehat{\chi}(\tau-\eta)\,\mathrm{d}\tau\right]\,\mathrm{d}\mathcal{T}^{(\varpi)}(\eta)\nonumber\\
&=&\int_{-\infty}^{\lambda} \,\left[\int_{-\infty}^{+\infty}\,\widehat{\chi}(\tau-\eta)\,\mathrm{d}\mathcal{T}^{(\varpi)}(\eta)\right]
\, \mathrm{d}\tau=\int_{-\infty}^{\lambda} \,\sum_j \widehat{\chi}\big(\tau-\lambda_j^{(\varpi)}\big)\,
\mathrm{d}\tau  \nonumber\\
&=&\int_{-\infty}^{\lambda} \,\mathrm{trace}\left(S_{\chi\cdot e^{-i\tau(\cdot)}}^{(\varpi)}\right)\,
\mathrm{d}\tau\nonumber\\
&=& \frac{2\pi^2}{d-e+1}\cdot \dim(V_\varpi)\,a_{\mathrm{gen}}(\Phi,\varpi)\,\Gamma (\Phi,\varpi)\, \left(\frac{\lambda}{\pi}\right)^{d-e+1}
+O\left(\lambda^{d-e}\right).\nonumber
\end{eqnarray}

On the other hand, letting $H$ denote the Heavyside function, we also have:
\begin{eqnarray}
\label{eqn:2nd_manner}
 \lefteqn{\int_{-\infty}^{+\infty}\,G(\lambda-\eta)\,\mathrm{d}\mathcal{T}^{(\varpi)}(\eta)
 =\int_{-\infty}^{+\infty}\,\left[\int_{-\infty}^{\lambda-\eta} 
 \widehat{\chi}(\tau)\,\mathrm{d}\tau\right]\,\mathrm{d}\mathcal{T}^{(\varpi)}(\eta)}\nonumber\\
 &=&
 \int_{-\infty}^{+\infty}\,\left[\int_{-\infty}^{+\infty}H(\lambda-\eta-\tau)\, 
 \widehat{\chi}(\tau)\,\mathrm{d}\tau\right]\,\mathrm{d}\mathcal{T}^{(\varpi)}(\eta)\nonumber\\
  &=&
 \int_{-\infty}^{+\infty}\,\left[\int_{-\infty}^{+\infty}H(\lambda-\eta-\tau)\, 
 \mathrm{d}\mathcal{T}^{(\varpi)}(\eta)\right]\,\widehat{\chi}(\tau)\,\mathrm{d}\tau\nonumber\\
 &=&\int_{-\infty}^{+\infty}\,N_T^{(\varpi)}(\lambda-\tau)\,\widehat{\chi}(\tau)\,\mathrm{d}\tau\nonumber\\
  &=&N_T^{(\varpi)}(\lambda)\,\int_{-\infty}^{+\infty}\,\widehat{\chi}(\tau)\,\mathrm{d}\tau+
 \int_{-\infty}^{+\infty}\,\left[N_T^{(\varpi)}(\lambda-\tau)-N_T^{(\varpi)}(\lambda)\right]
 \,\widehat{\chi}(\tau)\,\mathrm{d}\tau \nonumber\\
  &=&2\pi\,N_T^{(\varpi)}(\lambda)+
 O\left(\lambda^{d-e}\right),
\end{eqnarray}
where in the last line we have used (\ref{eqn:key_weyl_estimate}) and that
$$
1=\chi(0)=\frac{1}{2\pi}\,\int_{-\infty}^{+\infty}\widehat{\chi}(\tau)\,\mathrm{d}\tau
$$
by our choice of $\chi$ and the Fourier inversion formula.
Comparing (\ref{eqn:1st_manner}) and (\ref{eqn:2nd_manner}), we conclude that
$$
N_T^{(\varpi)}(\lambda)=
\frac{\pi}{d-e+1}\cdot \dim(V_\varpi)\,a_{\mathrm{gen}}(\Phi,\varpi)\,\Gamma (\Phi,\varpi)\, 
\left(\frac{\lambda}{\pi}\right)^{d-e+1}
+O\left(\lambda^{d-e}\right),
$$
as claimed.

\end{proof}

\section{Proof of Corollary \ref{cor:weyl_law_volume}}

\begin{proof}
By first integrating along the orbits of $\widetilde{\mu}$
 on $X'$
 (that is, the fibers of the projection $X'\rightarrow \widehat{X}$) and then on the base $\widehat{X}$, we
 have in view of (\ref{eqn:Gamma_Phi_T}):
 \begin{equation}
  \label{eqn:volume_quotient}
  \Gamma (\Phi,\varpi)=\int_{\widehat{X}}\,
  \widehat{\varsigma}_T\big(\widehat{x}\big)^{-(d-e+1)}\,\mathrm{d}V_{\widehat{X}}\big(\widehat{x}\big).
 \end{equation}
Furthermore, we have
$$
\widehat{\Sigma}=:\left\{\left(\widehat{x},r\,\widehat{\alpha}_{\widehat{x}}\right)\,:\,
\widehat{x}\in \widehat{X},\,r>0\right\},
$$
and the symplectic form on $\widehat{\Sigma}$ is given by the analogue of (\ref{eqn:symplectic_Sigma}):
$$
\omega_{\widehat{\Sigma}}=\mathrm{d}\left(r\,\widehat{\alpha}\right)
=\mathrm{d}r\wedge \widehat{\alpha}+r\,\mathrm{d}\widehat{\alpha}=
\mathrm{d}r\wedge \widehat{\alpha}+2r\,\widehat{\omega},
$$
where we omit the pull-back symbols in front of $\widehat{\alpha}$ and $\widehat{\omega}$.

Since $\widehat{\Sigma}$ has dimension $2\,(d-e+1)$, the symplectic volume form is
\begin{eqnarray*}
\mathrm{d}V_{\widehat{\Sigma}}&=& \frac{1}{(d-e+1)!}\,\omega_{\widehat{\Sigma}}^{\wedge (d-e+1)}\\
&=&
(2\,r)^{d-e}\,\frac{1}{(d-e)!}\,\omega_{\widehat{M}}^{\wedge (d-e)}\wedge \mathrm{d}r\wedge \widehat{\alpha}
=(2\,r)^{d-e}\,\mathrm{d}V_{\widehat{M}}\wedge \mathrm{d}r\wedge \widehat{\alpha}\\
&=&-2^{d-e+1}\,\pi\,r^{d-e}\,\mathrm{d}V_{\widehat{X}}\wedge \mathrm{d}r.
\end{eqnarray*}

Since $\widehat{\sigma}_T=r\,\widehat{\varsigma}_T$,
$$
\widehat{\Sigma}_1=\left\{\left(\widehat{x},r\,\widehat{\alpha}_{\widehat{x}}\right)\in \widehat{\Sigma}\,:\,
\widehat{x}\in \widehat{X},\,r<1/\widehat{\varsigma_T}\big(\widehat{x}\big)\right\}
$$
and so in view of (\ref{eqn:volume_quotient}) its symplectic volume is
\begin{eqnarray*}
\mathrm{vol}\left(\widehat{\Sigma}_1\right)&=&2^{d-e+1}\pi\,\int_{\widehat{X}}
\left[\int_0^{1/\widehat{\varsigma_T}\big(\widehat{x}\big)}\,r^{d-e}\,\mathrm{d}r\right]\,\mathrm{d}V_{\widehat{X}}\\
&=&\frac{2^{d-e+1}\pi}{d-e+1}\,\int_{\widehat{X}}\,\widehat{\varsigma_T}\left(\widehat{x}\right)^{-(d-e+1)}
\,\mathrm{d}V_{\widehat{X}}=
\frac{2^{d-e+1}\pi}{d-e+1}\,\Gamma (\Phi,\varpi).
\end{eqnarray*}
Inserting this in the estimate of Corollary \ref{cor:weyl_law} we obtain
$$
N_T^{(\varpi)}(\lambda)=
\dim(V_\varpi)\,a_{\mathrm{gen}}(\Phi,\varpi)\,\mathrm{vol}\left(\widehat{\Sigma}_1\right)\, 
\left(\frac{\lambda}{2\,\pi}\right)^{d-e+1}
+O\left(\lambda^{d-e}\right),
$$
as claimed, since $a_{\mathrm{gen}}(\Phi,\varpi)=\dim(V_\varpi)$ in this case.
\end{proof}

\end{document}